\documentclass[11pt,letterpaper]{amsart}
\usepackage{amsmath,amsthm,amssymb,bbm,enumerate,mathrsfs,mathtools}
\usepackage{fullpage}
\usepackage{tikz,xcolor,verbatim}
\usetikzlibrary{calc,shapes}
\usetikzlibrary{arrows}
\usetikzlibrary{hobby}
\usepackage{hyperref}
\usepackage{makecell}
\usepackage[numbers,sort&compress]{natbib}
\usepackage{xspace}
\usepackage{longtable}
\usepackage{dsfont}

\usepackage[normalem]{ulem}

%
%
\tikzset{
  bigblue/.style={circle, draw=blue!80,fill=blue!40,thick, inner sep=1.5pt, minimum size=5mm},
  bigred/.style={circle, draw=red!80,fill=red!40,thick, inner sep=1.5pt, minimum size=5mm},
  bigblack/.style={circle, draw=black!100,fill=black!40,thick, inner sep=1.5pt, minimum size=5mm},
  bluevertex/.style={circle, draw=blue!100,fill=blue!100,thick, inner sep=0pt, minimum size=2mm},
  redvertex/.style={circle, draw=red!100,fill=red!100,thick, inner sep=0pt, minimum size=2mm},
  blackvertex/.style={circle, draw=black!100,fill=black!100,thick, inner sep=0pt, minimum size=2mm},  
  whitevertex/.style={circle, draw=black!100,fill=white!100,thick, inner sep=0pt, minimum size=2mm},  
  smallblack/.style={circle, draw=black!100,fill=black!100,thick, inner sep=0pt, minimum size=1.3mm},
    microvert/.style={circle, draw=black!100,fill=black!100, inner sep=0pt, minimum size=0.0001mm},
  smallwhite/.style={circle, draw=black!100,fill=white!100,thick, inner sep=0pt, minimum size=1mm},
  empty/.style={draw=none, fill=none}  
}

\makeatletter
\pgfdeclareshape{myNode}{
  \inheritsavedanchors[from=rectangle] 
  \inheritanchorborder[from=rectangle]
  \inheritanchor[from=rectangle]{center}
  \inheritanchor[from=rectangle]{north}
  \inheritanchor[from=rectangle]{south}
  \inheritanchor[from=rectangle]{west}
  \inheritanchor[from=rectangle]{east}
  \backgroundpath{
    \southwest \pgf@xa=\pgf@x \pgf@ya=\pgf@y
    \northeast \pgf@xb=\pgf@x \pgf@yb=\pgf@y
    \pgfsetcornersarced{\pgfpoint{5pt}{5pt}}
    \pgfpathmoveto{\pgfpoint{\pgf@xa}{\pgf@ya}}
    \pgfpathlineto{\pgfpoint{\pgf@xa}{\pgf@yb}}
    \pgfpathlineto{\pgfpoint{\pgf@xb}{\pgf@yb}}
    \pgfsetcornersarced{\pgfpoint{5pt}{5pt}}
    \pgfpathlineto{\pgfpoint{\pgf@xb}{\pgf@ya}}
    \pgfpathclose
 }
}
\makeatother


\title{Reconfiguring Graph Homomorphisms on the Sphere}

\author{Jae-Baek Lee}
\address[Jae-Baek Lee and Mark Siggers]{College of Natural Sciences, Kyungpook National University, Daegu 702-701, South Korea}
\email{dlwoqor0923@gmail.com, mhsiggers@knu.ac.kr}

\author{Jonathan A. Noel}
\address[Jonathan A. Noel]{Department of Computer Science and DIMAP, University of Warwick, Coventry CV4 7AL.}  
\curraddr{Mathematics Institute and DIMAP, University of Warwick, Coventry CV4 7AL.} 
\email{j.noel@warwick.ac.uk}

\author{Mark Siggers}
              
\thanks{
The first author is supported by the Kyungpook University BK21 Grant. 
This work was initiated while the second author was visiting Kyungpook National University; he would like to thank the first and third authors and the university for their hospitality.
The third author is supported by Korean NRF Basic Science Research Program (2015-R1D1A1A01057653) funded by the Korean government (MEST) and the Kyungpook National University Research Fund. 
}

\date{}


\newtheorem{thm}[equation]{Theorem}
\newtheorem{lem}[equation]{Lemma}

\newtheorem{fact}[equation]{Fact}

\theoremstyle{definition}
\newtheorem{defn}[equation]{Definition}
\newtheorem{obs}[equation]{Observation}

\newtheorem{step}{Step}
\newtheorem*{ack}{Acknowledgements}

\theoremstyle{remark}
\newtheorem{rem}[equation]{Remark}

\newtheoremstyle{case}{}{}{\normalfont}{}{\itshape}{\normalfont:}{ }{}

\theoremstyle{case}

\numberwithin{equation}{section}

\newcommand\Hrec[1]{\textsc{$#1$-Recolouring}}

\newcommand{\Hom}{\operatorname{Hom}}
\newcommand{\Ret}{\operatorname{Ret}}

\newcommand{\bHom}{\operatorname{\mathbf{Hom}}}
\renewcommand\P{\mathcal{P}}


\begin{document}

\begin{abstract}
Given a loop-free graph $H$, the reconfiguration problem for homomorphisms to $H$ (also called $H$-colourings) asks: given two $H$-colourings $f$ of $g$ of a graph $G$, is it possible to transform $f$ into $g$ by a sequence of single-vertex colour changes such that every intermediate mapping is an $H$-colouring? This problem is known to be polynomial-time solvable for a wide variety of graphs $H$ (e.g. all $C_4$-free graphs) but only a handful of hard cases are known. We prove that this problem is PSPACE-complete whenever $H$ is a $K_{2,3}$-free quadrangulation of the $2$-sphere (equivalently, the plane) which is not a $4$-cycle. From this result, we deduce an analogous statement for non-bipartite $K_{2,3}$-free quadrangulations of the projective plane. This include several interesting classes of graphs, such as odd wheels, for which the complexity was known, and $4$-chromatic generalized Mycielski graphs, for which it was not.

If we instead consider graphs $G$ and $H$ with loops on every vertex (i.e. reflexive graphs), then the reconfiguration problem is defined in a similar way except that a vertex can only change its colour to a neighbour of its current colour. In this setting, we use similar ideas to show that the reconfiguration problem for $H$-colourings is PSPACE-complete whenever $H$ is a reflexive $K_{4}$-free triangulation of the $2$-sphere which is not a reflexive triangle. This proof applies more generally to reflexive graphs which, roughly speaking, resemble a triangulation locally around a particular vertex. This provides the first graphs for which \Hrec{H} is known to be PSPACE-complete for reflexive instances.
\end{abstract}
\keywords{graph recolouring, graph reconfiguration, homomorphism complexity, PSPACE Complete}
\subjclass[2010]{05C15, 05C85, 68Q17}  
\maketitle

\vspace{-1em}

\section{Introduction}

All graphs in this paper are assumed to be finite, undirected and without multiple edges, unless otherwise specified. A vertex is said to be \emph{reflexive} if it has a loop and \emph{irreflexive} otherwise. A graph is said to be \emph{reflexive} if all of its vertices are reflexive and \emph{irreflexive} if all of its vertices are irreflexive. A \emph{homomorphism} from a graph $G$ to a graph $H$ is a mapping $f:V(G)\to V(H)$ such that $f(u)f(v)\in E(H)$ for every $uv\in E(G)$. For the sake of brevity, a homomorphism $f$ from $G$ to $H$ will sometimes be referred to as an \emph{$H$-colouring} of $G$ and, for $v\in V(G)$, we call $f(v)$ the \emph{colour} of $v$. We denote the set of all $H$-colourings of a graph $G$ by $\Hom(G,H)$. 

Given an irreflexive graph $H$ and two $H$-colourings $f$ and $g$ of a graph $G$, a \emph{reconfiguration sequence} taking $f$ to $g$ is a sequence $f_0,\dots,f_m\in \Hom(G,H)$ such that  $f_0=f$, $f_m=g$ and $f_i$ differs from $f_{i+1}$ on a unique vertex for $0\leq i\leq m-1$. If there exists a reconfiguration sequence taking $f$ to $g$, then we say that $f$ \emph{reconfigures} to $g$.  We are interested in the complexity of the following decision problem, called \Hrec{H}:
\begin{itemize}
\item[] \textbf{Instance:} A graph $G$ and $f,g\in \Hom(G,H)$.
\item[] \textbf{Question:} Does $f$ reconfigure to $g$?
\end{itemize}
The \Hrec{H} problem is part of a growing area known as ``combinatorial reconfiguration,'' a central focus of which is to determine the complexity of deciding whether a given solution to a combinatorial problem can be transformed into another by applying a sequence of allowed modifications. For further background on combinatorial reconfiguration in general, see~\cite{Demaine,Brooks,SAT,Mohar,KempeFeg,indepCog,Moritz,shortest} and the surveys of van den Heuvel~\cite{JanSurvey}, Ito and Suzuki~\cite{webSurvey} and Nishimura~\cite{Nishimura}.

An interesting special case of the \Hrec{H} problem is when $H$ is a complete graph on $k$ vertices, in which case $H$-colourings are nothing more than proper $k$-colourings. Cereceda, van den Heuvel and Johnson~\cite{3col} showed that  \Hrec{K_3} can be solved in time $O(|V(G)|^2)$. This came as some surprise, given that it is NP-complete to decide whether a graph admits a $K_3$-colouring.  On the other hand, Bonsma and Cereceda~\cite{4col} showed that the complexity jumps drastically for larger cliques: for every fixed $k\geq4$, the \Hrec{K_k} problem is PSPACE-complete.\footnote{Note that it is not hard to see that \Hrec{H} is in PSPACE for every finite graph $H$.} Later, Brewster, McGuinness, Moore and Noel~\cite{circular} extended this dichotomy to the case when $H$ is a ``circular clique.'' 

Wrochna~\cite{Wrochna} developed ideas inspired by algebraic topology to prove the remarkably general result that \Hrec{H} is solvable in polynomial time whenever $H$ does not contain a cycle of length $4$. By further refining his topological approach, Wrochna~\cite{WrochnaHed} (see also~\cite{WrochnaPhD}) proved a ``multiplicativity'' result for graphs without cycles of length $4$ which is closely connected to Hedetniemi's Conjecture~\cite{Hedet}; very recently, Tardif and Wrochna~\cite{TW} have extended these methods beyond the setting of $C_4$-free graphs. 

On the hardness side, however, only a few examples are known. As we have mentioned, the results of~\cite{circular,4col} show that the problem is PSPACE-complete for certain cliques and circular cliques. In addition, Wrochna~\cite{WrochnaMasters} proved that there exists a graph $H$ such that \Hrec{H} is PSPACE-complete even when the instance graph $G$ is just a cycle and Brewster, Lee, Moore, Noel and Siggers~\cite{wheelFrozen} proved that \Hrec{H} is PSPACE-complete if $H$ is an odd wheel; for $k\geq3$, the \emph{wheel} $W_k$ is the graph consisting of an irreflexive cycle of length $k$ and a vertex adjacent to every vertex of the cycle and it is \emph{odd} if $k$ is odd. 

Our goal in this paper is to obtain a rich class of graphs $H$ for which \Hrec{H} is PSPACE-complete. Throughout the paper, a \emph{quadrangulation} is a connected irreflexive graph admitting an embedding in the $2$-sphere (or, equivalently, the plane) in which every face is bounded by four edges. Our main result for irreflexive graphs is the following. 

\begin{thm}
\label{quadThm}
If $H$ is a finite irreflexive quadrangulation not containing $K_{2,3}$ as a subgraph and not isomorphic to the $4$-cycle, then \Hrec{H} is PSPACE-complete. 
\end{thm}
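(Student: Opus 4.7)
The plan is to establish PSPACE-hardness by reducing from a known PSPACE-complete reconfiguration problem, with membership in PSPACE following by the standard observation in the introduction. Because every quadrangulation of $S^{2}$ is bipartite, a direct reduction from $\Hrec{K_k}$ with $k\ge 4$ is unavailable, so the hardness must be extracted from the combinatorial geometry of $H$ itself; I would aim at nondeterministic constraint logic (NCL), or alternatively at the PSPACE-complete recolouring problem for odd wheels from \cite{wheelFrozen}, as the source of hardness.

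First I would prove a local rigidity lemma distilling the consequences of the $K_{2,3}$-free hypothesis. Since any two vertices of $H$ share at most two common neighbours, and sharing exactly two forces them to lie on a common facial 4-cycle of the quadrangulation, the image of any 4-cycle of $G$ under an $H$-colouring is either a degenerate edge of $H$ or a full facial 4-cycle of $H$. Iterating this observation, I would classify images of a combinatorial 2-disc of $G$ (a plane subgraph whose internal faces are 4-cycles) under a homomorphism: such an image should be understood as a ``discrete map'' into $H$ whose single-vertex reconfigurations correspond to flipping one face across an edge to the other face incident to that edge. This is the combinatorial analogue of Wrochna's topological analysis in~\cite{Wrochna}, adapted from the $C_{4}$-free setting to the quadrangulation setting.

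Next I would construct the key gadget: a bistable switch, consisting of a graph $G_S$ with two distinguished adjacent vertices $u,v$, such that after pinning $f(u),f(v)$ to the endpoints of some edge $xy\in E(H)$, the $H$-colourings of $G_S$ extending this partial colouring partition into exactly two internal-reconfiguration classes corresponding to the two faces of $H$ incident to $xy$. This is where I expect the principal difficulty to lie: the gadget must be defined uniformly across every $K_{2,3}$-free quadrangulation $H\not\cong C_4$, and verifying bistability requires showing that no sequence of internal reconfigurations can leak the winding from one incident face to the other, which in turn requires ruling out ``escape routes'' through the rest of the sphere. The hypothesis $H\not\cong C_4$ enters critically here because it guarantees that the two faces incident to $xy$ are not globally identified by a short path, so the two winding states are genuinely distinct.

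With switches available, I would chain them into wires and combine them through AND/OR gadgets, again built from copies of $H$, to encode an arbitrary NCL instance: each NCL edge is represented by a switch, each NCL vertex by a logical combination of its incident switches, and the two target NCL configurations by two explicit $H$-colourings $f,g$ of the assembled graph. Reachability in the NCL instance then translates exactly to reconfigurability of $f$ to $g$ in $\Hrec{H}$, completing the reduction. The main obstacle throughout is the second step: producing, for an arbitrary $K_{2,3}$-free spherical quadrangulation, a bistable switch whose two states survive every possible local move of the global colouring.
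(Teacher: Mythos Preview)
Your proposal contains a foundational misconception and then leaves the genuinely hard steps as acknowledged open problems, so as written it is not a proof.

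First, the premise that ``a direct reduction from $\Hrec{K_k}$ with $k\ge 4$ is unavailable'' because $H$ is bipartite is incorrect. Bipartiteness of $H$ only forces the \emph{instance graph} $G'$ of $\Hrec{H}$ to be bipartite; it places no restriction on which problem you reduce \emph{from}. The paper in fact reduces from $\Hrec{K_4}$: given an instance $(G,f,g)$ of $\Hrec{K_4}$, it builds a new bipartite graph $G'$ (made from copies of $H$ and paths) together with $H$-colourings $f',g'$, and shows $(G,f,g)$ is a yes-instance iff $(G',f',g')$ is. There is no obstruction here.

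Second, even granting your NCL route as a legitimate alternative, the proposal does not construct anything. You correctly identify the bistable switch as ``the principal difficulty'' and then do not build it; the AND/OR gadgets are not described at all. A proof must exhibit these gadgets for an \emph{arbitrary} $K_{2,3}$-free quadrangulation $H\not\cong C_4$ and verify their behaviour under reconfiguration. Your winding/leakage intuition is reasonable heuristics, but it is not an argument.

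Third, your local rigidity lemma is misstated. The image of a $4$-cycle of $G$ under an $H$-colouring need not be a \emph{facial} $4$-cycle of $H$: the $3$-cube $Q_3$ is a $K_{2,3}$-free quadrangulation with non-facial $4$-cycles, and nothing prevents a $4$-cycle of $G$ from landing on one of those. So the ``discrete map'' picture you sketch does not follow from $K_{2,3}$-freeness alone.

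For comparison, the paper avoids all of this by encoding a $K_4$-colouring of $G$ via four bits per vertex (colour $i$ on/off), enforced by two gadget types: a \emph{not-both-one} gadget (for edges of $G$) and a \emph{not-all-zero} gadget (for vertices of $G$). Existence of these gadgets is proved via a directed graph $\Phi$ on ordered ``across'' pairs of $H$; one shows $\Phi$ is Eulerian (every vertex has in- and out-degree $2$, and the underlying graph is connected using a degree-$3$ vertex of $H$ guaranteed by Euler's formula), so any two across-pairs are joined by a directed path, and such a path is exactly what is needed to build the not-both-one gadget. The not-all-zero gadget is then assembled from several not-both-one gadgets around the degree-$3$ vertex. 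This is where the actual work lies, and your proposal has no counterpart to it.
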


To prove Theorem~\ref{quadThm}, we will reduce \Hrec{K_4} to \Hrec{H} and apply the result of Bonsma and Cereceda~\cite{4col} mentioned above. However it will be apparent in the proof that essentially the same approach could have been used to reduce \Hrec{F} to \Hrec{H} for any graph $F$ and so there is nothing particularly special about the choice of $K_4$ (except that \Hrec{K_4} is known to be PSPACE-complete). 

As an application of Theorem~\ref{quadThm}, we will derive an analogous statement for non-bipartite quadrangulations of the projective plane. We are indebted to an anonymous referee who pointed out that one of our original proofs (which was only stated for odd wheels) holds in this generality.
  
\begin{thm}
\label{ppCor}
If $H$ is a non-bipartite quadrangulation of the projective plane not containing $K_{2,3}$ as a subgraph, then \Hrec{H} is PSPACE-complete. Moreover, it remains PSPACE-complete when restricted to instances $(G,f,g)$ such that $G$ is bipartite.   
\end{thm}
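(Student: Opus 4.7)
The plan is to deduce Theorem~\ref{ppCor} from Theorem~\ref{quadThm} via the bipartite double cover construction. Let $H$ be a non-bipartite $K_{2,3}$-free quadrangulation of the projective plane and set $\tilde H := H \times K_2$, the bipartite double cover of $H$, with vertex set $V(H) \times \{0,1\}$ and edge set $\{(u,0)(v,1) : uv \in E(H)\}$.

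The first step is to verify that $\tilde H$ is a $K_{2,3}$-free quadrangulation of the $2$-sphere. For connectedness I would invoke the standard fact that the bipartite double cover of a connected graph is connected if and only if that graph is non-bipartite. For the sphere-embedding property I would appeal to covering-space theory: in any quadrangulation of $\mathbb{RP}^2$, a cycle is odd if and only if it is non-contractible, since every contractible cycle is a $\mathbb{Z}/2$-sum of the (even) face boundaries. Consequently, the $\mathbb{Z}/2$-cohomology class on $H$ encoding the bipartite double cover agrees with the class pulled back from the orientation double cover $S^2 \to \mathbb{RP}^2$, so $\tilde H$ inherits a cellular embedding in $S^2$ in which each $4$-face of $H$ lifts to two $4$-faces. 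The $K_{2,3}$-freeness follows from the fact that the projection $\pi\colon \tilde H \to H$ onto the first coordinate is a graph homomorphism that is injective on closed neighbourhoods: any hypothetical $K_{2,3} \subseteq \tilde H$ has its degree-$3$ side entirely in one class of the bipartition and the other side entirely in the opposite class, so the five vertices project to five distinct vertices in $H$ forming a $K_{2,3}$, contradicting the hypothesis on $H$.

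Since $H$ is non-bipartite we have $|V(H)| \ge 3$ and hence $|V(\tilde H)| \ge 6$, so $\tilde H \ne C_4$, and Theorem~\ref{quadThm} gives that \Hrec{\tilde H} is PSPACE-complete. Because $\tilde H$ is bipartite, every instance graph admitting a $\tilde H$-colouring is itself bipartite, so this hardness already holds when restricted to bipartite inputs.

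Finally I would reduce \Hrec{\tilde H} on bipartite instances to \Hrec{H} on bipartite instances. Given $(G, f, g)$, treat components of $G$ separately and assume $G$ is connected bipartite with bipartition $(X, Y)$. Every $\tilde H$-colouring of $G$ sends $X$ entirely into one of the two bipartition classes of $\tilde H$, and the choice of class is preserved by any single-vertex reconfiguration step; hence if $f$ and $g$ differ on this invariant, the answer is trivially NO. Otherwise, define $f' := \pi \circ f$ and $g' := \pi \circ g$. The assignment $\psi \mapsto \pi \circ \psi$ is a bijection from the set of $\tilde H$-colourings of $G$ sending $X$ to the fixed class onto $\Hom(G, H)$, with inverse obtained by lifting along the bipartition, and this bijection converts single-vertex moves in $\tilde H$ to single-vertex moves in $H$ and vice versa. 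Hence $f$ reconfigures to $g$ in $\tilde H$ if and only if $f'$ reconfigures to $g'$ in $H$, completing the reduction. The main obstacle is the topological identification of the bipartite double cover with the $S^2$-cover; the remaining steps are routine manipulations of covers and bipartitions.
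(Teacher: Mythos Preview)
Your proposal is correct and follows essentially the same route as the paper: pass to the bipartite double cover $\tilde H = H\times K_2$, observe that it is a $K_{2,3}$-free quadrangulation of $S^2$ distinct from $C_4$, apply Theorem~\ref{quadThm}, and then use the standard equivalence between \Hrec{(H\times K_2)} and \Hrec{H} on bipartite instances. The paper packages the last step as a lemma of Wrochna and proves the sphere-embedding fact by directly pulling back the embedding along $q:S^2\to\mathbb{RP}^2$ rather than via $\mathbb Z/2$-cohomology, and it leaves the $K_{2,3}$-freeness and $\tilde H\neq C_4$ checks implicit; your write-up supplies those details explicitly, but the underlying argument is the same.
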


This applies to interesting families of graphs such as odd wheels and $4$-chromatic generalized Mycielski graphs. Therefore, it recovers, and vastly extends, the result of  Brewster et al.~\cite{wheelFrozen} that \Hrec{H} is PSPACE-complete for any odd wheel $H$. The key to the proof of Corollary~\ref{ppCor} is that the ``bipartite double cover'' of a non-bipartite quadrangulation of the projective plane is a quadrangulation of the sphere. If $H$ is an even wheel, then the bipartite double cover is no longer a quadrangulation of the sphere itself, but it can be ``retracted'' to one. Using this, we obtain the following explicit extension of the result for odd cycles from \cite{wheelFrozen}.

\begin{thm}
\label{wheelThm}
For $k \geq 3$ and $k\neq 4$, \Hrec{W_k} is PSPACE-complete.  Moreover, it remains PSPACE-complete when restricted to instances $(G,f,g)$ such that $G$ is bipartite.  
\end{thm}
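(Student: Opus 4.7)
My plan is to handle odd and even $k$ separately, invoking Theorem~\ref{ppCor} for odd $k$ and a retraction argument combined with Theorem~\ref{quadThm} for even $k$. Throughout, let $D_n$ denote the graph obtained from a $2n$-cycle $u_1 u_2 \cdots u_{2n}$ by adding two pole vertices $p,q$ with $p u_i$ an edge for odd $i$ and $q u_i$ an edge for even $i$; a direct check with Euler's formula shows that $D_n$ is a $K_{2,3}$-free quadrangulation of the sphere whenever $n \geq 3$.

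For odd $k \geq 3$, I would verify that $W_k$ itself is a non-bipartite, $K_{2,3}$-free quadrangulation of the projective plane. Non-bipartiteness is immediate from the triangles through the hub; $K_{2,3}$-freeness follows from the fact that any two vertices of $W_k$ have at most two common neighbours (two rim vertices share at most the hub and one rim vertex; the hub and a rim vertex share only the two adjacent rim vertices); and the quadrangulation embedding comes from the well-known fact that the bipartite double cover $W_k \times K_2$ is the sphere quadrangulation $D_k$ (because the odd rim cycle lifts to a single $2k$-cycle), combined with the $2$-to-$1$ antipodal covering of the projective plane by the sphere. Theorem~\ref{ppCor} then yields PSPACE-hardness of $\Hrec{W_k}$ on bipartite instances at once.

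For even $k \geq 6$, $W_k$ is no longer a projective-plane quadrangulation, so I would pass through the bipartite double cover $\widetilde W_k := W_k \times K_2$. A standard observation, which I would spell out carefully, is that for a bipartite instance graph $G$ the problems $\Hrec{W_k}$ and $\Hrec{\widetilde W_k}$ are polynomially equivalent: once a bipartition of $G$ is fixed, every $W_k$-colouring of $G$ lifts uniquely to a bipartition-respecting $\widetilde W_k$-colouring, and single-vertex recolourings correspond step for step. Unpacking the definition, $\widetilde W_k$ consists of two disjoint $k$-cycles $A = a_1 \cdots a_k$ and $B = b_1 \cdots b_k$, together with poles $p, q$, where $p$ is adjacent to $\{a_i : i \text{ even}\} \cup \{b_i : i \text{ odd}\}$ and $q$ is adjacent to the complementary set. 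In particular, the subgraph induced on $V(A) \cup \{p, q\}$ is isomorphic to $D_{k/2}$, which satisfies the hypotheses of Theorem~\ref{quadThm} since $k/2 \geq 3$; hence $\Hrec{D_{k/2}}$ is PSPACE-complete.

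The key step is to exhibit an explicit retraction $r : \widetilde W_k \to D_{k/2}$, which I would define by $r|_{D_{k/2}} = \mathrm{id}$ and $r(b_i) = a_{i+1 \pmod k}$. The shift by one flips parity on the $B$-cycle, which is exactly what is needed so that each edge from $b_i$ to its pole lands on the correct pole of $D_{k/2}$, while the cycle edges of $B$ clearly map to cycle edges of $A$. Given $r$, the reduction $\Hrec{D_{k/2}} \leq_P \Hrec{\widetilde W_k}$ on bipartite instances is the identity map: the forward direction is immediate from the inclusion $D_{k/2} \subseteq \widetilde W_k$, and conversely any $\widetilde W_k$-reconfiguration sequence joining two $D_{k/2}$-colourings composes with $r$ step by step to yield a valid $D_{k/2}$-sequence. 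Combining this with the $\Hrec{W_k} \equiv \Hrec{\widetilde W_k}$ equivalence on bipartite instances completes the proof. The main obstacle I anticipate is correctly identifying the retract $Q$ and confirming it is $K_{2,3}$-free; indeed the analogous choice $D_2$ for $k = 4$ does contain $K_{2,3}$ (the non-adjacent vertices $u_1, u_3$ share the common neighbours $u_2, u_4, p$), which is precisely why $k = 4$ must be excluded from the statement.
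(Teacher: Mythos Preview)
Your proposal is correct and follows essentially the same route as the paper: odd $k$ directly via Theorem~\ref{ppCor}, and even $k\geq 6$ by passing to the bipartite double cover $W_k\times K_2$ (Lemma~\ref{xK2}), retracting it onto the ``double pyramid'' sphere quadrangulation you call $D_{k/2}$ (the paper's $H_k$), and invoking Theorem~\ref{quadThm} together with Lemma~\ref{retractHard}. Your retraction $b_i\mapsto a_{i+1}$ is exactly the paper's map $\varphi(x_i,j)=(x_{i+1},j)$ for $i\not\equiv j\pmod 2$, written in different coordinates.
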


Note that the condition that $k\neq4$ in Theorem~\ref{wheelThm} is necessary unless $\text{P}=\text{PSPACE}$. To see this, observe that the graph $W_4$ contains two pairs of vertices with identical neighbourhoods and identifying both of these pairs ``folds'' $W_4$ to $K_3$. This observation can be used to show that \Hrec{W_4} and \Hrec{K_3} are polynomially equivalent and so the former is solvable in polynomial time by the result of~\cite{3col}. For details on how these ``folding reductions'' work in general, see Wrochna~\cite[Proposition~4.3]{WrochnaMasters}.

Theorem~\ref{quadThm} fits a general theme, which first emerged in the topological approach of Wrochna~\cite{Wrochna}, that the complexity of \Hrec{H} may be closely related to the structure of a topological complex in which vertices, edges and $4$-cycles of $H$ (and, more generally, complete bipartite subgraphs) are faces. That is, Wrochna's result~\cite{Wrochna} says that, if this complex is ``thin'' in the sense that all of its faces are $0$- or $1$-dimensional, then its simple topological structure can be exploited to obtain a polynomial-time algorithm, whereas our  result  says that if this complex has basically the same topology as a $2$-sphere, then the problem is PSPACE-complete. We further discuss the likely connections between \Hrec{H} and the ``topology'' of $H$ in the setting of reflexive graphs $H$, which we discuss next, as the connections are more natural in this setting.

Consider now the reconfiguration problem for $H$-colourings of $G$ where both of the graphs $G$ and $H$ are reflexive, studied previously in~\cite{digraphs}. In this setting, the definition of a reconfiguration sequence is somewhat different; it is defined to be a sequence $f_0,\dots,f_m\in \Hom(G,H)$ such that $f_i$ and $f_{i+1}$ differ on a unique vertex $u_i$ for $0\leq i\leq m-1$ and $f_{i+1}\left(u_i\right)$ is a neighbour of $f_i\left(u_i\right)$. That is, it is the same as the definition for irreflexive graphs, but with an additional restriction that a vertex can only change its colour to a neighbour of its current colour. 

Let us justify this extra condition by redefining the notion of homomorphism  reconfiguration in terms of paths in the well known Hom-graph. For general graphs $G$ and $H$, the \emph{Hom-graph}, denoted $\bHom(G,H)$, is the graph with vertex set $\Hom(G,H)$ in which two homomorphisms $\phi$ and $\psi$ in $\Hom(G,H)$ are adjacent if $\phi(x) \psi(y)\in E(H)$ for every $xy\in E(G)$. The Hom-graph can be viewed as the $2$-skeleton of the Hom-complex which was first introduced by Lov\'{a}sz~\cite{LovaszKneser} in his celebrated proof of the Kneser Conjecture. 

When $G$ and $H$ are irreflexive, it is easily shown that one can reconfigure between two homomorphisms $\phi$ and $\psi$ from $G$ to $H$ if and only if they are in the same component of $\bHom(G,H)$; see, e.g.,~\cite[Proposition 3.2]{BrewNoel} for a proof. However, for reflexive graphs, the presence of a loop on every vertex of $G$ means that, for any two homomorphisms $\phi$ and $\psi$ which are adjacent in $\bHom(G,H)$ and vertex $v\in V(G)$, we must have that $\phi(v)$ and $\psi(v)$ are neighbours in $H$. Using this observation, it is not hard to see that, when $G$ and $H$ are reflexive, there is a path between two $H$-colourings of $G$ if and only if one can be reconfigured to the other with the extra condition mentioned above. This definition in terms of the Hom-graph allows us to define the reconfiguration not only for reflexive graphs, but also for graphs in which loops are allowed, but not required.  Further, defining it this way 
helps preserve connections between homomorphism reconfiguraton and important concepts in graph homomorphism theory, such as the connection to the Hom-complex. 

Basic among these concepts is the fact from Brightwell and Winkler~\cite{BW} that a graph $H$ is ``dismantlable'' if and only if $\bHom(G,H)$ is connected for all $H$.  From this we get that \Hrec{H} is trivial if $H$ is dismantlable.   The  notion of ``folding'' mentioned above can be seen as a irreflexive version of dismantling. Using the same proof as for folding, it is easily shown that if $H$ dismantles to $H'$, then the problems $\Hrec{H}$ and $\Hrec{H'}$ are polynomially equivalent.

We consider such properties to be topological properties as there is a  close connection between $\bHom(G,H)$ being disconnected for various $G$ and non-trivial homotopy in the \emph{clique complex} of $H$---the simplicial complex on the vertices of $H$ whose $k$-simplices are the reflexive $(k+1)$-cliques of $H$.  While $H$ being dismantlable implies that its clique complex deformation retracts to a single vertex, and so has trivial homotopy,  non-trivial homomotopy in the clique complex of $H$ is the main source of disconnectedness in $\bHom(C,H)$ for various cycles $C$ or higher dimensional analogues.  

In \cite{La}, Larose, shows that any non-trivial homotopy in the clique complex of a reflexive graph $H$ yields an NP-complete retraction problem $\Ret(H)$ (which is the usual analogue of the homomorpism problem for reflexive graphs).  Wrochna's result suggests that non-trivial homotopy of dimension $1$ will not suffice to make a hard reconfiguration problem, and indeed, in a forthcoming paper~\cite{LNS2}, we obtain an analogue of Wrochna's result in the reflexive setting by showing that \Hrec{H} is polynomial time solvable if $H$ contains no triangles---that is, if the clique complex has only $0$- and $1$-simplices. To find $H$ for which \Hrec{H} is not polynomial time solvable, it seems that the natural candidates are graphs $H$ with $2$-dimensional holes.  

We believe that any two dimensional hole in the clique complex of a reflexive graph $H$ yields a hard reconfiguration problem. A reflexive graph is called a \emph{triangulation} if the underlying irreflexive graph is connected and can be embedded in the plane so that all faces are bounded by three edges.  Triangulations of a sphere that are not $K_4$ are the simplest examples of simplicial complexes with two dimensional holes. To make an explicit conjecture, we expect for a reflexive graph $H$ that if $\bHom(S,H)$ is disconnected for any reflexive triangulation $S$ of a sphere, then \Hrec{H} is PSPACE-complete.

The following result, our main result for reflexive graphs,  is a step towards resolving this, and to our knowledge,  provides the first examples of graphs for which \Hrec{H} is PSPACE-complete when restricted to reflexive instances.

\begin{thm}
\label{triThm}
If $H$ is a finite reflexive triangulation not containing $K_4$ as a subgraph and not isomorphic to a reflexive triangle, then \Hrec{H} is PSPACE-complete when restricted to instances $(G,f,g)$ such that $G$ is reflexive. 
\end{thm}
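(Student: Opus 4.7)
The plan is to reduce from \Hrec{K_4}---shown to be PSPACE-complete by Bonsma and Cereceda~\cite{4col}---to \Hrec{H} restricted to reflexive instances. Given a \Hrec{K_4} instance $(G,f,g)$, we construct in polynomial time a reflexive graph $G'$ and two $H$-colourings $f',g'\in\Hom(G',H)$ such that $f$ reconfigures to $g$ if and only if $f'$ reconfigures to $g'$. Since membership of \Hrec{H} in PSPACE is automatic, this suffices. The overall structure mirrors the proof of Theorem~\ref{quadThm}, with the reflexive triangulation structure of $H$ playing the role that the irreflexive $K_{2,3}$-free quadrangulation structure played there.

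The first step is to extract useful local structure from $H$. Since $H$ is a triangulation of the sphere, for every vertex $u$ the neighbours of $u$ form a cyclic link $c_0c_1\cdots c_{k-1}$ with $c_ic_{i+1}\in E(H)$. A short combinatorial argument, using the $K_4$-freeness together with the triangulation structure, shows that this link is an \emph{induced} cycle in $H$ of length $k\ge 4$: the case $k=3$ yields $K_4$ on $\{u,c_0,c_1,c_2\}$, and any chord $c_ic_j$ forces a shorter chord in the triangulated region it subtends, eventually producing a $K_4$. Since $H$ is not the reflexive triangle, we fix a vertex $v^*\in V(H)$ whose induced link is a cycle $c_0\cdots c_{k-1}$ of length $k\ge 4$. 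This gives, sitting inside $H$, a reflexive analogue of a wheel $W_k$ (hub $v^*$ and induced rim $C$) together with the surrounding triangulation, from which we build the gadgets.

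The reduction then proceeds by constructing reflexive vertex and edge gadgets whose $H$-colourings encode $K_4$-colourings and their reconfiguration moves. Each $v\in V(G)$ is replaced by a vertex gadget designed so that the admissible $H$-colourings of the gadget project canonically onto a four-element set of \emph{labels}, serving as the four colours of $K_4$; each edge $uv\in E(G)$ is replaced by an edge gadget forcing the labels at its endpoints to differ, mimicking the proper-colouring condition. The colourings $f'$ and $g'$ are obtained by translating $f$ and $g$ under this encoding. The forward direction---each single vertex recolouring in the original $K_4$-reconfiguration lifts to a finite sequence of reflexive neighbour-moves inside the corresponding gadget---follows by an explicit description of those internal moves, using the induced rim $C$ around $v^*$ to shuttle colours around.

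The main obstacle, and the step where the global sphere topology of $H$ enters crucially, is the reverse direction: every $H$-reconfiguration of $G'$ must project to a valid $K_4$-reconfiguration of $G$. The essential point is to engineer a reflexive ``frame'' subgraph $S$ of $G'$ together with a specific homomorphism $S\to H$ which is topologically frozen, i.e.\ lies in a component of $\bHom(S,H)$ separated from its competitors under any relabelling that would change the encoded $K_4$-colouring illegally. The assumption that $H$ is a $K_4$-free sphere triangulation (and not a triangle) is precisely what makes $\pi_2$ of the clique complex of $H$ non-trivial, providing the topological obstruction to such reconfigurations; this is the reflexive analogue of the role non-bipartite quadrangulations play in Theorems~\ref{quadThm} and~\ref{ppCor}. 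Making this topological intuition precise---through a winding-number or degree argument over the clique complex of $H$, packaged as a combinatorial rigidity lemma that rules out all unintended ``escape routes'' in $\bHom(G',H)$---is expected to be the technical heart of the proof.
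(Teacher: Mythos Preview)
Your high-level plan---reduce from \Hrec{K_4}, encode each colour by a bit, and build gadgets enforcing ``at least one bit on'' and ``adjacent bits not both on''---matches the paper. Where you diverge, and where the proposal has a real gap, is in the treatment of rigidity.

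You identify the reverse direction as the crux and propose to establish it via a topological obstruction: non-trivial $\pi_2$ of the clique complex, packaged as a winding-number or degree argument over that complex. This is not carried out, and the paper shows it is not needed. The paper's mechanism is entirely combinatorial and much simpler: since $H$ is $K_4$-free and a triangulation, no vertex has its neighbourhood contained in another's (a $K_4$ would result), so $H$ is \emph{stiff}, meaning the identity map is a frozen $H$-colouring of $H$. One then embeds a literal copy $H^*$ of $H$ inside $G'$, coloured by the identity; every colouring reachable from the canonical ones must still colour $H^*$ by the identity, and this pins down the rest. In particular, any vertex joined to three pairwise non-adjacent vertices of $H^*$ (which exist because links are induced cycles of length $\ge 4$) is forced to a two- or three-element list. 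No homotopy, no degree.

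With that frozen copy in hand, the paper builds the gadgets concretely. The not-both-one gadget comes from a directed graph $\Phi$ on ordered adjacent pairs of $V(H)$, with an arc $(a,b)\to(c,d)$ whenever $ac,bc,bd\in E(H)$ and $ad\notin E(H)$; a directed path in $\Phi$ translates into a path of signal vertices each constrained to a two-element list, and $K_4$-freeness supplies the non-edges needed to find such paths around the link of $0$. The not-all-zero gadget is then assembled from several not-both-one gadgets plus a short path of vertices with three-element lists $\{0,2,3\}$ and $\{0,3,4\}$, again using only $K_4$-freeness and the link structure. Your proposal contains none of this construction, and the topological route you sketch in its place would, even if it could be made to work, be substantially harder than what is actually required.
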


As is the case in \cite{La} we do not expect the converse of our conjecture to hold.  Indeed, it does not. 
We will prove a more general result than Theorem \ref{triThm} which only requires $H$ to have the ``local'' structure of a triangulation near a particular vertex, and the ``global'' property of  ``stiffness'' which effectivly stops this local structure from dismantling to the particular vertex.  This more general result is stated and proved in Section~\ref{triSection} (Theorem~\ref{localThm}).


The rest of the paper is organized as follows. In the next section, we give an overview of the proofs of Theorems~\ref{quadThm} and~\ref{triThm}. In particular, we describe the types of gadgets used in the proofs and show that the existence of such gadgets is sufficient to prove the main theorems. In Section~\ref{quadSection}, we build up several basic structural properties about graphs satisfying the hypotheses of Theorem~\ref{quadThm} and use them to construct the required gadgets. We then deduce Theorem~\ref{ppCor} in Section~\ref{wheelSection} and the using a reduction for even wheels, apply it to prove Theorem~\ref{wheelThm}. 
 In Section~\ref{triSection}, we state and prove a generalization of Theorem~\ref{triThm}. 


\section{Overview of the Main Reduction}
\label{overviewSection}

The goal of this section is to introduce the main  gadgets used in the proofs of Theorems~\ref{quadThm} and~\ref{triThm}. We will mainly focus on Theorem~\ref{quadThm}, only commenting briefly at the end of the section about how the ideas can be adapted to the setting of Theorem~\ref{triThm}. For the time being, we let $H$ be any $K_{2,3}$-free finite quadrangulation other than the $4$-cycle. We claim that $H$ must contain a vertex of degree three. Indeed, by Euler's Polyhedral Formula, it must contain a vertex of degree at most three and since it is a quadrangulation which is $K_{2,3}$-free and not isomorphic to $C_4$, it cannot have a vertex of degree less than three (see Lemma~\ref{minDegis3} for a formal proof). So, we may choose an arbitrary vertex of degree three and label it $0$. Also, label the three faces incident with $0$ by $f_1,f_2$ and $f_3$ and, for $i\in\{1,2,3\}$, label the unique vertex incident to $f_i$ and not adjacent to $0$ by $i$. Since the three neighbours of $0$ are distinct and $H$ is $K_{2,3}$-free, the vertices $1,2$ and $3$ must be distinct. We let $\alpha_{1,2},\alpha_{2,3}$ and $\alpha_{3,1}$ be defined so that $\alpha_{i,j}$ is the common neighbour of vertices $i,j$ and $0$, which is unique because $H$ is $K_{2,3}$-free; see Figure~\ref{around0}. 

\begin{figure}[htbp]
\begin{center}
\begin{tikzpicture}
   \newdimen\R
   \R=1.65cm
   \newdimen\smallR
   \smallR=1.0cm
   \draw
\foreach [count=\n from 1] \x in {150,270,...,390} {
	(\x:\R) node [smallblack, label={[label distance=0pt]\x:{\n}}] (v\n){} 
};

\draw\foreach [count=\n from 1] \x in {150,270,...,390} {
	(\x:\smallR) node (f\n){$f_{\n}$} 
};

\draw (90:\R) node [smallblack, label={[label distance=0pt]90:{$\alpha_{3,1}$}}] (alpha31){};

\draw (210:\R) node [smallblack, label={[label distance=0pt]210:{$\alpha_{1,2}$}}] (alpha12){};

\draw (330:\R) node [smallblack, label={[label distance=0pt]330:{$\alpha_{2,3}$}}] (alpha23){};

\draw (0,0) node [smallblack, label={[label distance=0pt]160:{$0$}}] (v0) {};

\draw (alpha31)--(v1)--(alpha12)--(v2)--(alpha23)--(v3)--(alpha31);
\draw(v0)--(alpha31);
\draw(v0)--(alpha12);
\draw(v0)--(alpha23);
\end{tikzpicture}
\end{center}

\caption{The local structure near vertex $0$.}
\label{around0}
\end{figure}
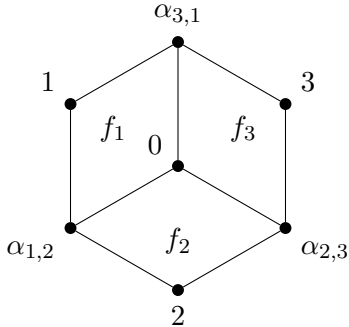

Our aim is to reduce the \Hrec{K_4} problem, which was shown to be PSPACE-complete in~\cite{4col}, to the \Hrec{H} problem. To this end, we let $(G,f,g)$ be an instance of \Hrec{K_4}, where $V(K_4)=\{1,2,3,4\}$ and will construct an instance $\left(G',f',g'\right)$ of \Hrec{H} such that $|V(G')|=O\left(|V(G)|^{2}\right)$ and $f$ reconfigures to $g$ if and only if $f'$ reconfigures to $g'$. The construction is broken down into four steps. We describe the first two steps now and postpone the description of the third and fourth until after some additional discussion. 

\begin{step}
\label{step1}
Each vertex $u\in V(G)$ is represented by four vertices $u_1,u_2,u_3$ and $u_4$ in $G'$. 
\end{step}

\begin{step}
\label{step2}
For each $u\in V(G)$ and $i\in\{1,2,3,4\}$, define 
\[f'\left(u_i\right):=\begin{cases} 1 & \text{if }f(u)=i,\\
0 & \text{otherwise}
\end{cases}\]
and define $g'\left(u_i\right)$ analogously.
\end{step}

As one may be able to glean from Step~\ref{step2}, the images of the vertices $u_1,u_2,u_3,u_4$ under an $H$-colouring of $G'$ will be used to encode the colour of $u$ under an associated $K_4$-colouring in a simple way. That is, we think of $u_i$ mapping to 1/0 as meaning that the $i$th colour is ``turned on/turned off'' at the vertex $u$. Note that we will not mind if more than one colour is turned on at $u$ (in fact, it is necessary to allow this in order for $u$ to transition between colours in the associated $K_4$-colourings). What we need in order to make the reduction work is to design gadgets which force the following properties to be maintained throughout any reconfiguration sequence starting with $f'$:
\begin{enumerate}[(i)]
\item\label{01} For each $u\in V(G)$ and $i\in\{1,2,3,4\}$, the colour of $u_i$ is either $0$ or $1$.
\item\label{proper} For each $uv\in E(G)$ and $i\in\{1,2,3,4\}$, the vertices $u_i$ and $v_i$ cannot map to $1$ at the same time.
\item \label{mustPick} For each $u\in V(G)$ at least one of the vertices $u_1,u_2,u_3,u_4$ maps to $1$. 
\end{enumerate}
Given that these properties are maintained, it will easily follow that if $f'$ reconfigures to $g'$, then $f$ reconfigures to $g$. Indeed, for each $H$-colouring in the reconfiguration sequence taking $f'$ to $g'$, we define a $K_4$-colouring by assigning each $u\in V(G)$ to the minimum $i\in\{1,2,3,4\}$ such that $u_i$ is mapped to $1$. By (\ref{mustPick}), such an $i$ always exists and, by (\ref{proper}), this choice will always produce a $K_4$-colouring. Clearly, any two consecutive such $K_4$-colourings will differ on at most one vertex. Also, applying this transformation to $f'$ itself yields $f$, and applying it to $g'$ yields $g$. Therefore, we obtain a reconfiguration sequence taking $f$ to $g$, as desired. 

However, when trying to prove the other direction, one soon realizes that it is important to make the gadgets sufficiently ``flexible'' so that we can mimic any reconfiguration sequence taking $f$ to $g$ by a reconfiguration sequence of $H$-colourings taking $f'$ to $g'$. That is, we need to not only block the ``undesirable configurations'' (e.g. $u_2$ and $v_2$ both mapping to $1$ for $uv\in E(G)$), but also to allow any sorts of ``allowed transitions'' between configurations (e.g. changing the colour of $u_3$ from $1$ to $0$ while $u_4$ is coloured with $1$). This discussion is an attempt to motivate condition (\ref{patternTransition}) of the following technical-looking definition. 
For a function $f$ on a set $X$ and a vector $x = (x_1, \dots, x_k)\in X^k$, we write $f(x)$ for $(f(x_1), \dots, f(x_k))$.  

\begin{defn}
\label{patternDef}
Let $H$ be a graph, $k$ be a positive integer and $\P \subseteq V(H)^k$ be a $k$-ary relation on $V(H)$ (the elements of which we call \emph{$k$-patterns}). A \emph{$\P$-gadget} is a graph $Y  = Y(x)$ where $x = (x_1, \dots, x_k)$ is any ordered set of {\em signal} vertices $x_1,\dots,x_k\in V(Y)$ such that the following hold.
\begin{enumerate}[(a)]
\item\label{patterns} For each $p \in \P$ there is a canonical $\zeta_p \in \Hom(Y,H)$ such that $\zeta_p(x) = p$.
\item\label{stuck} If $\psi\in \Hom(Y,H)$ reconfigures to $\zeta_p$ for some $p\in\P$, then $\psi(x)\in\P$. 
\item\label{patternTransition} If $p,q\in \P$ differ on at most one coordinate, then there exists a reconfiguration sequence taking $\zeta_p$ to $\zeta_q$ such that $\xi(x)\in\left\{p,q\right\}$ for every element $\xi$ of this sequence. 
\end{enumerate}
\end{defn}

As we will see shortly, the proof of Theorem~\ref{quadThm} boils down to establishing the following two lemmas. Given distinct vertices $a,b$ of $H$, say that $a$ is \emph{across} from $b$ if $a$ is not adjacent to $b$ and there exists a face of $H$ incident to both $a$ and $b$. 

\begin{lem}
\label{nboLem}
Let $H$ be a finite $K_{2,3}$-free quadrangulation which is not a $4$-cycle. If $a_0$ is across from $a_1$ and $b_0$ is across from $b_1$, then there there exists an $\left\{(a_0,b_0),(a_1,b_0),(a_0,b_1)\right\}$-gadget.
\end{lem}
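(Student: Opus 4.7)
My approach is to construct $Y$ using the two $4$-faces guaranteed by the across-relations. Since $a_0$ is across from $a_1$ and $H$ is a $K_{2,3}$-free quadrangulation, $a_0$ and $a_1$ have exactly two common neighbours $\alpha,\alpha'$, and the $4$-cycle $a_0\alpha a_1\alpha'$ bounds a face $F_a$ of $H$; let $\beta,\beta'$ similarly bound a face $F_b$ containing $b_0$ and $b_1$. The strategy is to build two ``switch'' sub-gadgets, one per signal, coupled by an auxiliary structure forbidding the pattern $(a_1,b_1)$.

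The first step is to build pinning sub-gadgets that force certain auxiliary vertices of $Y$ to take prescribed images in $V(H)$; I expect these to be constructed from small subgraphs of $H$ (paths and $4$-cycles arranged so as to exploit the rigidity of a quadrangulation). With pinning available, the switch sub-gadget for $x_1$ places $x_1$ opposite two pinned vertices $c_1,c_2$ in a $4$-cycle $x_1c_1 z c_2$, with $c_1$ pinned to $\alpha$ and $c_2$ pinned to $\alpha'$. Since $\{a_0,a_1\}$ are the only common neighbours of $\alpha$ and $\alpha'$ in $H$ (by the $K_{2,3}$-free hypothesis), the image of $x_1$ under any homomorphism is confined to $\{a_0,a_1\}$; symmetrically, $\psi(x_2) \in \{b_0,b_1\}$. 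A coupling gadget is added to exclude the pattern $(a_1,b_1)$ while leaving the three allowed patterns mutually reconfigurable.

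For property (a) of Definition~\ref{patternDef}, I would exhibit the three canonical homomorphisms $\zeta_p$ directly, extending the pinning assignments and signal values to the entire vertex set of $Y$ consistently with each allowed pattern. For property (c), reconfiguring between patterns that differ in a single coordinate corresponds to ``rotating'' the changing signal around its $4$-face (using the other common neighbour in $F_a$ or $F_b$ as an intermediate image) while the fixed signal stays put and the auxiliary vertices are adjusted locally.

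The main obstacle is property (b): showing that every $\psi \in \Hom(Y,H)$ in the reconfiguration component of some $\zeta_p$ satisfies $\psi(x) \in \P$. The switch sub-gadgets account for confinement of each signal to its prescribed pair, but ruling out the coupled configuration $(a_1,b_1)$ is where the real work lies. I expect this to be handled by a topological / winding-type invariant in the spirit of Wrochna~\cite{Wrochna}, exploiting the fact that $4$-cycles in $H$ bound faces of the quadrangulation: a single-vertex recolouring can only rotate a signal around one of $F_a, F_b$, so the signal pair can traverse $\P$ but cannot cross to the forbidden pattern without first leaving the quadrangulation-compatible reconfiguration component of the $\zeta_p$'s.
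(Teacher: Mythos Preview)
Your switch sub-gadgets are essentially the right idea and match what the paper does locally: pin the two common neighbours $\alpha,\alpha'$ of $a_0,a_1$ (the paper does this by including a frozen copy of $H$, which is stiff by $K_{2,3}$-freeness) and attach the signal vertex to both, forcing its image into $\{a_0,a_1\}$. The real gap is the ``coupling gadget''. You describe its desired behaviour but never construct it, and the closing appeal to a Wrochna-style winding invariant is not a construction: an invariant can certify that a given gadget has property~(b), but it does not produce the gadget. The difficulty is genuine, because the faces $F_a$ and $F_b$ can be arbitrarily far apart in $H$, so any coupling must somehow traverse the quadrangulation from one face to the other while maintaining the one-sided monotonicity that forbids $(a_1,b_1)$ yet still permits reconfiguration among the other three patterns.

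The paper resolves this with a global combinatorial argument rather than a local one. It defines a directed graph $\Phi$ on the set $A$ of ``across'' pairs, with an arc from $(a,b)$ to $(c,d)$ when $ac,bc,bd\in E(H)$ but $ad\notin E(H)$; a directed path $(c_1,d_1),\dots,(c_m,d_m)$ in $\Phi$ from $(a_1,a_0)$ to $(b_0,b_1)$ then yields the gadget as a path $y_1\cdots y_m$ with each $y_i$ pinned (via the frozen copy of $H$) to $\{c_i,d_i\}$. The non-edge $c_id_{i+1}\notin E(H)$ forces $y_i=c_i\Rightarrow y_{i+1}=c_{i+1}$, which is exactly the asymmetric coupling excluding $(a_1,b_1)$, while the edges $c_ic_{i+1},d_id_{i+1}$ give the reconfigurations needed for property~(c). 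The substantive work is then proving that $\Phi$ is strongly connected: the paper shows every vertex of $\Phi$ has in- and out-degree exactly~$2$, identifies the underlying undirected graph with an auxiliary graph $\Gamma$ on $A$, proves $\Gamma$ is connected (using the planar dual and, crucially, the existence of a degree-$3$ vertex in $H$ coming from Euler's formula), and concludes via an Eulerian circuit. None of this global structure is present in your proposal, and without it there is no gadget.
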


\begin{lem}
\label{nazLem}
Let $H$ be a finite $K_{2,3}$-free quadrangulation which is not a $4$-cycle and let $0,1\in V(H)$ such that $0$ has degree three and $1$ is across from $0$. Then there exists a $\left(\{0,1\}^4\setminus\{(0,0,0,0)\}\right)$-gadget.
\end{lem}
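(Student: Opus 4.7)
The plan is to build the gadget $Y$ by composing small NAND-style gadgets from Lemma~\ref{nboLem} together with a ``binary-forcing'' sub-gadget that exploits $K_{2,3}$-freeness. First, observe the key local fact: since $1$ is across from $0$, both $\alpha_{3,1}$ and $\alpha_{1,2}$ (the common neighbours provided by the face through $0$ and $1$) lie in $N_H(0)\cap N_H(1)$, and $K_{2,3}$-freeness forces $N_H(\alpha_{3,1})\cap N_H(\alpha_{1,2})=\{0,1\}$. Thus any vertex of $Y$ that is adjacent to two pre-coloured vertices of colours $\alpha_{3,1}$ and $\alpha_{1,2}$ is forced into $\{0,1\}$; this is the binary-forcing sub-gadget, which I would attach to each of the four signal vertices $x_1,\ldots,x_4$.

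Second, I would invoke Lemma~\ref{nboLem} with the choice $a_0=b_0=0$, $a_1=b_1=1$ to obtain a gadget $W$ realizing the relation $\{(0,0),(1,0),(0,1)\}$: this is a NAND gate. Identifying the two input signals of $W$ produces a NOT gate realizing the unary relation $\{(0,1),(1,0)\}$. Composing NAND with NOT gates at the two inputs yields an OR gate, and chaining three such OR gates in a balanced binary tree realises the $5$-ary relation $\{(x_1,x_2,x_3,x_4,y):y=x_1\vee x_2\vee x_3\vee x_4\}$. Pinning the output vertex $y$ to the colour $1$ (again using the binary-forcing structure together with an adjacent vertex pre-coloured $\alpha_{2,3}$, which is a neighbour of $0$ but not of $1$) yields the required $\P$-gadget.

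The conditions of Definition~\ref{patternDef} are then checked by recursion on the composition tree. For (a), the canonical homomorphism $\zeta_p$ is assembled from the canonical homomorphisms of each sub-gadget evaluated at the Boolean values dictated by $p$. For (b), if $\psi$ reconfigures to $\zeta_p$, the binary-forcing attachments force $\psi(x_i)\in\{0,1\}$ for each $i$, while the forced output $y=1$ propagates through the OR-tree via the blocking property (b) of each constituent gadget to ensure $\psi(x)\in\P$. For (c), given $p,q\in\P$ differing only in coordinate $i$, the flip at $x_i$ propagates along the unique root-to-leaf path in the tree from $x_i$ to the output, with internal states updated by invoking property (c) of each sub-gadget on that path; the three signals not being changed are held fixed throughout.

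The main obstacle is condition (c): verifying that one can ``propagate'' a single-coordinate flip through the chained OR-tree while keeping the other three signals frozen at their current values. The subtlety is that the intermediate variables of the sub-gadgets along the propagation path may need to pass through several reconfiguration steps, and at each such step the invariants enforced by the other sub-gadgets must remain intact. I expect to handle this by induction on the tree depth, using that each NAND and NOT sub-gadget from Lemma~\ref{nboLem} already satisfies the adjacent-pattern reconfiguration property, and that the branches of the tree disjoint from the flipped coordinate can remain in their canonical configuration throughout.
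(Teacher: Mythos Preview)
Your proposal has a genuine gap at the circuit-building step. The $\{(0,0),(1,0),(0,1)\}$-gadget supplied by Lemma~\ref{nboLem} has exactly two signal vertices: it is a binary \emph{constraint} (``not both one''), not a three-terminal NAND gate with an output. Identifying its two signals does not yield a NOT gate either; the only pattern with equal coordinates is $(0,0)$, so what you obtain is a one-signal gadget realising the constant relation $\{0\}$. Consequently the ``OR gate with an output'' that you go on to chain into a binary tree is never actually constructed, and there is no output vertex on which to pin the value $1$. There is also a deeper obstruction to any repair that stays inside $\{0,1\}$: every binary relation on $\{0,1\}$ obtainable from Lemma~\ref{nboLem} (by permuting which of $0,1$ plays $a_0,a_1,b_0,b_1$), together with the constants, is preserved by the Boolean majority operation, whereas $\{0,1\}^4\setminus\{(0,0,0,0)\}$ is not---apply majority coordinatewise to $(1,0,0,0),(0,1,0,0),(0,0,1,0)$. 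So long as every interface vertex is confined to $\{0,1\}$, the constraints cannot isolate the not-all-zero patterns. (As an aside, a frozen neighbour coloured $\alpha_{2,3}$ would pin a binary vertex to $0$, not $1$, since $\alpha_{2,3}\sim 0$ but $\alpha_{2,3}\not\sim 1$.)

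The paper escapes this obstruction precisely by letting intermediate signals leave $\{0,1\}$ and exploiting the degree-three geometry at $0$ directly. It invokes Lemma~\ref{nboLem} with the second signal ranging over $\{0,2\}$ (respectively $\{0,3\}$) to build gadgets $W_i(z_i,w_i)$ for which $z_1=z_2=0$ forces $w_1=2$ and $w_2=3$. A new vertex $x_{1,2}$ adjacent to $w_1,w_2$ and to a frozen copy of $0$ is then, by $K_{2,3}$-freeness, forced to $\alpha_{2,3}$; a further vertex $y_{1,2}$ adjacent to $x_{1,2}$ and to frozen copies of $\alpha_{1,2},\alpha_{3,1}$ is in turn forced to $0$. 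The pair $(z_3,z_4)$ is handled symmetrically to produce $y_{3,4}$, and a single $\{(1,0),(0,1),(1,1)\}$-gadget on $(y_{1,2},y_{3,4})$ then rules out $(0,0,0,0)$. The essential point is that the ``OR'' of $z_1$ and $z_2$ is realised via the vertices $2,3,\alpha_{2,3}\in V(H)$, not through Boolean composition of two-signal $\{0,1\}$-gadgets.
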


In general, for a graph $H$ and distinct  $0,1\in V(H)$, we refer to a  $\left\{(0,0),(1,0),(0,1)\right\}$-gadget as a \emph{not-both-one gadget} and a $\left(\{0,1\}^4\setminus\{(0,0,0,0)\}\right)$-gadget as a \emph{not-all-zero gadget}. Given these lemmas, we complete the construction of $(G',f',g')$ by applying the following steps. See Figure~\ref{G'Fig} for an illustration of the full construction of $(G',f',g')$. 

\begin{step}
\label{step3}
For each ordered pair $(u,v)$ with $uv\in E(G)$ and $i\in\{1,2,3,4\}$, we add a not-both-one gadget $Y(u_i,v_i)$ to $G'$ which is disjoint from all vertices added so far except for the signal vertices $u_i$ and $v_i$. We define $f'$ on $Y$ to agree with $\zeta_{\left(f'(u_i),f'(v_i)\right)}$. The definition of $g'$ on $Y$ is analogous.\footnote{Technically, we only require one of the gadgets $Y(u_i,v_i)$ or $Y(v_i,u_i)$ in order for the reduction to work. However, adding both gadgets provides symmetry which is convenient in the exposition of the proof of Lemma~\ref{gadgetImpliesHard}.}
\end{step}

\begin{step}
\label{step4}
For each $u\in V(G)$ we add a not-all-zero gadget $Z(u_1,u_2,u_3,u_4)$ to $G'$, disjoint from all vertices added so far except for the signal vertices $u_1,u_2,u_3,u_4$. We define $f'$ on $Z$ to agree with $\zeta_{\left(f'(u_1),f'(u_2),f'(u_3),f'(u_4)\right)}$. The definition of $g'$ on $Z$ is analogous.
\end{step}

This completes the construction of the instance $(G',f',g')$ of \Hrec{H}. Next, we prove a general lemma which says that the existence of a not-both-one gadget and a not-all-zero gadget is enough to prove that \Hrec{H} is PSPACE-complete.

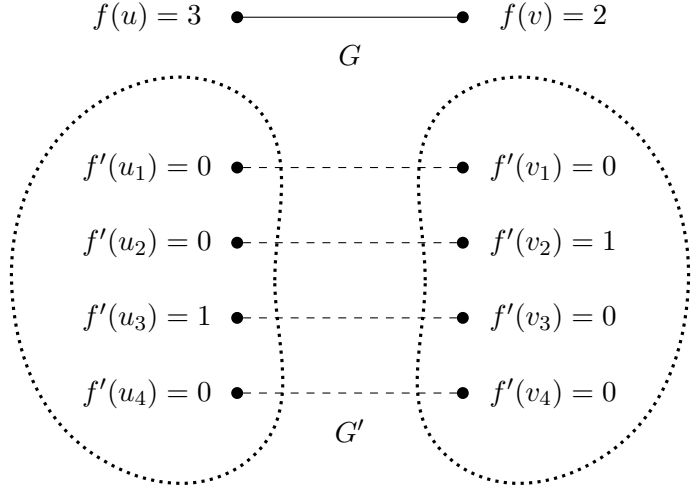
\begin{figure}[htbp]
\begin{center}
\begin{tikzpicture}
\draw (-1.5,0) node [smallblack,label={[label distance=4pt]90:{}}] (u) {}
--(1.5,0) node [smallblack,label={[label distance=4pt]90:{}}] (v) {};
\draw (-2.7,-0.0) node (fu) {$f(u)=3$};
\draw (2.7,-0.0) node (fv) {$f(v)=2$};

\draw (0,-0.5) node (G) {$G$};

\begin{scope}[shift={(0,-2)}]

\draw (-1.5,0) node [smallblack,label={[label distance=4pt]90:{}}] (u1) {};
\draw(1.5,0) node [smallblack,label={[label distance=4pt]90:{}}] (v1) {};
\draw[dashed](u1)--(v1);

\draw (-2.7,-0.0) node (fu1) {$f'(u_1)=0$};
\draw (2.7,-0.0) node (fv1) {$f'(v_1)=0$};

\begin{scope}[shift={(0,-1)}]
\draw (-1.5,0) node [smallblack,label={[label distance=4pt]90:{}}] (u2) {};
\draw(1.5,0) node [smallblack,label={[label distance=4pt]90:{}}] (v2) {};
\draw[dashed](u2)--(v2);

\draw (-2.7,-0.0) node (fu2) {$f'(u_2)=0$};
\draw (2.7,-0.0) node (fv2) {$f'(v_2)=1$};
\end{scope}

\begin{scope}[shift={(0,-2)}]
\draw (-1.5,0) node [smallblack,label={[label distance=4pt]90:{}}] (u3) {};
\draw(1.5,0) node [smallblack,label={[label distance=4pt]90:{}}] (v3) {};
\draw[dashed](u3)--(v3);

\draw (-2.7,-0.0) node (fu3) {$f'(u_3)=1$};
\draw (2.7,-0.0) node (fv3) {$f'(v_3)=0$};
\end{scope}

\begin{scope}[shift={(0,-3)}]
\draw (-1.5,0) node [smallblack,label={[label distance=4pt]90:{}}] (u4) {};
\draw(1.5,0) node [smallblack,label={[label distance=4pt]90:{}}] (v4) {};
\draw[dashed](u4)--(v4);

\draw (-2.7,-0.0) node (fu4) {$f'(u_4)=0$};
\draw (2.7,-0.0) node (fv4) {$f'(v_4)=0$};

\draw (0,-0.5) node (G) {$G'$};
\end{scope}

\draw[very thick, dotted] (-1,-1.5) to [closed, quick curve through={(-1,-1.5) . . (-1.2,0.75) . . (-1.5,1) . . (-3.5,0.75) . . (-4.5,-1.5) . . (-3.5,-3.75) . . (-1.5,-4) . . (-1.2,-3.75) . . (-1,-1.5)}] (-1,-1.5);

\draw[very thick, dotted] (1,-1.5) to [closed, quick curve through={(1,-1.5) . . (1.2,0.75) . . (1.5,1) . . (3.5,0.75) . . (4.5,-1.5) . . (3.5,-3.75) . . (1.5,-4) . . (1.2,-3.75) . . (1,-1.5)}] (1,-1.5);
\end{scope}
\end{tikzpicture}
\end{center}
\caption{An illustration of the way in which two adjacent vertices $u,v$ of $G$ are represented in the graph $G'$. Each dashed line connects signal vertices of a pair of not-both-one gadgets and each thick dotted curve encloses the four signal vertices of a not-all-zero gadget.}
\label{G'Fig}
\end{figure}

\begin{lem}
\label{gadgetImpliesHard}
If $H$ is a finite graph and $0,1$ are distinct vertices of $H$ such that there exists a not-both-one gadget and a not-all-zero gadget, then \Hrec{H} is PSPACE-complete. 
\end{lem}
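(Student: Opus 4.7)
The plan is to reduce from \Hrec{K_4}, known to be PSPACE-complete by~\cite{4col}, using precisely the construction in Steps~\ref{step1}--\ref{step4}. Given an instance $(G,f,g)$ of \Hrec{K_4}, I assemble $(G',f',g')$ as described. Since $H$ is fixed and each gadget has bounded size, $|V(G')|=O(|V(G)|^{2})$ and the reduction is polynomial-time computable; membership of \Hrec{H} in PSPACE is standard. The task is therefore to prove that $f$ reconfigures to $g$ if and only if $f'$ reconfigures to $g'$.

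For the easier reverse direction, suppose $f'=h_0,\dots,h_m=g'$ is a reconfiguration in $\Hom(G',H)$. For each gadget $Y$ used in the construction (either a not-both-one $Y(u_i,v_i)$ or a not-all-zero $Z(u_1,\dots,u_4)$) and each index $t$, the restriction $h_t|_Y$ reconfigures in $\Hom(Y,H)$ back to the canonical initial colouring $f'|_Y=\zeta_{p_0}$ (run the restriction of the global sequence from $h_t$ back to $f'$ and drop steps that do not touch $V(Y)$). Property~(b) of Definition~\ref{patternDef} then forces the signal vector $h_t(x)$ to lie in $\P$. Applied to every gadget in $G'$, this yields exactly properties~(i)--(iii) from the discussion preceding Definition~\ref{patternDef}. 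I then define $\phi_t\in\Hom(G,K_4)$ by $\phi_t(u)=\min\{i:h_t(u_i)=1\}$; (iii) makes this well-defined and (ii) makes it proper. Since $h_t$ and $h_{t+1}$ differ on at most one vertex of $G'$, the sequence $\phi_0,\dots,\phi_m$ changes on at most one vertex of $G$ at each step, so deleting duplicates gives a $K_4$-reconfiguration from $\phi_0=f$ to $\phi_m=g$.

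For the forward direction, I would simulate the $K_4$-reconfiguration $f=\phi_0,\dots,\phi_n=g$ in $G'$ one move at a time. When $\phi_t\to\phi_{t+1}$ recolours some $u$ from $i$ to $j$, validity in $K_4$ forces $\phi_t(v)\notin\{i,j\}$ for every neighbour $v$ of $u$, and the plan in $G'$ is first to flip $u_j$ from $0$ to $1$ and then flip $u_i$ from $1$ to $0$. Each such flip is a single-coordinate move between patterns in $\P$ inside every affected gadget: for $Y(u_j,v_j)$ the other signal is pinned at $0$ by the choice of $\phi_t(v)$, so the pattern moves from $(0,0)$ to $(1,0)$; the check for $Y(v_j,u_j)$ is symmetric; for $Z(u_1,\dots,u_4)$ the signal vector always retains at least one coordinate equal to $1$ throughout both flips. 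Property~(c) of each gadget supplies an in-gadget reconfiguration realising this one-coordinate flip while keeping signal vectors confined to the two endpoint patterns.

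The main obstacle is that the signal vertex being flipped is shared by several gadgets simultaneously, so the in-gadget sequences cannot be used naively in parallel. My plan is to execute them serially: using the portion of each gadget's sequence that only modifies internal vertices (which are disjoint across gadgets), bring every affected gadget to a state in which flipping the shared signal is a valid single-vertex move in that gadget; then perform one global flip of the signal in $G'$, which is simultaneously legal in every gadget it touches because each gadget is prepared; then, gadget by gadget, complete the in-gadget sequences to reach the canonical colouring for the new pattern. Concatenating these blocks across all $K_4$-moves yields the desired reconfiguration from $f'$ to $g'$, completing the reduction.
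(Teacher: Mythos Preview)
Your reduction and the reverse direction are exactly as in the paper, and your overall plan for the forward direction is the right one. There is, however, a genuine gap in your ``prepare, flip once, complete'' protocol. Property~(c) of Definition~\ref{patternDef} only promises that the signal vector stays in $\{p,q\}$ along the in-gadget sequence; it does \emph{not} promise that the shared signal vertex flips only once. So after your single global flip you may, while completing gadget~$1$, have to flip the shared signal back to $0$ and then to $1$ again. At that stage the other gadgets are still in their ``prepared'' state and tolerate both values, so this is fine. The trouble comes \emph{after} gadget~$1$ has been completed to its canonical $\zeta_q$: nothing guarantees $\zeta_q$ tolerates the signal being $0$, yet gadget~$2$'s remaining sequence may still need to flip the signal back to $0$. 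Your serial completion therefore can fail.

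The paper closes this gap with a three-pass protocol rather than two. First, for every affected gadget, run its sequence up to just before the \emph{first} time the shared signal changes; each gadget is now compatible with both signal values. Second, one gadget at a time, continue its sequence (letting the shared signal oscillate freely, which the other gadgets tolerate) and stop at the \emph{last} step at which the signal is $0$; every gadget is again compatible with both values, and crucially the remainder of each gadget's sequence now contains no further signal changes. Third, flip the signal to $1$ once, and finish each gadget's tail, which now touches only internal vertices. Inserting this middle pass into your argument makes it go through.
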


\begin{proof}
Given an instance $(G,f,g)$ of the \Hrec{K_4} problem, we let  $(G',f',g')$ be an instance for \Hrec{H} constructed using Steps~\ref{step1}-\ref{step4} outlined above. Note that the size of these gadgets depends only on $H$ and so $|V(G')|=O\left(|V(G)|+|E(G)|\right) =O\left(|V(G)|^2\right)$. We show that $f$ reconfigures to $g$ if and only if $f'$ reconfigures to $g'$. 

First suppose that there is a reconfiguration sequence $f_0',\dots,f_m'\in \Hom(G',H)$ taking $f'$ to $g'$. By Step~\ref{step4} and the definition of the not-all-zero gadget, we know that, for each $u\in V(G)$ and $0\leq j\leq m$, there exists $i\in\{1,2,3,4\}$ such that $f_j'(u_i)=1$. For $1\leq j\leq m$, define $f_j:V(G)\to\{1,2,3,4\}$ by
\[f_j(u):=\min\left\{i: f_j'(u_i)=1\right\}\]
for $u\in V(G)$. By Step~\ref{step3} and the definition of the not-both-one gadget, we have that each $f_j$ is a $K_4$-colouring of $G$; that is, adjacent vertices of $G$ receive distinct colours. Clearly, by construction, $f_0=f$ and $f_m=g$. Also, since, for $0\leq j\leq m-1$, the mappings $f'_j$ and $f'_{j+1}$ differ on exactly one vertex, the mappings $f_j$ and $f_{j+1}$ differ on at most one vertex. Thus, we can take a subsequence of $f_0,\dots,f_m$ obtained by deleting repetitions (i.e. removing one of $f_i$ or $f_{i+1}$ when $f_i=f_{i+1}$) to get a reconfiguration sequence taking $f$ to $g$. 

For the other direction, suppose that there is a reconfiguration sequence $f_0,\dots,f_m\in \Hom(G,K_4)$ taking $f$ to $g$. We construct a reconfiguration sequence taking $f'$ to $g'$. We may assume that $m=1$, i.e. that $f$ and $g$ differ on a unique vertex $u$, since the general case follows by induction on $m$. So, without loss of generality, we assume that $f(u)=1$ and that $g(u)=2$ and that $f(v)=g(v)$ for all $v\in V(G)\setminus\{u\}$. 

Since $f$ and $g$ are $K_4$-colourings which differ only on $u$, we have that $f(v),g(v)\in\{3,4\}$ for every vertex $v$ adjacent to $u$. By construction, this means that $f'$ assigns
\begin{itemize}
\item the colouring $\zeta_{(1,0)}$ to $Y(u_1,v_1)$,
\item the colouring $\zeta_{(0,1)}$ to $Y(v_1,u_1)$, 
\item the colouring $\zeta_{(0,0)}$ to both $Y(u_2,v_2)$ and $Y(v_2,u_2)$, and
\item the colouring $\zeta_{(1,0,0,0)}$ to $Y(u_1,u_2,u_3,u_4)$. 
\end{itemize}
Similarly, $g'$ assigns
\begin{itemize}
\item the colouring $\zeta_{(0,0)}$ to both of $Y(u_1,v_1)$ and $Y(v_1,u_1)$, 
\item the colouring $\zeta_{(1,0)}$ to $Y(u_2,v_2)$, \item the colouring $\zeta_{(0,1)}$ to $Y(v_2,u_2)$, and
\item the colouring $\zeta_{(0,1,0,0)}$ to $Y(u_1,u_2,u_3,u_4)$. 
\end{itemize}
On all other vertices of $G'$, the colourings $f'$ and $g'$ agree with one another. By condition (\ref{patternTransition}) of Definition~\ref{patternDef}, we know that we can reconfigure $\zeta_{(0,0)}$ to $\zeta_{(1,0)}$ in such a way that the colour of the first signal vertex stays in $\{0,1\}$ the second is mapped to $0$ throughout. For each neighbour $v$ of $u$, one at a time, we apply the first part of this reconfiguration sequence on $Y(u_2,v_2)$, stopping just before the first step in which the colour of $u_2$ changes from $0$ to $1$. Similarly, on $Y(v_2,u_2)$, apply the first steps of a reconfiguration sequence from $\zeta_{(0,0)}$ to $\zeta_{(0,1)}$ and, on $Y(u_1,u_2,u_3,u_4)$, apply the first steps of a reconfiguration sequence from $\zeta_{(1,0,0,0)}$ to $\zeta_{(1,1,0,0)}$, in all cases stopping just before the first time the colour of $u_2$ changes from $0$ to $1$. Next, we go through each of these gadgets again, one by one, and continue the reconfiguration sequence, this time stopping at the last step in which the colour of $u_2$ is $0$. Then, go through each gadget one last time to complete the reconfiguration sequence. Note that this procedure maintains an $H$-colouring throughout since any two of these gadgets only intersect on $u_2$ and possibly a vertex $v_2$ which does not change colour. 

Thus, we have arrived at a colouring $h'$ which assigns
\begin{itemize}
\item the colouring $\zeta_{(1,0)}$ to $Y(u_1,v_1)$,
\item the colouring $\zeta_{(0,1)}$ to $Y(v_1,u_1)$, 
\item the colouring $\zeta_{(1,0)}$ to $Y(u_2,v_2)$,
\item the colouring $\zeta_{(0,1)}$ to $Y(v_2,u_2)$, and
\item the colouring $\zeta_{(1,1,0,0)}$ to $Y(u_1,u_2,u_3,u_4)$. 
\end{itemize}
and, on all other vertices of $G'$, agrees with both $f'$ and $g'$. By applying the same steps as above with $g'$ in the place of $f'$ and swapping the roles of colours $1$ and $2$, we see that $g'$ reconfigures to $h'$ as well and so, by symmetry and transitivity of the ``reconfigures to'' relation, $f'$ reconfigures to $g'$. This completes the proof.
\end{proof}

Before closing this section, let us make a few remarks about Theorem~\ref{triThm}. Let $H$ be a finite $K_4$-free reflexive triangulation which is not a reflexive triangle. This time, we will let $0$ be an arbitrary vertex and label the neighbours of $0$ apart from $0$ itself by $1,\dots,k$ in clockwise order with respect to the embedding of $H$, where $k$ is the degree of $0$ (we follow the convention that the loop on $0$ does  not count towards its degree). Note that, as $H$ is a triangulation and not a triangle, the neighbours of $0$ apart from $0$ itself must form a reflexive cycle in $H$; this is proved formally in Lemma~\ref{localImplies}. The following lemmas are the crux of the proof of Theorem~\ref{triThm}.

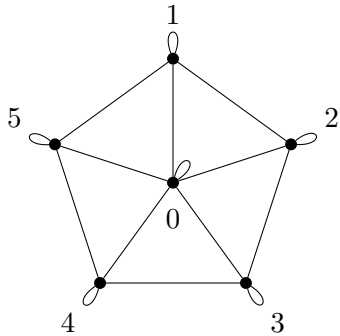
\begin{figure}[htbp]

\begin{tikzpicture}
   \newdimen\R
   \R=1.65cm
  \draw (522:\R)
\foreach [count=\n from 1] \x in {450,378,...,162} {
	-- (\x:\R) node [smallblack, label={[label distance=7pt]\x:{\n}}] (v\n){} 
};
\draw (0,0) node [smallblack, label={[label distance=4pt]270:{$0$}}] (alpha) {};
\draw (alpha) -- (v1);
\draw (alpha) -- (v2);
\draw (alpha) -- (v3);
\draw (alpha) -- (v4);
\draw (alpha) -- (v5);

\path (alpha) edge[ out=75, in=33
                , loop
                , distance=0.4cm]
            node[above=3pt] {} (alpha);

\path (v1) edge[ out=69, in=111
                , loop
                , distance=0.4cm]
            node[above=3pt] {} (v1);
            
\path (v5) edge[ out=142, in=183
                , loop
                , distance=0.4cm]
            node[above=3pt] {} (v5);
            
\path (v4) edge[ out=214, in=255
                , loop
                , distance=0.4cm]
            node[above=3pt] {} (v4);
            
\path (v3) edge[ out=286, in=327
                , loop
                , distance=0.4cm]
            node[above=3pt] {} (v3);
            
\path (v2) edge[ out=358, in=399
                , loop
                , distance=0.4cm]
            node[above=3pt] {} (v2);
\end{tikzpicture}

\caption{The subgraph of $H$ induced by the vertex $0$ and its neighbours in the case $k=5$.}
\label{nb0TriFig}
\end{figure}

\begin{lem}
\label{nboLemTri}
If $H$ be a finite reflexive $K_4$-free triangulation which is not a reflexive triangle and $01\in E(H)$ with $0\neq 1$, then there there exists a not-both-one gadget.
\end{lem}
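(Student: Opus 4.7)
The plan is to construct the not-both-one gadget $Y(x_1, x_2)$ explicitly, exploiting the local triangulation structure around the vertex $0$ depicted in Figure~\ref{nb0TriFig}. A naive attempt---taking $Y$ to be just the edge $x_1 x_2$---already satisfies condition~(a) of Definition~\ref{patternDef}: the three canonical colourings $\zeta_{(0,0)}$, $\zeta_{(1,0)}$, $\zeta_{(0,1)}$ exist using the loops and the edge $01 \in E(H)$. However, a single edge fails condition~(b): the colouring $\psi$ sending both signals to $1$ trivially reconfigures to $\zeta_{(0,1)}$ in one recolouring step. A successful gadget must therefore force every homomorphism with $\psi(x_1) = \psi(x_2) = 1$ to lie in a component of $\bHom(Y,H)$ distinct from those containing the canonical colourings.

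First I would verify that, since $H$ is a $K_4$-free triangulation which is not a reflexive triangle, the neighbourhood $N_H(0) \setminus \{0\}$ induces a reflexive cycle $1 = v_1, v_2, \dots, v_k$ with $k \geq 4$ (a fact essentially proved in the forthcoming Lemma~\ref{localImplies}). Building on this local ``wheel'' structure, I would construct $Y$ as a carefully subdivided cycle or ladder in which an auxiliary substructure is attached between $x_1$ and $x_2$ so that any $H$-colouring extending $\psi(x_1) = \psi(x_2) = 1$ must map this substructure in a way that ``winds'' around $0$ in the $2$-skeleton of the clique complex of $H$. The three canonical colourings, by contrast, map all of $Y$ into the edge $01$ together with its loops, and so have trivial winding. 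Condition~(c) is then checked by giving explicit reconfiguration sequences between canonical colourings that stay entirely within $\{0, 1\} \subseteq V(H)$: changing the colour of $x_1$ from $0$ to $1$ (with $x_2$ held at $0$) is carried out by propagating the change along a path in $Y$ using only the two vertices $0$ and $1$.

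The main obstacle is condition~(b), namely showing that the winding invariant is preserved under single-vertex recolouring, so that the ``both $1$'' colouring cannot be reconfigured to any canonical one. The key tool here is a rigidity argument that uses $K_4$-freeness crucially: because no four vertices of $H$ form a clique, any recolouring of a vertex $v \in V(Y)$ slides $\psi(v)$ along an edge of $H$ whose endpoints determine the face structure of the surrounding triangulation essentially uniquely. Combined with the spherical topology of $H$, this rigidity should prevent any sequence of local moves from converting a winding colouring into a non-winding one. Designing $Y$ so that this obstruction is explicitly witnessed, and making the winding/recolouring analysis rigorous, is where I expect the bulk of the work to lie; the verifications of (a) and (c) should then follow directly from the construction.
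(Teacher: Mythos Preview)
Your proposal is not yet a proof: the gadget $Y$ is never constructed, the ``winding invariant'' is never defined, and you explicitly defer ``the bulk of the work'' to the end. But there is a more concrete problem with the plan as stated. You want the three canonical colourings to map \emph{all} of $Y$ into the reflexive edge $\{0,1\}$, and then to block the pattern $(1,1)$ by arguing that any $\psi$ with $\psi(x_1)=\psi(x_2)=1$ must ``wind'' and hence cannot reconfigure to a canonical colouring. These two requirements are incompatible: since $\{0,1\}$ induces a reflexive edge in $H$, \emph{every} map from $Y$ into $\{0,1\}$ is a homomorphism. In particular there is an extension of $(1,1)$ that stays inside $\{0,1\}$, has zero winding, and reconfigures to the canonical colourings by single-vertex moves within $\{0,1\}$. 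Condition~(b) requires you to block \emph{all} $(1,1)$-extensions, not just a winding one, so the gadget must contain structure that forces vertices out of $\{0,1\}$---and then the canonical colourings cannot live entirely in $\{0,1\}$ either.

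The paper's approach avoids topology altogether and is purely combinatorial. One defines a directed graph $\Phi$ on ordered edges $(a,b)$ of $H$, with an arc $(a,b)\to(c,d)$ whenever $ac,bc,bd\in E(H)$ but $ad\notin E(H)$; a directed path $(c_1,d_1),\dots,(c_m,d_m)$ from $(1,0)$ to $(0,1)$ then yields the gadget. The gadget is a path $y_1\cdots y_m$ together with, for each $i$, a $\{c_i,d_i\}$-list gadget on $y_i$ (built from a frozen copy of $H$, using stiffness and $K_4$-freeness). Condition~(b) is then a one-line propagation: if $\psi(y_1)=c_1$, the non-adjacency $c_1d_2\notin E(H)$ forces $\psi(y_2)=c_2$, and so on down to $\psi(y_m)=c_m$. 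The existence of the required path in $\Phi$ is read off directly from the cyclic structure of $N(0)$, using $K_4$-freeness to guarantee that $i$ and $i{+}2$ are non-adjacent. No winding invariant is needed; the non-adjacencies built into the arcs of $\Phi$ do all the work.
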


\begin{lem}
\label{nazLemTri}
If $H$ is a finite reflexive $K_4$-free triangulation which is not a reflexive triangle and $01\in E(H)$ with $0\neq 1$, then there there exists a not-all-zero gadget.
\end{lem}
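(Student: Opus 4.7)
The strategy parallels the irreflexive Lemma~\ref{nazLem}: build a gadget $Z(x_1,\dots,x_4)$ containing a long reflexive cycle $D$ through the four signal vertices whose canonical images in $H$ all have the same non-trivial ``winding'' around $0$ in the clique complex of $H$, then argue that this winding is a reconfiguration invariant and is incompatible with all four signals being mapped to $0$.

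First I would set up the local structure near $0$ using the (soon-to-be-proved) Lemma~\ref{localImplies}: the neighbours of $0$ in $H$ form a reflexive cycle $c_1=1,c_2,\dots,c_k$, listed in the cyclic order induced by the embedding, and $k\ge 4$ since a reflexive triangulation with a degree-$3$ vertex would contain a $K_4$. The closed walk $c_1c_2\cdots c_kc_1$ represents a non-trivial loop in the link of $0$ in the clique complex of $H$; this is the ``two-dimensional hole at $0$'' that the gadget will exploit. Next I would take $D$ to be a reflexive cycle of length $N=4(k+1)$ with vertices $v_0,\dots,v_{N-1}$, declare $x_j:=v_{(j-1)(k+1)}$ for $j\in\{1,2,3,4\}$, and attach to each signal vertex a small ``restraint'' subgadget built from copies of the not-both-one gadget provided by Lemma~\ref{nboLemTri} against an anchor vertex coloured $0$, so that throughout any reconfiguration from a canonical map the image of each $x_j$ is confined to $\{0,1\}$.

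For each pattern $p=(p_1,\dots,p_4)\in\{0,1\}^4\setminus\{\mathbf 0\}$ I would define $\zeta_p$ arc-by-arc: each of the four arcs of length $k+1$ between consecutive signals is mapped to a walk in $H$ whose endpoints are $p_j$ and $p_{j+1}$, and these walks are chosen so that their combined contribution to the winding around $0$ is exactly $+1$, independent of $p$. Concretely, designate one distinguished arc of the cycle (depending on $p$) to carry the walk $c_1c_2\cdots c_k0$, contributing $+1$ to the winding, while the remaining arcs use short walks (constant at $0$ or $1$, or a single hop through $c_1$ or $c_k$) that contribute $0$. Property~(a) then holds by construction; property~(c) holds because flipping the value of a single signal $x_j$ amounts to locally rerouting the two arcs incident to $x_j$, and the reflexive cycle $c_1,\dots,c_k$ supplies a ``turntable'' that lets these rerouting be carried out one vertex at a time without disturbing the rest of $D$ or the attached not-both-one subgadgets.

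The main obstacle is property~(b): ruling out the all-zero image of the signals. The key claim is that the winding number of the image of $D$ around $0$ is invariant under reconfiguration in $\mathbf{Hom}(Z,H)$; this is the reflexive analogue of Wrochna's topological preservation argument and I would verify it by checking that a single-vertex change in $\psi$ on $D$ replaces an edge's image by one lying in a common reflexive triangle with it, so it alters the image of $D$ by a contractible move in the clique complex. Since every $\zeta_p$ has winding $+1$, so does every reachable $\psi$. On the other hand, if $\psi(x_j)=0$ for all $j$, each arc becomes a walk of length $k+1$ from $0$ to $0$ in $H$; a case analysis using $K_4$-freeness, the triangulation structure, and the restraint subgadgets (which rule out intermediate excursions through neighbours of $0$ other than the prescribed ones) will show that such a walk has winding $0$, summing to total winding $0\neq +1$ — the desired contradiction. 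Making the winding-invariance statement precise and pinning down the case analysis for arcs from $0$ to $0$ is where I expect the real work to lie.
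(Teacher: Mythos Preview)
Your winding-number approach has a genuine gap: the claimed invariant does not exist. The clique complex of a $K_4$-free triangulation of the sphere is (homeomorphic to) $S^2$, which is simply connected, so closed walks carry no homotopy information and hence no ``winding around $0$''. Concretely, take your distinguished arc mapped to $c_1c_2\cdots c_k0$. Its interior vertices are unconstrained (you only attach restraint gadgets to the four signals), and each of them can be moved to $0$ one at a time: if $v_{j-1}\mapsto c_{j-1}$ or $0$, $v_j\mapsto c_j$, $v_{j+1}\mapsto c_{j+1}$ or $0$, then recolouring $v_j$ to $0$ is legal because $0$ is adjacent to every $c_i$ and to itself. After these moves the arc is $c_1,0,0,\dots,0$, which has lost whatever winding it had, while no signal has changed. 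So your key claim in~(b) fails outright; this is not a matter of ``pinning down a case analysis'' but of the underlying space having trivial $\pi_1$. (Puncturing at $0$ does not help either: the image of $D$ can and does pass through $0$.)

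The paper takes a completely different, non-topological route. Rather than a long cycle with a global invariant, it builds a short path $y_1y_2y_3y_4$ equipped with small lists $\{0,1\},\{0,2,3\},\{0,3,4\},\{0,5\}$ (obtained from the triangulation structure), together with not-both-one style gadgets $W_i(z_i,w_i)$ that force $w_i=\beta_i$ whenever $z_i=0$; since $\beta_i$ is not adjacent to $0$, this in turn forces $y_i\neq 0$. If all four $z_i$ were $0$, then $K_4$-freeness (which implies $c_ic_{i+2}\notin E(H)$) pins $y_1=1$, $y_2=2$, $y_4=5$, $y_3=4$, contradicting the edge $y_2y_3$. This is a purely local forbidden-pattern argument; the ``two-dimensional hole'' is never used directly, only the $K_4$-freeness of the link of $0$.
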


As was mentioned in the introduction, we will actually prove a more general result than Theorem~\ref{triThm} and, thus, will require more general lemmas than Lemmas~\ref{nboLemTri} and~\ref{nazLemTri}. These lemmas are stated and proved in Section~\ref{triSection}.

\section{Gadgets for Quadrangulations}
\label{quadSection}

Throughout this section, let $H$ be a finite $K_{2,3}$-free quadrangulation which is not a $4$-cycle. We begin by obtaining some basic structural properties of $H$ which will be useful in the proofs of Lemmas~\ref{nboLem} and~\ref{nazLem}.

\subsection{Basic Structural Properties of \texorpdfstring{$\boldsymbol{H}$}{H}}

The following two lemmas highlight some of the main ways in which we exploit the fact that $H$ is $K_{2,3}$-free.

\begin{lem}
\label{nonAdj}
If $abcd$ and $a'b'cd$ are distinct cycles of $H$, then $a$ is not adjacent to $b'$.
\end{lem}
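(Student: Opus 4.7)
The plan is to assume for contradiction that $a$ is adjacent to $b'$ and then exhibit a copy of $K_{2,3}$ in $H$, contradicting the $K_{2,3}$-free hypothesis. The natural candidate bipartition to try first is $\{a,c\}$ versus $\{b,b',d\}$. Four of the six required edges, namely $ab$, $ad$, $cb$ and $cd$, come from the $4$-cycle $abcd$; the edge $cb'$ comes from the $4$-cycle $a'b'cd$; and the edge $ab'$ is the hypothesized adjacency. So all six edges are present, and the only remaining task is to verify that these five vertices are distinct.

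The distinctness of $a,b,c,d$ and the fact that $b'\neq c,d$ both follow from the cycles. The equality $a = b'$ is ruled out because the assumption $a\sim b'$ would then produce a loop, contradicting the fact that $H$ is irreflexive. The only case that genuinely requires separate treatment is $b=b'$.

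In the case $b=b'$, the two cycles $abcd$ and $a'bcd$ being distinct forces $a\neq a'$. Now I would switch to the candidate bipartition $\{b,d\}$ versus $\{a,a',c\}$. All six edges are present (the edges $ab,ad,cb,cd$ from $abcd$ and the edges $a'b,a'd$ from $a'bcd$), and the five vertices are distinct: $b\neq d$ and $a\neq a'$, while $a,a'\neq c$ because $a,c$ are diagonal in $abcd$ and $a',c$ are diagonal in $a'bcd$, and $a,a',c$ are all distinct from $b$ and from $d$ again by the cycle structure. This produces a copy of $K_{2,3}$, again a contradiction.

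The main obstacle here is just the bookkeeping between the generic case $b\neq b'$ and the degenerate case $b=b'$; the latter is interesting because in that situation the forbidden adjacency $a\sim b'$ is automatic from the cycle $abcd$, and what the argument really rules out is the coexistence of the two cycles. Notably, the argument uses only the $K_{2,3}$-free hypothesis and not the planar embedding itself, so Lemma~\ref{nonAdj} is really a purely combinatorial consequence of $K_{2,3}$-freeness and should be a convenient tool for the more intricate structural lemmas to follow.
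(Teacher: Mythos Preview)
Your proof is correct and follows essentially the same approach as the paper's: assume $a\sim b'$, observe that $\{b,d,b'\}$ are common neighbours of $a$ and $c$, reduce to the case $b=b'$, and then exhibit $\{a,a',c\}$ as three common neighbours of $b$ and $d$. The only cosmetic difference is that you make the exclusion $a\neq b'$ explicit via irreflexivity, whereas the paper absorbs this into the blanket assertion that two vertices with three common neighbours yield a $K_{2,3}$.
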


\begin{proof}
Suppose that $a$ is adjacent to $b'$. Then $b,d$ and $b'$ are all common neighbours of $a$ and $c$. As $H$ is $K_{2,3}$-free, $a$ and $c$ cannot have more than two common neighbours, and so it must be the case that some of these vertices coincide. Since $b$ and $d$ are distinct vertices on a cycle (by hypothesis), and so are $b'$ and $d$, it must be the case that $b=b'$. However, now we get that $a,c$ and $a'$ are common neighbours of $b$ and $d$, and these three vertices are distinct from one another because $a$ and $c$ are on a cycle, as are $a'$ and $c$, and the cycles $abcd$ and $a'b'cd$ are distinct by hypothesis. This contradicts the assumption that $H$ is $K_{2,3}$-free and completes the proof. 
\end{proof}

\begin{lem}
\label{minDegis3}
Every vertex of $H$ has degree at least three.
\end{lem}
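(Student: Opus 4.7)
The plan is to derive a contradiction by analyzing the planar embedding locally around a hypothetical vertex $v$ of degree at most two. Since $H$ is connected and has more than one vertex (as it contains a $4$-face, hence at least four vertices), no vertex has degree $0$. A vertex of degree $1$ is immediately ruled out: its unique incident edge lies on the boundary of some face, which by hypothesis is a $4$-cycle, but a $4$-cycle through $v$ requires $v$ to be incident to two edges of that cycle, contradicting $\deg(v)=1$.

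The substantial case is $\deg(v)=2$. I would let $u_1,u_2$ denote the two neighbors of $v$ and use the standard fact that in the planar embedding $v$ is incident to exactly two face-corners, one on each side of the path $u_1vu_2$. Consequently, $v$ lies on the boundary of exactly two faces $F_1,F_2$, and since each bounding $4$-cycle must contain both edges $vu_1$ and $vu_2$, these cycles have the form $vu_1x_1u_2$ and $vu_1x_2u_2$ for some vertices $x_1,x_2\notin\{v,u_1,u_2\}$. A short dichotomy then finishes the argument: if $x_1\neq x_2$ then $v,x_1,x_2$ are three distinct common neighbors of $u_1$ and $u_2$, producing a $K_{2,3}$; if $x_1=x_2$ then $F_1$ and $F_2$ share a boundary cycle, so by the Jordan curve theorem that cycle is all of $H$, forcing $H=C_4$. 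Both alternatives contradict the hypotheses.

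The one step that requires a touch of planar-topological care is the observation that in the degree-$2$ case the two faces incident to $v$ are bounded by $4$-cycles of the stated form, with the same two vertices $u_1,u_2$ appearing as the neighbors of $v$ on each cycle; once this is granted, everything reduces to a purely combinatorial case check against the $K_{2,3}$-free and $H\neq C_4$ hypotheses. Since this local observation is a routine consequence of the cyclic ordering of edges at $v$ in the embedding, I expect the entire proof to be quite short.
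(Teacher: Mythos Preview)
Your argument is correct and follows essentially the same route as the paper's own proof: rule out degrees $0$ and $1$ immediately, and for a degree-$2$ vertex identify the two incident faces, write their boundaries as $4$-cycles through $v$ and both of its neighbours, and then split on whether the two ``fourth vertices'' coincide (yielding $H=C_4$) or not (yielding a $K_{2,3}$). The only cosmetic difference is that the paper phrases the local analysis in terms of the two faces incident to one of the edges at the degree-$2$ vertex rather than the two faces incident to the vertex itself, which amounts to the same thing.
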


\begin{proof}
Suppose not. As $H$ is a quadrangulation, it is clear that it has no vertex of degree zero or one. So, let $y$ be a vertex of degree two and let $x$ and $z$ be its two neighbours. Since $H$ is a quadrangulation and $y$ has degree two, the faces $f_1$ and $f_2$ incident to the edge $xy$ must be incident to $x,y,z$. For $i\in\{1,2\}$, let $w_i$ be the fourth vertex on the boundary of $f_i$. If $w_1\neq w_2$, then $x$ and $z$ have three distinct common neighbours, contradicting the fact that $H$ is $K_{2,3}$-free. On the other hand, if $w_1$ and $w_2$ coincide, then $f_1$ and $f_2$ have the same boundary which implies that $H$ is a $4$-cycle, and is again a contradiction.
\end{proof}

The following definitions are useful for stating the next lemma. 

\begin{defn}
A vertex $v$ of a graph $G$ is \emph{frozen} by an $F$-colouring $f$ if $g(v)=f(v)$ for every $F$-colouring $g\in \Hom(G,F)$ which reconfigures to $f$. 
\end{defn}

\begin{defn}
An $F$-colouring $f$ of a graph $G$ is said to be \emph{frozen} if every vertex of $G$ is frozen by $f$. 
\end{defn}

\begin{defn}
Say that a graph $F$ is \emph{stiff} if the identity map on $V(F)$ is a frozen $F$-colouring of $F$. 
\end{defn}

\begin{obs}
\label{nbDom}
A graph $F$ is not stiff if and only if there exists distinct vertices $u,v\in V(F)$ such that $N(u)\subseteq N(v)$. 
\end{obs}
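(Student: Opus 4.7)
The plan is to unwind the definitions. By definition, $F$ is not stiff iff some homomorphism $g\in\Hom(F,F)$ with $g\ne\operatorname{id}_F$ reconfigures to $\operatorname{id}_F$. Since a reconfiguration sequence consists of single-vertex recolourings, such a $g$ exists iff $\operatorname{id}_F$ has a neighbour in the reconfiguration graph; equivalently, iff there exists $g\in\Hom(F,F)$ which agrees with $\operatorname{id}_F$ on all but a single vertex $u$, with $g(u)=v\ne u$. So the whole statement collapses to a one-step analysis at $\operatorname{id}_F$.

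For the direction ($\Leftarrow$), given distinct $u,v$ with $N(u)\subseteq N(v)$, I would define $g$ by $g(u)=v$ and $g(w)=w$ for $w\ne u$, and check that $g\in\Hom(F,F)$: each edge of $F$ disjoint from $u$ is preserved, while any edge $uw$ is sent to $vw$, which lies in $E(F)$ by the neighbourhood containment. Since $g$ differs from $\operatorname{id}_F$ only at $u$, the one-step recolouring $g\to\operatorname{id}_F$ witnesses that $u$ is not frozen by $\operatorname{id}_F$, so $F$ is not stiff.

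For ($\Rightarrow$), I would take any reconfiguration sequence from some $g\ne\operatorname{id}_F$ to $\operatorname{id}_F$ and consider its last step. It produces $\operatorname{id}_F$ from a homomorphism $g'$ which agrees with $\operatorname{id}_F$ everywhere except at a single vertex $u$, where $g'(u)=v\ne u$. The homomorphism condition applied to each edge $uw$ of $F$ forces $vw=g'(u)g'(w)\in E(F)$, i.e.\ $w\in N(v)$; this says $N(u)\subseteq N(v)$, as required, and $u\ne v$ by construction.

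I do not foresee any real obstacle; the argument is essentially tautological once the definitions have been parsed. The only minor technicality is to confirm that the one-step reconfiguration $g\leftrightarrow\operatorname{id}_F$ used in both directions remains legal if $F$ carries loops, since the reflexive reconfiguration model additionally requires the old and new colours at the recoloured vertex to be adjacent. This is automatic: if $u$ is reflexive then $u\in N(u)\subseteq N(v)$, so $uv\in E(F)$, which is exactly the missing condition.
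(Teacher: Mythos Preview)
Your proof is correct and is precisely the unwinding of definitions that the paper intends; the paper itself states this as an \emph{Observation} with no proof. Your handling of the reflexive case (checking both that the loop at $u$ is preserved since $v$ is reflexive, and that the extra adjacency condition $uv\in E(F)$ follows from $u\in N(u)\subseteq N(v)$) is exactly the verification one would want.
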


\begin{lem}
\label{idFrozen}
$H$ is stiff. 
\end{lem}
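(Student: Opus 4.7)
The plan is to apply Observation~\ref{nbDom} and derive a contradiction from the $K_{2,3}$-freeness of $H$, together with the minimum degree bound from Lemma~\ref{minDegis3}.

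First, I would argue by contrapositive via Observation~\ref{nbDom}: suppose for contradiction that $H$ is not stiff, so there exist distinct vertices $u,v\in V(H)$ with $N(u)\subseteq N(v)$. The goal is to exhibit a copy of $K_{2,3}$ with $u$ and $v$ on one side.

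Next, I would rule out the case $u\sim v$. Since $H$ is irreflexive (quadrangulations in this paper have no loops), if $u$ were adjacent to $v$, then $v\in N(u)\subseteq N(v)$ would force a loop at $v$, a contradiction. Hence $u$ and $v$ are non-adjacent.

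Finally, combining $N(u)\subseteq N(v)$ with Lemma~\ref{minDegis3}, which gives $|N(u)|\geq 3$, we see that $u$ and $v$ share at least three common neighbours $w_1,w_2,w_3$. Since $u$ and $v$ are non-adjacent and distinct, the six edges $uw_i, vw_i$ (for $i=1,2,3$) induce a subgraph isomorphic to $K_{2,3}$, contradicting the hypothesis on $H$. This contradiction completes the proof.

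The argument is short and the main (minor) subtlety is remembering to handle the $u\sim v$ case using irreflexivity before invoking $K_{2,3}$-freeness; otherwise everything reduces immediately to Observation~\ref{nbDom} and Lemma~\ref{minDegis3}.
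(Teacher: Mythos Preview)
Your proof is correct and follows essentially the same approach as the paper: assume non-stiffness, apply Observation~\ref{nbDom} to obtain $N(u)\subseteq N(v)$, invoke Lemma~\ref{minDegis3} for $|N(u)|\ge 3$, and derive a $K_{2,3}$. Your explicit handling of the case $u\sim v$ is a minor detail the paper leaves implicit.
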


\begin{proof}
Suppose not. Then, by Observation~\ref{nbDom}, there exists $u,v\in V(H)$ such that $N(u)\subseteq N(v)$. By Lemma~\ref{minDegis3}, $u$ has degree at least three. Thus, $u$ and $v$ have at least three distinct common neighbours, which contradicts the fact that $H$ is $K_{2,3}$-free. 
\end{proof}

\subsection{Not-Both-One Gadget for Quadrangulations}

The basic idea underlying the proof of Lemma~\ref{nboLem} is that if we take a homomorphism of a path $P=y_1\cdots y_m$ to the ``ladder'' graph $L$ as in Figure~\ref{fig:nbo} such that the vertex $y_i$ is constrained to map to $c_i$ or $d_i$, then the pair  $(y_1, y_m)$ can map only to one of $(d_1, d_m)$,  $(d_1, c_m)$ or $(c_1,c_m)$, and one can easily reconfigure between such patterns. As we will see, constraining $y_i$ to map to $c_i$ or $d_i$ is easy using frozen colourings and $K_{2,3}$-freeness. The harder part is finding an image of the graph $L$ in $H$ with vertices $(c_1,d_1,c_m,d_m) = (a_1,a_0,b_0,b_1)$. This takes most of the subsection, and is done with the directed graph $\Phi$ defined in Definition~\ref{PhiQuad}. Note that a path in the directed graph $\Phi$ corresponds to an image of the graph $L$ in $H$, stronger than a homographic image: we also insist pairs in $L$ connected by dashed edges map to vertices that are across from one another on a face of $H$.

\begin{figure}[htbp]
\begin{center}
\begin{tikzpicture}
   \newdimen\R
   \R=1.65cm
   \newdimen\smallR
   \smallR=1.0cm
   
   \draw\foreach \x in {1,2,3} {
     (1+2*\x,3) node [smallblack, label={[label distance=0pt]90:{$y_{\x}$}}] (u\x){}
   };
   \draw (7.5,3) node (u35){};
   \draw (8.5,3) node (u45){};
   \draw (9,3) node [smallblack, label={[label distance=0pt]90:{$y_{m}$}}] (u5){};
   \draw(u1)--(u2)--(u3)--(u35);
   \draw(8,3) node (){$\cdots$};
   \draw(u45)--(u5);
   \draw\foreach \x in {1,2,3,4,5} {
     (2*\x,2) node [smallblack] (d\x){}
     (2*\x,0) node [smallblack] (c\x){}
   };
   \draw (8.5,2) node (d45){};
   \draw(9,2) node (){$\cdots$};
   \draw (9.5,2) node (d55){};
   \draw (6.5,0) node (c35){};
   \draw(7,0) node (){$\cdots$};
   \draw (7.5,0) node (c45){};
   \draw(8,1) node (){$\cdots$};
   \draw(d1)--(d2)--(d3)--(d4)--(d45);\draw(d55)--(d5);
   \draw(c1)--(c2)--(c3)--(c35);\draw(c45)--(c4)--(c5);
   \foreach [count=\n from 1] \x in {2,3,4,5} {
     \draw[dashed](c\n)--(d\x); };
   \draw\foreach \x in {1,2,3,5}{(c\x)--(d\x)};
   \draw (8,.5) node (cc){};
   \draw (8,1.5) node (dd){};
   \draw (c4)--(cc);
   \draw (dd)--(d4);
   \draw\foreach [count=\n from 1] \x in {1,2,3,m} {
     (2*\n+2.3,1.7) node (){$d_\x$}
     (2*\n-.3,.3) node (){$c_\x$}
     };
     \draw(1,1) node (){$L$};
     \draw(2,4) node (){$P$};
\end{tikzpicture}
\end{center}

\caption{Graphs $P$ and $L$ motivating $\Phi$ of Defintion \ref{PhiQuad}.}
\label{fig:nbo}
\end{figure}
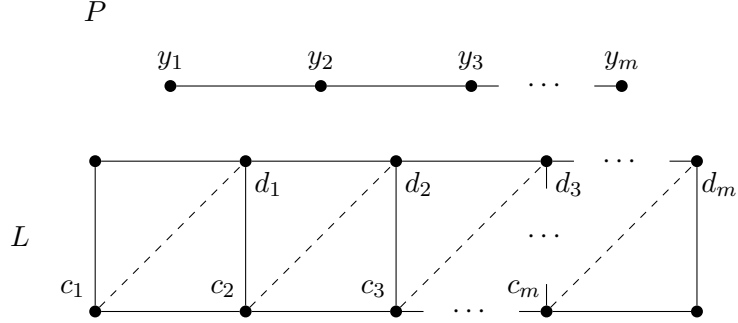
   
We start with the following definitions.

\begin{defn}
Let $A\subseteq V(H)^2$ be the set of all ordered pairs $(a,b)$ such that $a$ is across from $b$.
\end{defn}

Note that because $H$ is $K_{2,3}$-free, there is a unique face incident to both $a$ and $b$ for
any pair $(a,b) \in A$.

\begin{defn}
\label{PhiQuad}
Let $\Phi$ be the directed graph with vertex set $A$ where there is an arc from $(a,b)$ to $(c,d)$ if $ac,bc,bd\in E(H)$ and $ad\notin E(H)$. 
\end{defn}

Figure \ref{fig:nbo} shows a `path' in $\Phi$ from $(c_1,d_1)$ to $(c_m,d_m)$.  Observe that 
the following is immediate from the definition of $\Phi$. 

\begin{obs}
\label{reverse}
There is an arc from $(a,b)$ to $(c,d)$ in $\Phi$ if and only if there is an arc from $(d,c)$ to $(b,a)$ in $\Phi$.
\end{obs}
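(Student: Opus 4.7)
The plan is to simply unpack both sides of the equivalence and observe that they amount to the same set of edge constraints on $H$, after noting that the ``across from'' relation is symmetric.

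First I would verify that $A$ is itself symmetric under swapping coordinates. By definition, $(a,b)\in A$ iff $a$ and $b$ are non-adjacent and share an incident face; both conditions are manifestly symmetric in $a$ and $b$, so $(a,b)\in A$ iff $(b,a)\in A$. In particular, $(a,b),(c,d)\in A$ if and only if $(b,a),(d,c)\in A$.

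Next I would rewrite the two arc conditions side by side. By Definition~\ref{PhiQuad}, there is an arc from $(a,b)$ to $(c,d)$ in $\Phi$ precisely when $(a,b),(c,d)\in A$, the edges $ac,bc,bd$ lie in $E(H)$, and $ad\notin E(H)$. Dually, there is an arc from $(d,c)$ to $(b,a)$ in $\Phi$ precisely when $(d,c),(b,a)\in A$, the edges $db,cb,ca$ lie in $E(H)$, and $da\notin E(H)$. Since $H$ is undirected, $\{ca,cb,db\}=\{ac,bc,bd\}$ as subsets of $E(H)$ and $da=ad$, so the edge/non-edge conditions agree term by term. Combined with the symmetry of $A$, this shows that the two arcs exist under exactly the same hypotheses.

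There is no real obstacle: the observation is a direct consequence of the undirectedness of $H$ and the symmetry of the ``across'' relation, which is why the paper flags it as immediate.
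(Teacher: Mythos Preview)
Your proposal is correct and matches the paper's approach: the paper simply states that the observation is immediate from the definition of $\Phi$, and your unpacking of the symmetry of $A$ together with the undirectedness of $H$ is exactly the (trivial) verification the paper leaves implicit.
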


The following lemma highlights the utility of $\Phi$ in proving Lemma~\ref{nboLem}.

\begin{lem}
\label{pathImpliesGadget}
A directed path from $(a_1,a_0)$ to $(b_0,b_1)$ in $\Phi$ yields a $\{(a_0,b_0),(a_1,b_0),(a_0,b_1)\}$-gadget.
\end{lem}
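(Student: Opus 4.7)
The plan is to turn the hypothesized directed path into a ``ladder'' gadget in which the two signal vertices realize exactly the three allowed patterns. Write the path as $(c_1, d_1) = (a_1, a_0), (c_2, d_2), \ldots, (c_m, d_m) = (b_0, b_1)$, so that by the definition of an arc in $\Phi$ we have $c_i c_{i+1}, d_i c_{i+1}, d_i d_{i+1} \in E(H)$ while $c_i d_{i+1} \notin E(H)$ for $1 \le i < m$, and by definition of $A$ each pair $c_i, d_i$ lies across a common face $f_i$ of $H$. I would build $Y$ as a path $y_1 \cdots y_m$ with signal vertices $x_1 = y_1$ and $x_2 = y_m$, and then pin each $y_i$ into the two-element set $\{c_i, d_i\}$ as follows. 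Letting $u_i, v_i$ be the two other vertices on $f_i$, the hypothesis that $H$ is $K_{2,3}$-free forces $N(u_i) \cap N(v_i) = \{c_i, d_i\}$ (the containment $\supseteq$ is immediate from $f_i$, and strict inequality would exhibit a $K_{2,3}$). So I attach two new vertices $u_i', v_i'$ adjacent to $y_i$, each with its own disjoint copy of $H$ glued on, where $u_i'$ is identified with the $u_i$-vertex of its copy and $v_i'$ with the $v_i$-vertex of its copy; the canonical colorings will restrict to the identity on each attached copy, so $u_i' \mapsto u_i$ and $v_i' \mapsto v_i$. By stiffness (Lemma~\ref{idFrozen}) together with Observation~\ref{nbDom}, no single-vertex change of the identity on a copy of $H$ is itself a homomorphism, so in every $H$-coloring of $Y$ reachable from a canonical coloring each attached copy stays at identity; this forces $y_i \in N(u_i) \cap N(v_i) = \{c_i, d_i\}$.

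For conditions (a) and (b) of Definition~\ref{patternDef}, define canonical colorings by $\zeta_{(a_1, b_0)}(y_i) := c_i$, $\zeta_{(a_0, b_1)}(y_i) := d_i$ for all $i$, and $\zeta_{(a_0, b_0)}(y_i) := d_i$ for $i < m$ together with $y_m \mapsto c_m$ (valid since $d_{m-1} c_m \in E(H)$), with all attached copies fixed at the identity. Since each $y_i$ is constrained to $\{c_i, d_i\}$ in any $\psi$ reachable from some $\zeta_p$, and the forbidden non-edge $c_i d_{i+1}$ prevents a sequence $(y_1, \ldots, y_m)$ from switching out of the $c$-track back into the $d$-track, the only feasible signal pairs are $(c_1, c_m) = (a_1, b_0)$, $(d_1, d_m) = (a_0, b_1)$, and any mixed $d$-then-$c$ sequence, which gives $(d_1, c_m) = (a_0, b_0)$; thus $\psi(x_1, x_2) \in \P$.

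Condition (c) is witnessed by sliding the cliff between the $d$- and $c$-tracks. To reconfigure $\zeta_{(a_0, b_0)}$ to $\zeta_{(a_1, b_0)}$, flip $y_{m-1}, y_{m-2}, \ldots, y_1$ in turn from $d_i$ to $c_i$; each flip is a valid homomorphism change because $c_i$ is adjacent to the preceding vertex (still $d_{i-1}$, using the arc edge $d_{i-1} c_i$), to the succeeding vertex (already flipped to $c_{i+1}$), and to the pinned $u_i, v_i$. Throughout this process $(y_1, y_m)$ stays $(a_0, b_0)$ and becomes $(a_1, b_0)$ only at the final flip, as required. The transition $\zeta_{(a_0, b_0)} \to \zeta_{(a_0, b_1)}$ is the single flip $y_m : c_m \to d_m$, which is valid since $d_{m-1} d_m \in E(H)$. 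The main hurdle is verifying the pinning step, which rests jointly on $K_{2,3}$-freeness (for the identification $N(u_i) \cap N(v_i) = \{c_i, d_i\}$) and stiffness of $H$; once these are in place, the remainder is straightforward bookkeeping on the ladder.
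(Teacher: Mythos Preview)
Your argument is correct and follows essentially the same ladder construction as the paper. The only differences are cosmetic: the paper uses a single frozen copy $H^*$ of $H$ to which every $y_i$ is attached (rather than $2m$ separate copies), and it places the ``cliff'' in $\zeta_{(a_0,b_0)}$ at the first step ($y_1\mapsto d_1$, $y_i\mapsto c_i$ for $i\geq 2$) rather than at the last; neither choice affects the logic.
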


\begin{proof}
Let $(c_1,d_1),\dots,(c_m,d_m)$ be a directed path in $\Phi$ with $(c_1,d_1)=(a_1,a_0)$ and $(c_m,d_m)=(b_0,b_1)$. We initiate the construction of the gadget $Y(x_1,x_2)$ with  a copy $H^*$ of $H$ where the vertex of $H^*$ corresponding to a vertex $v\in V(H)$ is denoted by $v^*$. We then add a path $y_1\cdots y_m$ and edges from $y_i$ to the copies of the two common neighbours of $c^*_i$ and $d^*_i$ in $H^*$ for $1\leq i\leq m$. Define the signal vertex $x_1$ to be $y_1$ and the signal vertex $x_2$ to be $y_m$. Define the mappings $\zeta_{(a_0,b_0)}, \zeta_{(a_1,b_0)}$ and $\zeta_{(a_0,b_1)}$ so that the copy of $H^*$ is coloured by the identity map $v^*\mapsto v$ and the path $y_1\cdots y_m$ is coloured by 
\[\zeta_{(a_0,b_0)}(y_i):=\begin{cases}d_i & \text{if }i=1,\\
c_i & \text{otherwise}.\end{cases}\]
\[\zeta_{(a_1,b_0)}(y_i) := c_i\text{ for }1\leq i\leq m,\]
\[\zeta_{(a_0,b_1)}(y_i):=d_i\text{ for }1\leq i\leq m.\]
Note that these mappings are indeed $H$-colourings of $Y$ by definition of $\Phi$ and that they satisfy condition (\ref{patterns}) of Definition~\ref{patternDef}. 

We observe that, by Lemma~\ref{idFrozen}, every vertex of $H^*$ is frozen by each of the homomorphisms $\zeta_p$. Thus, if $\psi$ reconfigures to $\zeta_p$ for some $p$, then $\psi(v^*)=v$ for each $v\in V(H)$. Since $H$ is $K_{2,3}$-free, the two vertices of $H$ adjacent to both $c_i$ and $d_i$ do not have a third common neighbour, and so we get that $\psi(y_i)\in \{c_i,d_i\}$. In particular, $\psi(y_1)\in\{a_0,a_1\}$ and $\psi(y_m)\in \{b_0,b_1\}$. So, to verify condition (\ref{stuck}) of Definition~\ref{patternDef}, we need only to show that it cannot be the case that $\psi(y_1)=a_1$ and $\psi(y_m)=b_1$. If we have $\psi(y_1)=a_1 = c_1$, then we must have $\psi(y_2)=c_2$ because $c_1$ is not adjacent to $d_2$ (by construction of $\Phi$). Repeating the same argument, we get $\psi(y_3)=c_3$, $\psi(y_4)=c_4$, and so on. In particular, we must have $\psi(y_m)=c_m = b_0$, as desired.

Finally, we check condition (\ref{patternTransition}) of Definition~\ref{patternDef}. By symmetry of reconfiguration sequences, it suffices to consider $p=(a_0,b_0)$ and $q=(a_1,b_0)$ or $(a_0,b_1)$. The former case is trivial as $\zeta_{(a_0,b_0)}$ and $\zeta_{(a_1,b_0)}$ differ only on $y_1$. In the latter case, we start with $\zeta_{(a_0,b_0)}$ and change the colours of each $y_i$ for $2\leq i\leq m$, one by one, from $c_i$ to $d_i$. Each of the intermediate mappings is an $H$-colouring of $Y$ which differs from the previous one on a unique vertex. After all of these changes have been made, we arrive at $\zeta_{(a_0,b_1)}$. Also, every $H$-colouring in this sequence maps $x_1=y_1$ to $a_0$ and $x_2=y_m$ to either $b_0$ or $b_1$, as required. This completes the proof. 
\end{proof}

Therefore, our goal in proving Lemma~\ref{nboLem} will be to show that there is a directed path in $\Phi$ between any two elements of $A$. To this end, we build up further useful properties of $\Phi$.

\begin{lem}
\label{inOut}
Every vertex of $\Phi$ has in-degree and out-degree equal to $2$. 
\end{lem}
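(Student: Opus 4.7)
The plan is to count out-neighbours of an arbitrary vertex $(a,b) \in V(\Phi)$ directly from the definition, and then use Observation~\ref{reverse} to reduce in-degree to out-degree.

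First I would exploit the $K_{2,3}$-free hypothesis to pin down the common neighbourhood of $a$ and $b$. Because $(a,b) \in A$, the vertices $a$ and $b$ are non-adjacent and lie together on a unique face $F$ of $H$, say $F = a x b y$. Thus $x$ and $y$ are common neighbours of $a$ and $b$, and by $K_{2,3}$-freeness these are the \emph{only} common neighbours of $a$ and $b$ in $H$.

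Next I would count out-neighbours. Suppose $(a,b) \to (c,d)$ is an arc. The conditions $ac, bc \in E(H)$ force $c$ to be a common neighbour of $a$ and $b$, so $c \in \{x, y\}$. Fix such a $c$. Since $bc \in E(H)$ and $H$ is a quadrangulation, exactly two faces contain the edge $bc$: the face $F$ itself, and a second $4$-face which I write in cyclic order as $b c w d$. The vertex $d$ is then the unique vertex of $H$ that is across from $c$ (on this second face) and adjacent to $b$; no other candidate exists because $(c,d) \in A$ demands that $c$ and $d$ be across on a common face, and the only face containing $bc$ other than $F$ is the one just described. It therefore remains to verify the non-edge condition $ad \notin E(H)$. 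The main obstacle is ruling this out, and I would do so by contradiction using $K_{2,3}$-freeness: if $ad \in E(H)$, then $d$ would be a common neighbour of $a$ and $b$, forcing $d \in \{x, y\}$; but $d \neq c$ and $d$ is non-adjacent to $c$, while both $x$ and $y$ are adjacent to $c$ (with one of them being $c$ itself), which is impossible. Hence $ad \notin E(H)$ and $(a,b) \to (c,d)$ is indeed an arc. This produces exactly one out-neighbour for each choice $c \in \{x,y\}$, giving out-degree exactly $2$.

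For the in-degree, I would apply Observation~\ref{reverse}: the in-neighbours of $(a,b)$ are in bijection with the out-neighbours of $(b,a)$ via the map $(p,q) \mapsto (q,p)$. Since $(b,a) \in A$, the argument above applies to it and yields out-degree $2$, so $(a,b)$ has in-degree $2$ as well. This completes the plan.
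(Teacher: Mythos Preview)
Your overall strategy matches the paper's: pin down $c$ as a common neighbour of $a$ and $b$ via $K_{2,3}$-freeness, find $d$ on the second face incident to $bc$, and deduce the in-degree statement from Observation~\ref{reverse}. However, two steps do not go through as written.

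First, your justification that $ad\notin E(H)$ is flawed. You claim that ``both $x$ and $y$ are adjacent to $c$'', but this is false: with $F=axby$ the vertices $x$ and $y$ are \emph{across} from one another on $F$, and since a spherical quadrangulation is bipartite they lie in the same bipartition class and are therefore non-adjacent. So if, say, $c=x$, then $y$ is not adjacent to $c$ and your contradiction evaporates. The paper handles this step by invoking Lemma~\ref{nonAdj} on the two $4$-cycles bounding $F$ and $f'$, which share the edge $bc$. A direct repair in your framework: if $ad\in E(H)$ then $d\in\{x,y\}$ and $d\neq c$, so $d$ is the other of $x,y$; but then $c$ and $d$ are across on two distinct faces ($F$ and $f'$), contradicting the uniqueness of such a face guaranteed by $K_{2,3}$-freeness.

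Second, your uniqueness argument for $d$ has a genuine gap. You assert that any face witnessing $(c,d)\in A$ must contain the edge $bc$, but the hypotheses only give $bd\in E(H)$; there is no a priori reason $b$ should lie on that face. The paper addresses precisely this point: if some $d'\neq d$ also worked, let $f''$ be the face on which $c$ and $d'$ are across, with remaining vertices $p,q$. Then $c$ and $d'$ are both adjacent to each of $b,p,q$, so $K_{2,3}$-freeness forces $b\in\{p,q\}$, which places the edge $bc$ on $f''$; but now $bc$ lies on three distinct faces $F,f',f''$, impossible in a planar embedding. Your argument skips this $K_{2,3}$ step entirely.
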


\begin{proof}
Let $(a,b)$ be a vertex of $\Phi$ and let $f$ be the face of $H$ incident to $a$ and $b$. If there is an arc from $(a,b)$ to $(c,d)$, then $c$ must be adjacent to both $a$ and $b$. Thus, since $H$ is $K_{2,3}$-free, there are precisely two choices for $c$; namely, the two other vertices incident to $f$. Now, given a choice of $c$, we may let $f'$ be the unique face with $f'\neq f$ such that the $f'$ is incident to $bc$. Letting $d$ be the non-neighbour of $c$ on the boundary of $f'$, we have that $a$ is not adjacent to $d$ by Lemma~\ref{nonAdj} and so there is an arc from $(a,b)$ to $(c,d)$ in $\Phi$. 

The argument above shows that the out-degree of $(a,b)$ is at least two. To prove that it is exactly two, we need to show that the choice of $d$ is unique once $c$ has been chosen. If not, let $d'\neq d$ be a vertex which is across from $c$, adjacent to $b$ and not adjacent to $a$. Let $f''$ be the face whose boundary contains $c$ and $d'$ and note that $f''\notin\{f,f'\}$ as $d'$ is not adjacent to $a$ and is not equal to $d$. Let $x,y$ be the other two vertices on the boundary of $f''$. Then $c$ and $d'$ are adjacent to all three of $b,x$ and $y$ and so, since $H$ is $K_{2,3}$-free, these three vertices cannot be distinct. Since $x\neq y$, we get that, without loss of generality, $b=x$. However, we now have that the edge $bc$ is on the boundary of three distinct faces, namely $f,f'$ and $f''$, contradicting the fact that $H$ is a planar quadrangulation and completing the proof.

Finally, to see that the in-degree of each vertex is also equal to two, we simply use the fact that the out-degree of every vertex is two and apply Observation~\ref{reverse}. 
\end{proof}

The following definition will be helpful in further analysing $\Phi$. 

\begin{defn}
  Let $\Gamma$ be the graph with vertex set $A$ where $(a,b)$ is adjacent to $(c,d)$ if
\begin{enumerate}[(a)]
\item the face $f$ incident to $a$ and $b$ is distinct from the face $f'$ incident to $c$ and $d$, 
\item there is exactly one edge incident to both $f$ and $f'$, 
\item $a$ is adjacent to $c$, and 
\item $b$ is adjacent to $d$. 
\end{enumerate}
\end{defn}

From this definition, if is clear that for  $(a,b)$ in $A$, if the face $f$ incident to $a$ and $b$ shares
  an edge with a face $f'$, then there is a pair $(c,d)$ $A$ with $f'$ incident to $c$ and $d$,
  such that $(a,b)$ is adjacent to $(c,d)$ in $\Gamma$.

As it turns out, the undirected graph underlying $\Phi$ is precisely $\Gamma$. 

\begin{lem}
\label{underlying}
Let $(a,b),(c,d)\in A$. Then $(a,b)$ is adjacent to $(c,d)$ in $\Gamma$ if and only if there is an arc from $(a,b)$ to $(c,d)$ or an arc from $(c,d)$ to $(a,b)$ in $\Phi$. 
\end{lem}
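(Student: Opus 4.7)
I will prove the two directions separately. For $(\Leftarrow)$, assume without loss of generality there is an arc from $(a,b)$ to $(c,d)$ in $\Phi$; the case of an arc from $(c,d)$ to $(a,b)$ reduces to this by Observation~\ref{reverse} together with the fact that $\Gamma$-adjacency is preserved under the simultaneous swap $(a,b)\mapsto(b,a)$, $(c,d)\mapsto(d,c)$. The conditions $ac, bc \in E(H)$ make $c$ a common neighbour of $a$ and $b$, and $K_{2,3}$-freeness forces $c$ to lie on the face $f$ incident to both; write the boundary of $f$ cyclically as $a, c, b, y$. Let $f'$ be the face distinct from $f$ containing edge $bc$. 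Using $b \sim d$, $a \not\sim d$, and the fact that $c$ is across from $d$, $d$ must lie on $f'$ and be across from $c$ there. All four conditions of $\Gamma$-adjacency are then immediate except uniqueness of the shared edge $bc$; any second shared edge would yield three common neighbours of some pair, hence a $K_{2,3}$.

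For $(\Rightarrow)$, write the boundary of $f$ cyclically as $a, p, b, q$ and split into cases on which of the four edges $ap, pb, bq, qa$ is the shared edge with $f'$. The symmetries $a \leftrightarrow b$ and $p \leftrightarrow q$ reduce this to two cases: shared edge $pb$, and shared edge $ap$. For the case $pb$, write the boundary of $f'$ cyclically as $p, b, u, v$; then $(c,d)$ must be one of $(p,u), (u,p), (b,v), (v,b)$. I will rule out $(b,v)$ (because $c = b$ is across from $a$), $(u,p)$ (because $u \sim a$ would make $u$ a third common neighbour of $a$ and $b$ unless $u = q$, in which case $bq$ becomes a second shared edge), and $(v,b)$ (because $v \sim a$ together with $v \sim p$ creates a triangle, forbidden by bipartiteness of the sphere quadrangulation $H$). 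The remaining option $(c,d) = (p,u)$ then satisfies the four defining conditions of a $\Phi$-arc from $(a,b)$ to $(p,u)$ by direct inspection. The case of shared edge $ap$ is symmetric; writing the boundary of $f'$ as $a, p, s, t$, the same eliminations force $(c,d) = (t,p)$, and the resulting $\Phi$-arc is oriented from $(c,d)$ to $(a,b)$.

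I expect the forward direction's case analysis to be the main obstacle, since ruling out the three invalid candidates for $(c,d)$ requires the right interplay of $K_{2,3}$-freeness, bipartiteness of $H$ (which kills triangles), and the single-shared-edge hypothesis. A pleasant consequence of the analysis is that the orientation of the resulting $\Phi$-arc depends only on whether the shared edge is incident to $a$ or to $b$; this pairing matches Lemma~\ref{inOut}, as the two edges of $f$ incident to $a$ correspond to the in-arcs at $(a,b)$ and the two incident to $b$ correspond to the out-arcs.
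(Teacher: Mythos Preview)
Your proof is correct; both directions go through as you outline. The backward direction $(\Leftarrow)$ is essentially the paper's argument, organised slightly differently (you fix $f'$ as the second face on edge $bc$ and then argue $d$ lies there; the paper fixes $f'$ as the face of $(c,d)$ and argues $b$ lies there---same content).

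The forward direction $(\Rightarrow)$ is where you diverge. You run an explicit case analysis on which edge of $f$ is shared and then on which across-pair of $f'$ equals $(c,d)$, eliminating three of four candidates in each case. The paper instead argues more conceptually: from $a\sim c$ and $b\sim d$ it deduces $\{a,b\}\cap\{c,d\}=\emptyset$, so exactly one vertex from each pair lies on the shared edge $e$; a single invocation of Lemma~\ref{nonAdj} then shows the two off-edge vertices are non-adjacent, forcing $e\in\{bc,ad\}$ and giving the required $\Phi$-arc directly. This bypasses your case split entirely and never needs bipartiteness of $H$ explicitly. Your route is more hands-on and recovers the extra structural observation you note at the end (that the orientation of the $\Phi$-arc is determined by whether the shared edge meets $a$ or $b$), but the paper's argument is shorter and makes the role of Lemma~\ref{nonAdj} more transparent. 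One small cleanup: your elimination of $(c,d)=(v,b)$ via a triangle is valid but unnecessary, since $b\sim d=b$ already fails by irreflexivity.
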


\begin{proof}
First, suppose that $(a,b)$ is adjacent to $(c,d)$ in $\Gamma$. Let $f$ and $f'$ be the faces of $H$ incident to $a,b$ and $c,d$, respectively, and let $e$ be the common edge of the boundaries of $f$ and $f'$. Note that the fact that $a$ is adjacent to $c$ and $b$ is adjacent to $d$ implies that $\{a,b\}\cap \{c,d\}=\emptyset$. Thus, we must have that exactly one of $c$ or $d$ is incident with $e$ and, likewise, exactly one of $a$ or $b$ is incident with $e$. By Lemma~\ref{nonAdj}, the vertex of $\{a,b\}$ that is not incident with $e$ cannot be adjacent to the vertex of $\{c,d\}$ that is not incident with $e$. Thus, by definition of $\Gamma$, this pair cannot be $a$ and $c$, nor can it be $b$ and $d$. On the other hand, it is easily checked (using Lemma~\ref{nonAdj}) that if $e=bc$, then there is an arc from $(a,b)$ to $(c,d)$ in $\Phi$ and, if $e=ad$, then there is an arc from $(c,d)$ to $(a,b)$ in $\Phi$. 

Now, for the other direction, suppose, without loss of generality, that there is an arc from $(a,b)$ to $(c,d)$ in $\Phi$. Let $f$ be the face incident to $a,b$ and $f'$ be the  face incident to $c,d$. Since $a$ is adjacent to $c$ but neither $c$ nor $a$ is adjacent to $d$, we must have $f\neq f'$. Also, since $H$ has no $K_{2,3}$, it must be the case that $c$ is incident to $f$ and $b$ is incident to $f'$. Thus, the boundaries of $f$ and $f'$ share the edge $bc$ and, by Lemma~\ref{nonAdj}, this is the unique such edge. So, we can conclude that $(a,b)$ is adjacent to $(c,d)$ in $\Gamma$. 
\end{proof}

Next, we prove that $\Gamma$ is connected. Note that there are some subtleties here to be aware of. In particular, the fact that every finite quadrangulation has a vertex of degree at most three will be crucial. For infinite quadrangulations of the plane, the graph $\Gamma$ is not connected in general; e.g. if $H$ were the infinite square grid, then $\Gamma$ would contain exactly four connected components, each of which is itself isomorphic to an infinite square grid. Also, the graph $\Gamma$ can sometimes be disconnected if $H$ is a finite quadrangulation of the torus; consider, for example, the Cartesian product of two cycles. Thus, in some sense, the structure of the sphere (in particular, the fact that it has positive curvature) is important for our proof to go through.

\begin{lem}
\label{GammaConn}
$\Gamma$ is connected. 
\end{lem}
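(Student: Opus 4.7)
The plan is to combine two facts: first, that the dual graph $H^*$ of $H$ is connected, so that $\Gamma$ has at most four connected components; and second, that the degree-three local structure around the vertex $0$ merges these four potential components into one.

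First, I will observe that if two faces $f,f'$ of $H$ share an edge $e$, then the argument in the proof of Lemma~\ref{underlying} actually produces four $\Gamma$-edges spanning $e$, bijectively matching the four ordered diagonals of $f$ with those of $f'$. Since $H$ is connected and embedded on the sphere, the dual graph $H^*$ is connected, so by lifting dual-graph walks to $\Gamma$-walks, every element of $A$ is $\Gamma$-connected to one of the four ordered diagonals of a fixed base face $f_1$ containing the vertex $0$. Thus $\Gamma$ has at most four components.

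Second, using the cyclic local structure around $0$ from Figure~\ref{around0}, the three faces $f_1, f_2, f_3$ around $0$ form a $3$-cycle in $H^*$ via the shared edges $0\alpha_{1,2}, 0\alpha_{2,3}, 0\alpha_{3,1}$. I will trace the $\Gamma$-walk around this dual $3$-cycle, starting at the ordered diagonal $(0,1)$ of $f_1$ and applying at each step the transition rule derived in the proof of Lemma~\ref{underlying} (with a relabelling of the current face so that the shared edge plays the role of $a_1a_2$). A direct computation yields the six-step walk
\[
(0,1)\to(\alpha_{2,3},\alpha_{1,2})\to(3,0)\to(\alpha_{3,1},\alpha_{1,2})\to(0,2)\to(\alpha_{3,1},\alpha_{2,3})\to(1,0),
\]
so $(0,1)$ and $(1,0)$ lie in the same $\Gamma$-component; continuing around the $3$-cycle another three steps, the walk reaches $(\alpha_{1,2},\alpha_{3,1})$, which together with the intermediate $(\alpha_{3,1},\alpha_{1,2})$ above accounts for the last two of the four ordered diagonals of $f_1$. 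Hence all four ordered diagonals of $f_1$ lie in one component of $\Gamma$, and combined with the first fact this shows $\Gamma$ is connected.

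The main obstacle is executing the walk-around-$0$ computation without error: each of the steps requires identifying which of the four transition cases extracted from the proof of Lemma~\ref{underlying} applies, which in turn depends on a correct relabelling of the current face at each step. Once these transitions are set up, connectedness falls out from the explicit list above together with the reduction to four components via $H^*$.
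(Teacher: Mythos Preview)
Your proposal is correct and takes essentially the same approach as the paper: both reduce to the four ordered diagonals of the face $f_1$ via connectedness of the planar dual (the paper cites Lemmas~\ref{inOut} and~\ref{underlying} for exactly the lifting step you describe), and both then connect these four diagonals by explicit short walks through the three faces $f_1,f_2,f_3$ around the degree-three vertex~$0$. Your packaging of the second step as a monodromy around the dual $3$-cycle is a pleasant way to see why degree three is what makes the four potential components merge, but the actual adjacencies you write down are the same ones the paper checks.
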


\begin{proof}
Let vertices $0,1,2,3,\alpha_{1,2},\alpha_{2,3}$ and $\alpha_{3,1}$ and faces $f_1,f_2$ and $f_3$ of $H$ be defined as in the previous section.  We show that every $(a,b)\in A$ admits a path to $(0,1)$ in $\Gamma$, which will complete the proof.

Let $(a,b)\in A$ be arbitrary and let $f'$ be the face whose boundary contains $a$ and $b$. We denote the four faces of $H$ whose boundaries share a unique edge with the boundary of $f'$ by $f_1',\dots,f_4'$. In light of Lemma~\ref{underlying}, Lemma~\ref{inOut}  gives us that $(a,b)$ has four neighbours $(c_1,d_1),\dots,(c_4,d_4)$ in $\Gamma$, where $c_i$ and $d_i$ are on the boundary of $f_i'$. Thus, by the connectedness of the planar dual of $H$, we get that there is a path in $\Gamma$ starting at $(a,b)$ and terminating at a pair $(s,t)$ on the boundary of the face $f_1$. We show that there is a path from every such $(s,t)$ to $(0,1)$. 

Of course, if $(s,t)=(0,1)$, then we are simply done. If $(s,t)=(\alpha_{3,1},\alpha_{1,2})$, then it is adjacent to $(0,2)$, which is adjacent to $(\alpha_{2,3},\alpha_{3,1})$, which is adjacent to $(0,1)$ and we are done. The proof in the case $(s,t)=(\alpha_{1,2},\alpha_{3,1})$ is similar. If $(s,t)=(1,0)$, then we see that $(1,0)$ is adjacent to $(\alpha_{3,1},\alpha_{2,3})$, which is adjacent to $(0,2)$, which is adjacent to $(\alpha_{3,1},\alpha_{1,2})$ which was already shown to admit a path to $(0,1)$. This completes the proof.
\end{proof}

We are now ready to prove Lemma~\ref{nboLem}.

\begin{proof}[Proof of Lemma~\ref{nboLem}]
By Lemmas~\ref{underlying} and~\ref{GammaConn}, the undirected graph underlying $\Phi$ is connected and, by Lemma~\ref{inOut}, every vertex of $\Phi$ has in-degree equal to its out-degree. Thus, $\Phi$ has an Eulerian circuit. In particular, this implies that there is a directed path between any two vertices of $\Phi$, and so we are done by Lemma~\ref{pathImpliesGadget}. 
\end{proof}

\subsection{Not-All-Zero Gadget for Quadrangulations}

We will now use Lemma~\ref{nboLem} to prove Lemma~\ref{nazLem}. We remark that, in a subtle way, this construction relies strongly on the fact that $0$ was chosen to be a vertex of degree \emph{exactly} three in $H$. In topological language, the proof relies strongly on the fact that the ``link'' of the vertex $0$ is a triangle.

\begin{proof}[Proof of Lemma~\ref{nazLem}]
We construct a not-all-zero gadget $Z(z_1,z_2,z_3,z_4)$. First, applying Lemma~\ref{nboLem}, we construct two $\{(1,2),(0,2),(1,0)\}$-gadgets $W_1(z_1,w_1)$ and $W_3(z_3,w_3)$, 
and two $\{(1,3),(0,3),(1,0)\}$-gadgets $W_2(z_2,w_2)$ and $W_4(z_4,w_4)$ disjointly. Next, add a disjoint copy $H^*$ of $H$ in which the vertex of $H^*$ corresponding to a vertex $v\in V(H)$ is denoted $v^*$ and add four new vertices $x_{1,2},y_{1,2},x_{3,4}$ and $y_{3,4}$ such that
\begin{itemize}
\item $x_{1,2}$ is adjacent to $w_1,w_2,0^*$ and $y_{1,2}$ and
\item $y_{1,2}$ is adjacent to $\alpha_{1,2}^*,\alpha_{3,1}^*$ and $x_{1,2}$.
\end{itemize}
Finally, we add a $\{(1,0),(0,1),(1,1)\}$-gadget $Y(y_{1,2},y_{3,4})$. See Figure~\ref{nazFig}.

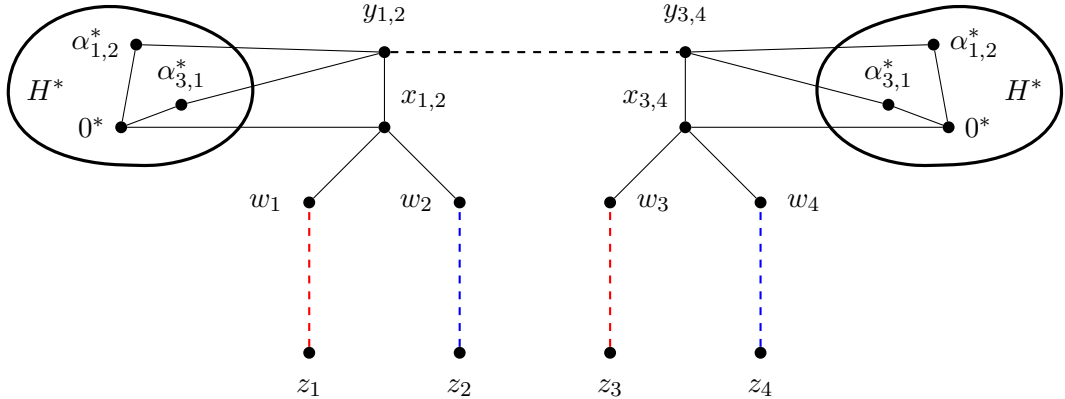
\begin{figure}[htbp]
\begin{center}
\begin{tikzpicture}

\draw (0,0) node [smallblack,label={[label distance=4pt]270:{$z_1$}}] (x1) {};
\draw (2,0) node [smallblack,label={[label distance=4pt]270:{$z_2$}}] (x2) {};
\draw (4,0) node [smallblack,label={[label distance=4pt]270:{$z_3$}}] (x3) {};
\draw (6,0) node [smallblack,label={[label distance=4pt]270:{$z_4$}}] (x4) {};

\draw (0,2) node [smallblack,label={[label distance=4pt]180:{$w_1$}}] (w1) {};
\draw (2,2) node [smallblack,label={[label distance=4pt]180:{$w_2$}}] (w2) {};
\draw (4,2) node [smallblack,label={[label distance=4pt]0:{$w_3$}}] (w3) {};
\draw (6,2) node [smallblack,label={[label distance=4pt]0:{$w_4$}}] (w4) {};
\draw (1,3) node [smallblack,label={[label distance=1pt]45:{$x_{1,2}$}}] (y12) {};
\draw (5,3) node [smallblack,label={[label distance=1pt]135:{$x_{3,4}$}}] (y34) {};
\draw (1,4) node [smallblack,label={[label distance=4pt]90:{$y_{1,2}$}}] (z12) {};
\draw (5,4) node [smallblack,label={[label distance=4pt]90:{$y_{3,4}$}}] (z34) {};

\draw(w1)--(y12);
\draw(w2)--(y12);
\draw(y12)--(z12);

\draw(w3)--(y34);
\draw(w4)--(y34);
\draw(y34)--(z34);
\draw[thick, dashed](z12)--(z34);
\draw[red,thick,dashed](x1)--(w1);
\draw[red,thick,dashed](x3)--(w3);
\draw[blue,thick,dashed](x2)--(w2);
\draw[blue,thick,dashed](x4)--(w4);

\draw[very thick] (-0.75,3.5) to [closed, quick curve through={(-0.75,3.5) . . (-2,4.5) . . (-2.5,4.6) . .  (-4,3.5) . . (-2.5,2.5) . . (-2,2.5) . . (-0.75,3.5)}] (-0.75,3.5);

\draw (-3.5,3.5) node (H1) {$H^*$};

\draw (-2.5,3) node[smallblack,label={[label distance=0pt]180:{$0^*$}}] (01) {};
\draw (-2.3,4.1) node[smallblack,label={[label distance=0pt]180:{$\alpha_{1,2}^*$}}] (alpha121) {};
\draw (-1.7,3.3) node[smallblack,label={[label distance=-1pt]90:{$\alpha_{3,1}^*$}}] (alpha311) {};

\draw(01)--(alpha121);
\draw(01)--(alpha311);
\draw(z12)--(alpha121);
\draw(z12)--(alpha311);
\draw(y12)--(01);

\draw[very thick] (6.75,3.5) to [closed, quick curve through={(6.75,3.5) . . (8,4.5) . . (8.5,4.6) . .  (10,3.5) . . (8.5,2.5) . . (8,2.5) . . (6.75,3.5)}] (6.75,3.5);

\draw (9.5,3.5) node (H1) {$H^*$};

\draw (8.5,3) node[smallblack,label={[label distance=0pt]0:{$0^*$}}] (02) {};
\draw (8.3,4.1) node[smallblack,label={[label distance=0pt]0:{$\alpha_{1,2}^*$}}] (alpha122) {};
\draw (7.7,3.3) node[smallblack,label={[label distance=-1pt]90:{$\alpha_{3,1}^*$}}] (alpha312) {};

\draw(02)--(alpha122);
\draw(02)--(alpha312);
\draw(z34)--(alpha122);
\draw(z34)--(alpha312);
\draw(y34)--(02);

\end{tikzpicture}
\end{center}
\caption{An illustration of the not-all-zero gadget. Thin solid black lines represent edges, thick solid black closed curves represent the copy $H^*$ of $H$ (drawn twice for clarity), red dashed lines connect signal vertices of $\{(1,2),(0,2),(1,0)\}$-gadgets, blue dashed lines connect signal vertices of $\{(1,3),(0,3),(1,0)\}$-gadgets and the black dashed line connects signal vertices of a  $\{(1,0),(0,1),(1,1)\}$-gadget. }
\label{nazFig}
\end{figure}

Now, for $p=(p_1,\dots,p_4)\in\{0,1\}^4\setminus\{(0,0,0,0)\}$, we define $\zeta_p$ as follows. The copy $H^*$ of $H$ is coloured according to the identity colouring $v^*\mapsto v$. If $p_1=0$, then we colour $W_1(z_1,w_1)$ according to $\zeta_{(0,2)}$ and similar for $W_3(z_3,w_3)$ if $p_3 = 0$. Similarly, if $p_2=0$, then we colour $W_2(z_2,w_2)$ according to $\zeta_{(0,3)}$ and similar for $W_4(z_4,w_4)$ if $p_4 = 0$. For each $i$ such that $p_i=1$, we colour the gadget $W_i(z_i,w_i)$ with $\zeta_{(1,0)}$. We colour $x_{1,2}$ with $\alpha_{1,2}$ if $p_2=1$, with $\alpha_{3,1}$ if $p_2=0$ and $p_1=1$ and with $\alpha_{2,3}$ otherwise. Also, colour $y_{1,2}$ with $1$ if at least one of $p_1$ or $p_2$ is equal to $1$ and $0$ otherwise. The colouring of $x_{3,4}$ and $y_{3,4}$ is similar. Since at least one of $p_1,\dots,p_4$ is equal to $1$, we know that one of $y_{1,2}$ or $y_{3,4}$ is mapped to $1$. We colour the gadget $Y(y_{1,2},y_{3,4})$ with one of the colourings $\zeta_{(1,0)},\zeta_{(0,1)}$ or $\zeta_{(1,1)}$ depending on which of $y_{1,2}$ or $y_{3,4}$ has already been coloured with $1$. One can easily check that this definition of $\zeta_p$ is a valid $H$-colouring and that $\zeta_p(z_i)=p_i$ for $i=1,2,3,4$; thus condition (\ref{patterns}) of Definition~\ref{patternDef} is satisfied. The rest of the proof consists of verifying that the other two conditions of Definition~\ref{patternDef} hold.

Since each of the vertices $z_1,z_2,z_3,z_4$ is contained in either a $\{(1,2),(0,2),(1,0)\}$-gadget or a $\{(1,3),(0,3),(1,0)\}$-gadget which is mapped to a canonical colouring, we know that $\psi(z_i)\in \{0,1\}$ for all $\psi$ which reconfigures to any $\zeta_p$ for $p\in\{0,1\}^4\setminus\{(0,0,0,0)\}$. Thus, to verify condition (\ref{stuck}) of Definition~\ref{patternDef} we need only show that no $H$-colouring $\psi$ which reconfigures to some $\zeta_p$ can map all of $z_1,z_2,z_3,z_4$ to zero. By definition of the $\{(1,2),(0,2),(1,0)\}$-gadget and $\{(1,3),(0,3),(1,0)\}$-gadget, if $\psi(z_1)=\psi(z_2)=0$, then $\psi(w_1)=2$ and $\psi(w_2)=3$. Also, by Lemma~\ref{idFrozen}, $\psi$ must colour $H^*$ according to the identity map. Thus, $x_{1,2}$ must map to a common neighbour of $0,2,3$, which implies that $\psi(x_{1,2})=\alpha_{2,3}$ as $H$ is $K_{2,3}$-free. From this, we get that $y_{1,2}$ must map to $0$, as this is the only common neighbour of $\alpha_{1,2}, \alpha_{3,1}$ and $\alpha_{2,3}$, again by $K_{2,3}$-freeness. Applying the same argument starting with $z_3$ and $z_4$ gives us that $y_{3,4}$ maps to $0$ as well, but this contradicts the definition of the $\{(1,0),(0,1),(1,1)\}$-gadget. Therefore, condition (\ref{stuck}) of Definition~\ref{patternDef} holds. 

Finally, we move on to condition (\ref{patternTransition}) of Definition~\ref{patternDef}. Let $p,q\in\{0,1\}^4\setminus\{(0,0,0,0)\}$ differ on a unique coordinate. By symmetry and without loss of generality we can assume that $p_1=0$ and $q_1=1$ (note that this is indeed without loss of generality since we can always swap the names of vertices $2$ and $3$ of $H$). We begin by reconfiguring the colouring of the gadget between $z_1$ and $w_1$ from $\zeta_{(0,2)}$ to $\zeta_{(1,2)}$ without changing the colour of $w_1$. Then, without changing the colour of $z_1$, we reconfigure it to $\zeta_{(1,0)}$. At this point, the current colouring differs from $\zeta_q$ only on $x_{1,2}$ and vertices of the gadget $Y(y_{1,2},y_{3,4})$. However, we may simply change the colour of  $x_{1,2}$ to  $\zeta_q\left(x_{1,2}\right)$ at this point since, by definition of  $\zeta_q$, it is either $\alpha_{1,2}$ or $\alpha_{3,1}$, both of which are compatible with the colour of $y_{1,2}$. As a final step, we reconfigure the colouring of $Y(y_{1,2},y_{3,4})$ to match $\zeta_q$. This completes the proof.
\end{proof}

\section{Quadrangulations of Projective Planes}
\label{wheelSection}

  Our first goal in this section is to deduce Theorem~\ref{ppCor} from  Theorem~\ref{quadThm}.    We then 
  adapt the argument to even wheels to finish off Theorem \ref{wheelThm}.  We start with some background about
  embeddings into projective plane, and recall a couple of useful ideas from \cite{WrochnaMasters}.


 Recall that the projective plane $\P^2$ can be expressed as the quotient $q: S^2 \to \P^2$ of the sphere $S^2$ modulo the equivalence relation identifying antipodal pairs of points.
 Though we use this point of view in our proof, for
 intuition and figures, we  use  the equivalent representation of $\P^2$  as the disk $B^2$ modulo the equivalence identifying antipodal pairs of points on the boundary. 

  One of the main features of  interest of the non-orientable surface $\P^2$ is that one can embed the cycle $C_r$ in it, for $r$ odd,  so that there  is exactly one face of length $2r$.  
 Indeed, recall that to find the length of a face in an embedding of a graph, we walk around it, keeping our right hand on the boundary, and count the number of vertices we encounter, with repetition, until we get back to where we started.  Referring to the first picture of Figure \ref{fig:embedpp1} in which we have embedded $C_5$ in the  projective plane, we see that there is one face of length $10$ as follows. Start just below vertex $1$ on the northern (top) hemisphere of the figure, and keeping your right hand on the edge, traverse the first edge to arrive at vertex $2$.  From there, the edge on your right is the edge from $2$ to $3$e drawn in the southern hemisphere. Traversing that edge, you then continue from $3$ to $4$ on the west.  You do not get back to you starting position  until you have traversed ten edges.    Playing the same game with the second picture, you see that you arrive back at vertex $1$ after five steps, but you are on the other side of it. You must traverse ten edges to get back to where you started.

\begin{figure}[htbp]
\begin{center}
\begin{tikzpicture}[every node/.style = {smallblack}]
   \newdimen\R
   \R=1.85cm
   \draw [thick,dotted] (0,0) circle [radius=\R];
   \foreach \i[evaluate=\i as \qi using {90 + (\i - 1)* 72*3}, 
                    evaluate=\i as \ri using  {270 + (\i -1) *72*3}] in {1,...,5}{
      \draw(\qi:\R) node[label =  \qi:$\i$] (u\i) {};
      \draw(\ri:\R) node[label = \ri: $\i$] (v\i) {};}
   \foreach \i/\j in {1/2,2/3,3/4,4/5,5/1}{\draw (u\i) edge (v\j);}

  \begin{scope}[xshift = 8cm]
   \draw [thick,dotted] (0,0) circle [radius=\R];
   \foreach \i[evaluate=\i as \qi using {90 + (\i - 1)* 72*3}, 
                    evaluate=\i as \ai using  {108 + (\i - 1)*72*3},
                    evaluate=\i as \bi using  {288 + (\i - 1)*72*3} ] in {1,...,5}{
      \draw(\qi:\R-.3cm) node[label =  \qi:$\i$] (u\i) {};
      \draw(\ai:\R) node[draw = none, fill = none] (a\i) {};
      \draw(\bi:\R) node[empty] (b\i){};}
      \foreach \i/\j in {1/2,2/3,3/4,4/5,5/1}{\draw (u\i) edge (a\i);
                                                               \draw (b\i) edge  (u\j);}
  \end{scope}
  \end{tikzpicture}
  \end{center}
\caption{Two drawings of the same embedding of $C_5$ in the projective plane}\label{fig:embedpp1}
  \end{figure}
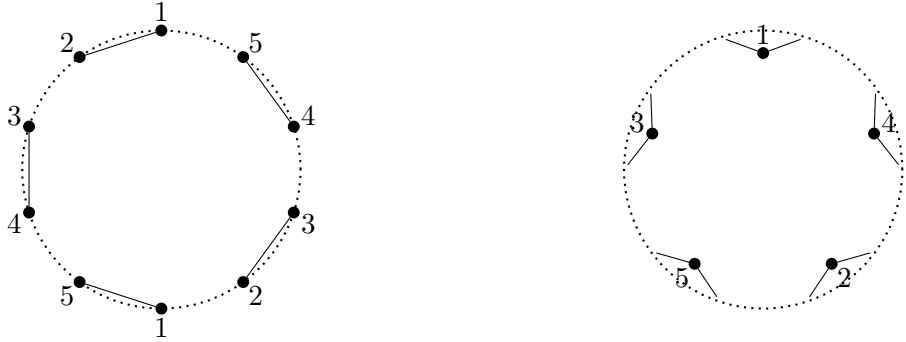  
  
Recall that an embedding of a graph $H$ in a surface is a quadrangulation if every face is of length four. An odd wheel can be drawn as a quadrangulation of $\P^2$ by embedding its outer cycle in the manner described above; see Figure \ref{fig:embedpp2}.

 \begin{figure}
\begin{center}
\begin{tikzpicture}[every node/.style = {smallblack}]
   \newdimen\R
   \R=1.85cm
   \draw [thick,dotted] (0,0) circle [radius=\R];
   \draw (0,0) node[label ={[label distance = .3cm] -5: $\alpha$}] (alpha) {};
   \foreach \i[evaluate=\i as \qi using {90 + (\i - 1)* 72*3}, 
                    evaluate=\i as \ri using  {270 + (\i -1) *72*3}] in {1,...,5}{
      \draw(\qi:\R) node[label =  \qi:$\i$] (u\i) {};
      \draw(\ri:\R) node[label = \ri: $\i$] (v\i) {};}
   \foreach \i/\j in {1/2,2/3,3/4,4/5,5/1}{\draw (u\i) edge (v\j);
                                                             \draw (alpha) edge (u\i);}

  \begin{scope}[xshift = 8cm]
    \draw [thick,dotted] (0,0) circle [radius=\R];
   \draw (0,0) node (alpha) {};
   \foreach \i[evaluate=\i as \qi using {90 + (\i - 1)* 72}, 
                    evaluate=\i as \ri using  {126 + (\i -1) *72}] in {1,...,5}{
      \draw(\qi:\R) node (u\i) {};
      \draw(\ri:\R/2) node (m\i) {};
      \draw(\ri:\R) node (v\i) {};}
   \foreach \i/\j in {1/2,2/3,3/4,4/5,5/1}{\draw (u\i) edge (v\i);
                                                             \draw (alpha) edge (m\i);
                                                             \draw (u\i) edge (m\i);
                                                             \draw (m\i) edge (u\j);}

  \end{scope}
  \end{tikzpicture}
\end{center}
\caption{Embedding of odd wheel and $4$-chromatic Myceilski graph in projective plane}\label{fig:embedpp2}
\end{figure}
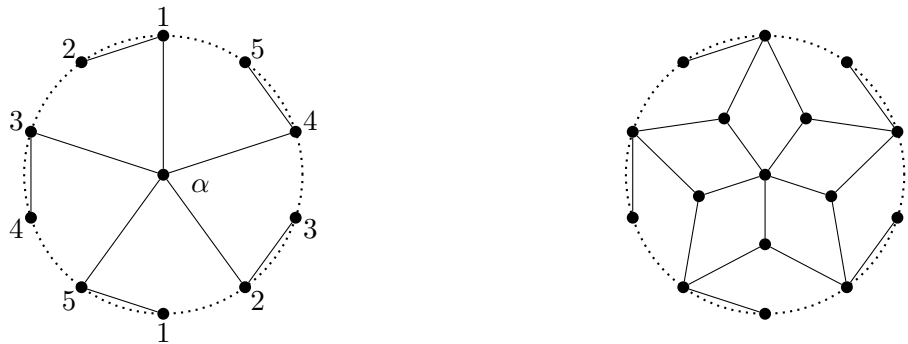

\begin{defn}
For graphs $F_1$ and $F_2$, the \emph{categorical} product (sometimes called the \emph{direct} or \emph{tensor} product) of $F_1$ and $F_2$, denoted $F_1\times F_2$, is the graph on vertex set $V(F_1)\times V(F_2)$ where $(u_1,u_2)$ is adjacent to $(v_1,v_2)$ if and only if $u_1v_1\in E(F_1)$ and $u_2v_2\in E(F_2)$. 
\end{defn}

The product $H \times K_2$ of $H$ with an edge is  often referred to as the  ``bipartite double cover'' of $H$, among other names.  It is well known and easy to see that if $H$ is bipartite, then  $H \times K_2$ is two disjoint copies of $H$.  It is well known, and easy to show, that for a graph $G$ and homomorphisms $\phi_1: G \to F_1$ and $\phi_2: G \to F_2$, the map $\phi_1 \times \phi_2 : G \to F_1 \times F_2$ defined by $v \mapsto (\phi_1(v), \phi_2(v))$ is a homomorphism.
(In fact it is known to be the unique homomorphism that commutes with the projections of the product onto its factors.)  

\begin{lem}[Wrochna~\cite{WrochnaMasters}]
\label{xK2}
For every graph $H$, the \Hrec{(H\times K_2)} problem is polynomially equivalent to the restriction of the \Hrec{H} problem to bipartite instances.
\end{lem}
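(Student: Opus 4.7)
The plan is to establish the polynomial equivalence by exhibiting reductions in both directions, each relying on a ``freezing'' property of reconfiguration sequences in $\Hom(G, H \times K_2)$ that I will prove first.

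To set up, note that $H \times K_2$ comes with projections $\pi_1$ and $\pi_2$ onto its factors, and by the universal property of categorical products, any homomorphism $\phi: G \to H \times K_2$ is determined by its projections $\pi_1\phi: G \to H$ and $\pi_2\phi: G \to K_2$ via $\phi(v) = (\pi_1\phi(v), \pi_2\phi(v))$; conversely, any pair of homomorphisms $\phi_1: G \to H$ and $c: G \to K_2$ determines a homomorphism $\phi_1 \times c: v \mapsto (\phi_1(v), c(v))$ to $H \times K_2$. In particular, $\Hom(G, H \times K_2)$ is empty unless $G$ is bipartite. The main lemma I would prove is the following freezing property: in any reconfiguration sequence $\phi_0, \ldots, \phi_m$ in $\Hom(G, H \times K_2)$ and any non-isolated vertex $v$ of $G$, the value $\pi_2\phi_i(v)$ is independent of $i$. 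This would be proved by induction on $i$: if step $i-1 \to i$ changes the colour of $v$ from $(h_0, b_0)$ to $(h, b)$, then validity of $\phi_{i-1}$ and $\phi_i$ forces both $b_0$ and $b$ to differ from $\pi_2\phi_{i-1}(u) = \pi_2\phi_i(u)$ at each neighbour $u$ of $v$, and since $|V(K_2)| = 2$ this forces $b = b_0$.

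For the forward direction (reducing bipartite \Hrec{H} to \Hrec{(H\times K_2)}), given a bipartite instance $(G, f, g)$ I would compute a $2$-colouring $c$ of $G$ and output the instance $(G, f \times c, g \times c)$. A reconfiguration sequence $f_0, \ldots, f_m$ from $f$ to $g$ in $H$ then lifts coordinatewise to $f_0 \times c, \ldots, f_m \times c$ in $H \times K_2$, and any reconfiguration sequence from $f \times c$ to $g \times c$ in $H \times K_2$ projects under $\pi_1$ (after deleting repetitions) to a reconfiguration sequence from $f$ to $g$ in $H$. For the reverse direction (reducing \Hrec{(H\times K_2)} to bipartite \Hrec{H}), given an instance $(G, f, g)$ I would verify that $G$ is bipartite and that $\pi_2 f = \pi_2 g$ on every non-trivial component of $G$ (a necessary condition by freezing), preprocess any disagreement on isolated vertices with a single reassignment per vertex, and then output the bipartite instance $(G, \pi_1 f, \pi_1 g)$. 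Correctness follows from freezing, which guarantees that every reconfiguration step in $H \times K_2$ changes only the first coordinate of each non-isolated vertex and hence projects to a valid step in $H$, while any step in $H$ lifts back by holding the second coordinate fixed equal to $\pi_2 f$.

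The main obstacle is the freezing property, though its proof is short once formalised; modulo this, both reductions are essentially syntactic and clearly polynomial time, with only minor bookkeeping required to handle isolated vertices and multiple components.
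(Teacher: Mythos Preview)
Your proposal is correct and follows essentially the same approach as the paper: both directions use projection onto the first factor and lifting via a fixed $2$-colouring of $G$. The paper's proof is terser---it simply asserts that ``both of these transformations preserve the `reconfigures to' relation''---whereas you make explicit the freezing of the second coordinate along any reconfiguration sequence (which is what actually justifies the reverse reduction) and you handle the corner cases of isolated vertices and the preliminary check $\pi_2 f = \pi_2 g$ on non-trivial components; the paper leaves these details implicit.
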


\begin{proof}
Let $V(K_2)=\{1,2\}$, let $\pi_1: V(H)\times V(K_2)\to V(H)$ be the projection onto the first coordinate and let $\pi_2:V(H)\times V(K_2)\to V(K_2)$ be the projection onto the second coordinate. Note that $\pi_1$ and $\pi_2$ are homomorphisms from $H\times K_2$ to $H$ and $K_2$, respectively. Thus, if $(G,f,g)$ is an instance of \Hrec{(H\times K_2)}, then $G$ admits a $K_2$-colouring (equivalently, $G$ is bipartite) by composing $f$ with $\pi_2$. 

Now, let $G$ be any bipartite graph and let $A,B$ be the two sets of the bipartition. Given a homomorphism $f$ from $G$ to $H\times K_2$, we can compose $f$ with $\pi_1$ to get a homomorphism $f'$ to $H$. On the other hand, if $f'$ is a homomorphism from $G$ to $H$, we can let $f:V(G)\to V(H)\times V(K_2)$ be defined so that $f(v)=(f'(v),1)$ if $v\in A$ and $f(v)=(f'(v),2)$ if $v\in B$. It is not hard to show that both of these transformations preserve the ``reconfigures to'' relation, and so the \Hrec{(H\times K_2)} problem is polynomial-time equivalent to the \Hrec{H} problem restricted to bipartite instances. 
\end{proof}

Theorem~\ref{ppCor} is now immediate from Theorem \ref{quadThm} and 
Lemma \ref{xK2} by the following fact.

\begin{fact}
 Let $H$ be a non-bipartite quadrangulation of the projective plane.  The  graph $H \times K_2$ can be embedded as a quadrangulation of the sphere.   
\end{fact}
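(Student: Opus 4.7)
The plan is to construct the desired embedding by lifting the given embedding of $H$ in $\P^2$ through the antipodal double cover $q \colon S^2 \to \P^2$, and then identifying the resulting graph with $H \times K_2$.

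First I would pull back the given $2$-cell embedding of $H$ in $\P^2$ along $q$ to obtain a $2$-cell embedding of a graph $\widetilde{H}$ in $S^2$, whose vertices, edges, and faces are the components of the $q$-preimages of the corresponding objects of $H$. Since $q$ is a $2$-to-$1$ local homeomorphism, $\widetilde{H}$ has $2|V(H)|$ vertices and $2|E(H)|$ edges; because each face of $H$ is a simply connected open disk, it lifts to two disjoint faces of $\widetilde{H}$, each bounded by a $4$-cycle. So $\widetilde{H}$ is an embedding in $S^2$ with every face of length $4$.

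Next I would invoke the standard fact that any quadrangulation of the sphere is bipartite (the $4$-cycle face boundaries are even and generate the cycle space of a planar graph), giving a bipartition $V(\widetilde{H}) = A \sqcup B$ of $\widetilde{H}$. Let $\tau \colon \widetilde{H} \to \widetilde{H}$ be the deck transformation of $q$, i.e., the restriction of the antipodal map on $S^2$; it is a fixed-point-free involutive automorphism of $\widetilde{H}$ with quotient $H$. I would then argue that $\tau$ swaps $A$ and $B$: if $\tau$ instead preserved $A$ and $B$ setwise, the partition would descend to a proper $2$-colouring of the quotient $\widetilde{H}/\tau \cong H$, contradicting the assumption that $H$ is non-bipartite. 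In particular $\widetilde{H}$ is connected, so it is a quadrangulation of $S^2$ in the paper's sense.

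Finally I would identify $\widetilde{H}$ with $H \times K_2$ as abstract graphs. For each $v \in V(H)$, label its two preimages so that $(v,1) \in A$ and $(v,2) \in B$. Because every edge of $\widetilde{H}$ crosses the bipartition, we have $(u,i) \sim (v,j)$ in $\widetilde{H}$ if and only if $uv \in E(H)$ and $i \neq j$, which is precisely the adjacency rule defining $H \times K_2$. The step demanding the most care is showing that $\tau$ interchanges the two colour classes of $\widetilde{H}$: this is exactly where the non-bipartiteness hypothesis on $H$ is used, and without it the construction would yield two disjoint copies of $H$ rather than a single quadrangulation of the sphere.
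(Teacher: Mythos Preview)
Your proposal is correct and follows essentially the same approach as the paper: both lift the embedding through the antipodal double cover $q\colon S^2\to\P^2$, observe that the lifted graph is a spherical quadrangulation and hence bipartite, and then use non-bipartiteness of $H$ to show that the two lifts of each vertex lie in opposite colour classes, yielding the identification with $H\times K_2$. The only cosmetic difference is that the paper packages the final identification as the bijective homomorphism $q\times\phi\colon \widetilde H\to H\times K_2$, whereas you check the adjacency relation directly after labelling the fibres by bipartition class; these are the same argument.
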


\begin{proof}
  Given a  embedding of $H$ in the projective plane, by making small  perterbations we may assume that no vertex of $H$ is
  embedded on the boudary of $\P^2$.  Consider the preimage under $q: S^2 \to \P^2$ of this embedding of $H$. It is
  clearly an embedding on $S^2$ of some graph $H'$; and $q$ clearly induces a graph homomorphism $q: H' \to H$.   For any vertex $v$ 
 of $H$, let $q^{-1}(v) = \{v_N, v_S\}$ where the subscript designates the hemisphere of $S^2$ in which the vertex is embedded.  What remains to be proven is that $H'$ is a quadrangulation of $S^2$ and that it is isomorphic to
 $H \times K_2$. 
  To see that it is a quadrangulation,  observe that a closed (topological) path $P$ in $S^2$ defines a closed path $q(P)$ in $\P^2$.  
  The boundary of a face of $H'$ therefore maps to the boundary of a face in $H$ and, by assumption, this is always 
  a quadrangle.

  The graphs  $H'$ and $H \times K_2$ clearly have the same numbers of vertices and edges, so to show they are isomorphic it is enough to show there is a homomorphism between them that is a bijection on the vertices.  As $H'$ is a quadrangulation of $S^2$, it is bipartite, so has a $K_2$-colouring $\phi$.  We claim that the homomorphism 
      \[ q \times \phi: H' \to H \times K_2\text{ defined by } x \mapsto (q(x), \phi(x)) \] 
is a bijection.  Indeed we have that $[q \times \phi](v_N)$ and $[q \times \phi](v_S)$ are both in 
$\{ (v,0), (v,1) \}$ for each $v$, so it is enough to show that $\phi(v_N) \neq \phi(v_S)$ for all $v$ in $V(H)$.  Assume, towards contradiction, that $\phi(v_N) = \phi(v_S)$. Then
 for any neighbour $u$ of $v$ we have that each of $v_N$ and $v_S$ are adjacent to one of $u_N$ and $u_S$, and so 
 $\phi(u_N) = \phi(u_S)$. As $H$ is connected this therefore holds for all vertices of $H$ and so $\phi$ induces
a $K_2$-colouring of $H$, which is impossible.  Thus $\phi(v_N) \neq \phi(v_S)$ for all $v$ in $V(H)$ as needed. 
\end{proof}

As we noted above, this tells us that \Hrec{H} is PSPACE-complete for $H$ an odd wheel. One can construct many non-bipartite quadrangulations of $\P^2$.  Starting with an odd cycle embedded as we show in Figure \ref{fig:embedpp1} one simply has to quadrangulate it, as in the graph on the right of Figure~\ref{fig:embedpp2}. This is the Gr\"otzsch graph, which is obtained from a $5$-cycle by applying the Mycielski construction. More generally, every $4$-chromatic ``generalized Mycielski graph'' (see~\cite{genMyc} for a definition) can be embedded as a quadrangulation of the projective plane, yielding another interesting special case of Theorem~\ref{ppCor}.

We now provide an extra argument which allows us to extend the argument to even wheels of lenth at least six, thereby allowing us to prove Theorem~\ref{wheelThm}. We need to recall one more basic idea about retractions.
 
\begin{defn}
Given a graph $F$ and an induced subgraph $H$ of $F$, a map $\varphi:V(F)\to V(H)$ is called a \emph{retraction} from $F$ to $H$ if $\varphi$ is a homomorphism from $F$ to $H$ and $\varphi(v)=v$ for every $v\in V(H)$. 
\end{defn}

\begin{defn}
An induced subgraph $H$ of a graph $F$ is called a \emph{retract} of $F$ if there is a retraction from $F$ to $H$.
\end{defn}

\begin{lem}
\label{retractHard}
If $H$ is a retract of $F$, then \Hrec{H} reduces to \Hrec{F}. 
\end{lem}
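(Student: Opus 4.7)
The plan is to give a trivial polynomial-time reduction: send an instance $(G,f,g)$ of \Hrec{H} to the \emph{same} triple $(G,f,g)$ viewed as an instance of \Hrec{F}. This is well defined because, as $H$ is an induced subgraph of $F$, any $H$-colouring of $G$ is also an $F$-colouring. It then suffices to prove that $f$ reconfigures to $g$ in $\Hom(G,H)$ if and only if $f$ reconfigures to $g$ in $\Hom(G,F)$.

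The forward direction is immediate, since any reconfiguration sequence witnessing the $H$-side can be read verbatim as a reconfiguration sequence in $\Hom(G,F)$. For the backward direction I would use the retraction $\varphi : V(F) \to V(H)$. Given a sequence $f_0,\dots,f_m \in \Hom(G,F)$ with $f_0=f$ and $f_m=g$, consider $\varphi \circ f_0, \dots, \varphi \circ f_m$. Each $\varphi \circ f_i$ lies in $\Hom(G,H)$ because the composition of two homomorphisms is a homomorphism. Since $f_i$ and $f_{i+1}$ differ on at most one vertex, so do $\varphi \circ f_i$ and $\varphi \circ f_{i+1}$. Finally, because $f$ and $g$ already map into $V(H)$ and $\varphi$ fixes $V(H)$ pointwise, we have $\varphi \circ f = f$ and $\varphi \circ g = g$. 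Deleting consecutive repetitions from $\varphi \circ f_0, \dots, \varphi \circ f_m$ then yields a reconfiguration sequence from $f$ to $g$ in $\Hom(G,H)$.

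The reduction clearly runs in polynomial (in fact constant) time, so this establishes the claim. There is essentially no obstacle; the only point worth flagging is that it is crucial that $\varphi$ restricts to the identity on $V(H)$, as otherwise the first and last elements of the projected sequence would not coincide with $f$ and $g$.
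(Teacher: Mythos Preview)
Your proof is correct and is essentially identical to the paper's own argument: the paper also maps $(G,f,g)$ to itself (written there as $(G,\iota\circ f,\iota\circ g)$ with $\iota$ the inclusion), and uses the retraction $\varphi$ to pull an $F$-reconfiguration sequence back to an $H$-reconfiguration sequence, deleting repetitions exactly as you do.
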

\begin{proof}
Let $\varphi$ be a retraction from $F$ to $H$ and define $\iota:V(H)\to V(F)$ by $\iota(v)=v$ for all $v\in V(H)$. Clearly, $\iota$ is a homomorphism from $H$ to $F$. Note that $\varphi \circ\iota$ is nothing more than the identity map on $H$.

Now, given an instance $(G,f,g)$ of \Hrec{H}, we consider the instance $(G,\iota\circ f,\iota\circ g)$ of \Hrec{F}. Clearly, if $f$ reconfigures to $g$, then composing each map on the reconfiguration sequence with $\iota$ yields a reconfiguration sequence taking $\iota\circ f$ to $\iota\circ g$. On the other hand, if $\iota\circ f$ reconfigures to $\iota\circ g$, then composing each map on the reconfiguration sequence with $\varphi$ (and possibly taking a subsequence) yields a reconfiguration sequence taking $f$ to $g$. This completes the proof.
\end{proof}

With Theorem~\ref{ppCor}, this and the following fact show that $\Hrec{H}$ is PSPACE-complete for even wheels $H = W_{2n}$ as long as $n \geq 3$, proving Theorem \ref{wheelThm}.

\begin{fact}
  For even $k \geq 6$, the graph $W_{k} \times K_2$ retracts to a non-trivial quadrangulation of $S^2$.
  \end{fact}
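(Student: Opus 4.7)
The plan is to explicitly construct both the retract $H'$ and the retraction $\varphi \colon W_k \times K_2 \to H'$, then appeal to Theorem~\ref{quadThm}. Write $W_k$ as a hub $\alpha$ joined to a cycle $c_0 c_1 \cdots c_{k-1}$, and write the vertices of $W_k \times K_2$ as $\alpha_j$ and $c_{i,j}$ for $0 \le i < k$ and $j \in \{0,1\}$. Since $k$ is even, the subgraph $C_k \times K_2$ decomposes into two vertex-disjoint $k$-cycles $A$ and $B$, where $A$ consists of the vertices $v_i := c_{i,\, i \bmod 2}$ with edges $v_i v_{i+1}$ (indices mod $k$). Each hub $\alpha_j$ is then adjacent to exactly $k/2$ alternate vertices on each of $A$ and $B$.

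I will take $H'$ to be the induced subgraph of $W_k \times K_2$ on $V(A) \cup \{\alpha_0, \alpha_1\}$; equivalently, $H'$ consists of the $k$-cycle $v_0 v_1 \cdots v_{k-1}$ together with $\alpha_0$ joined to $v_i$ for every odd $i$ and $\alpha_1$ joined to $v_i$ for every even $i$. To embed $H'$ as a quadrangulation of $S^2$, I draw the cycle in the plane, place $\alpha_0$ inside and $\alpha_1$ outside, and draw the $k/2$ spokes from each hub; the interior is then partitioned into $k/2$ quadrilateral faces of the form $\alpha_0 v_{2i-1} v_{2i} v_{2i+1}$, and the exterior into $k/2$ analogous faces around $\alpha_1$. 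A routine case check establishes that $H'$ is $K_{2,3}$-free: the only pairs of non-adjacent vertices worth considering are two cycle vertices attached to the same hub, which share that hub plus at most one intermediate cycle vertex, since the cycle has length $k \ge 6$. (For $k=4$ these pairs would share the hub plus \emph{both} intermediate cycle vertices, producing a $K_{2,3}$, which is why the case $k=4$ is excluded.) Finally, $|V(H')| = k+2 \ge 8 > 4$, so $H'$ is not the $4$-cycle.

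The retraction $\varphi \colon W_k \times K_2 \to H'$ will be the identity on $V(H')$, and will send every vertex $c_{i,(i+1) \bmod 2}$ of cycle $B$ to $v_{(i+1) \bmod k}$ on cycle $A$, i.e.\ a ``shift by one'' of $B$ onto $A$. Verifying that $\varphi$ preserves adjacency is a short case analysis: a cycle-$B$ edge $c_{i,(i+1) \bmod 2}\, c_{i+1,\, i \bmod 2}$ maps to the cycle-$A$ edge $v_{i+1} v_{i+2}$, while a hub-to-$B$ edge $\alpha_j c_{i,1-j}$ (with $c_{i,1-j} \in V(B)$) maps to $\alpha_j v_{i+1}$, and the parity of $i+1$ is exactly that of the neighbours of $\alpha_j$ in $H'$. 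Combining this with Lemma~\ref{retractHard} and Theorem~\ref{quadThm} gives that \Hrec{(W_k \times K_2)} is PSPACE-complete, and then Lemma~\ref{xK2} upgrades this to the PSPACE-completeness of \Hrec{W_k} restricted to bipartite instances, completing Theorem~\ref{wheelThm}. The main technical point is not conceptually deep but requires careful bookkeeping: confirming that the shift-by-one map is genuinely a homomorphism, and ensuring that the $K_{2,3}$-freeness check actually uses the hypothesis $k \ge 6$ in the expected place.
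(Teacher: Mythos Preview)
Your proof is correct and follows essentially the same approach as the paper: the retract $H'$ you construct (the $k$-cycle with two hubs each joined to alternate vertices) is exactly the paper's $H_k$, and your shift-by-one map $c_{i,(i+1)\bmod 2}\mapsto v_{i+1}$ coincides with the paper's retraction $\varphi(x_i,j)=(x_{i+1},j)$ for $i\not\equiv j\pmod 2$. You actually supply more detail than the paper---the explicit planar embedding, the $K_{2,3}$-freeness check pinpointing where $k\ge 6$ is used, and the deduction of Theorem~\ref{wheelThm} via Lemmas~\ref{retractHard} and~\ref{xK2}---all of which the paper leaves implicit or asserts without justification.
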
  
\begin{proof}

The graph $W_k\times K_2$ consists of two cycles
\[C_1:=(x_1,1)(x_2,2),\dots,(x_k,2),\]
\[C_2:=(x_1,2)(x_2,1),\dots,(x_k,1)\]
and vertices $(\alpha,1)$ and $(\alpha,2)$ where the neighbourhood of $(\alpha,1)$ is $\left\{(x_1,2),\dots,(x_k,2)\right\}$ and the neighbourhood of $(\alpha,2)$ is $\left\{(x_1,1),\dots,(x_k,1)\right\}$. Let $H_k$ be the subgraph of $W_k\times K_2$ induced by $V(C_1)\cup\{(\alpha,1),(\alpha,2)\}$.
The graph $H_k$ is easily seen to be a non-trivial quadrangulation of the sphere, 
so we will be done if  we can find a retraction from $W_k\times K_2$ to $H_k$. 
Let $\varphi:V(W_k)\times V(K_2)\to V(H_k)$ be defined so that, for $1\leq i\leq k$ and $j\in\{1,2\}$, we have
\[\varphi(x_i,j):= (x_{i+1},j)\text{ if }i\not\equiv j\bmod 2,\]
\[\varphi(x_i,j):= (x_{i},j)\text{ if }i\equiv j\bmod 2,\]
\[\varphi(\alpha,j):=(\alpha,j)\]
(where indices are viewed modulo $k$). It is easily observed that this is, indeed, a retraction from $W_k\times K_2$ to $H_k$ and so we are done. 
\end{proof}

\section{Gadgets for Reflexive Triangulations}
\label{triSection}

In this section, all graphs are assumed to be reflexive, unless otherwise stated. Our goal is to prove that \Hrec{H} is PSPACE-complete when $H$ is a reflexive graph which contains a substructure which resembles a triangulation near a vertex. The following definition is useful for defining the class of graphs that we consider. 

\begin{defn}
Given a stiff reflexive graph $H$, we say that a set $S\subseteq V(H)$ is \emph{listable} if there exists a reflexive $S$-gadget $X(x_1)$.
\end{defn}

The following, somewhat cumbersome but quite broad, definition describes the class of graphs that we will consider. 

\begin{defn}
\label{localDef}
A stiff reflexive graph $H$ is \emph{locally triangulated} around a vertex $0\in V(H)$ if $H$ contains a reflexive subgraph $F$ such that
\begin{enumerate}[(a)]
\item\label{0F} $0\in V(F)$,
\item \label{noK4} the subgraph of $H$ induced by $V(F)$ is $K_4$-free,
\item\label{nbCycle} the neighbourhood of $0$ in $F$ contains a spanning cycle, say $12\cdots k$,
\item\label{beta} for $1\leq i\leq k$, the neighbours $i$ and $i+1$ of $0$ (where vertex labels are viewed modulo $k$) have a common neighbour $\beta_i$ in $V(F)$ which is distinct from $0$,
\item\label{betaCycle} for $1\leq i\leq k$, the neighbourhood of $\beta_i$ in $F$ contains a spanning cycle,
 \item\label{edgeList} every pair $\{u,v\}$ where $uv\in E(F)$ and $u\neq v$ is listable and
\item\label{triList} the sets $\{0,2,3\}$ and $\{0,3,4\}$ are listable.
\end{enumerate}
\end{defn}

The main focus of this section is on proving variants of Lemmas~\ref{nboLemTri} and~\ref{nazLemTri} for graphs which are locally triangulated around a vertex (Lemmas~\ref{nboLemLocal} and~\ref{nazLemLocal} below). Next, we show that locally triangulated graphs generalize $K_4$-free triangulations distinct from the reflexive triangle and, thus, Lemmas~\ref{nboLemTri} and~\ref{nazLemTri} follow from the results of this section. 

\begin{lem}
\label{localImplies}
Let $H$ be a finite reflexive $K_4$-free triangulation which is not isomorphic to a reflexive triangle. Then $H$ is locally triangulated around every vertex $0\in V(H)$. 
\end{lem}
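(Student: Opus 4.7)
The plan is to take $F=H$ itself and verify each of the conditions of Definition~\ref{localDef}, together with the stiffness requirement built into that definition. First I would establish that $H$ is stiff: by Observation~\ref{nbDom}, a failure of stiffness would give distinct vertices $u,v\in V(H)$ with $N(u)\subseteq N(v)$; since $u\in N(u)$ by reflexivity, this forces $uv\in E(H)$. Letting $w_1w_2\cdots w_m$ be the link cycle of $u$ with $w_1=v$, one has $m\geq 3$ because $H$ is a triangulation other than a reflexive triangle (so every vertex has degree at least three). Both $w_2,w_3$ lie in $N(u)\subseteq N(v)$, so $vw_2,vw_3\in E(H)$, and together with $w_2w_3\in E(H)$ the set $\{u,v,w_2,w_3\}$ induces a $K_4$, contradicting the hypothesis.

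Conditions (\ref{0F}) and (\ref{noK4}) hold trivially with $F=H$. Condition (\ref{nbCycle}) is the standard fact that the link of any vertex in a triangulation is a cycle spanning its open neighbourhood. For (\ref{beta}), each edge $\{i,i+1\}$ of the link of $0$ lies on exactly two triangular faces, one of which is $\{0,i,i+1\}$; I set $\beta_i$ to be the third vertex of the other face, noting that $\beta_i\neq 0$ because otherwise the $3$-cycle $\{0,i,i+1\}$ would bound both faces incident to $\{i,i+1\}$, which can only happen when $H$ itself is a reflexive triangle. Condition (\ref{betaCycle}) is just (\ref{nbCycle}) applied to $\beta_i$ in place of $0$, using that $F=H$.

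The main obstacle is (\ref{edgeList}) and (\ref{triList}), which I would verify by building explicit gadgets. Given an edge $uv\in E(H)$ with $u\neq v$, let $a,b$ denote the third vertices of the two triangular faces incident to $uv$; the preceding argument shows $a\neq b$, and moreover $ab\notin E(H)$, for otherwise $\{u,v,a,b\}$ would form a $K_4$. I would then define the $\{u,v\}$-gadget $X(x_1)$ to consist of a disjoint reflexive copy $H^*$ of $H$ together with a reflexive signal vertex $x_1$ made adjacent to $u^*,v^*,a^*,b^*$, letting $\zeta_u,\zeta_v\in\Hom(X,H)$ act as the identity on $H^*$ and send $x_1$ to $u$ and $v$ respectively. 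Using stiffness of $H$, any $\psi\in\Hom(X,H)$ that reconfigures to $\zeta_u$ or $\zeta_v$ must restrict to the identity on $H^*$ (restrict the reconfiguration sequence to $H^*$, remove duplicates, and apply the definition of stiffness), so $\psi(x_1)\in N(u)\cap N(v)\cap N(a)\cap N(b)$; since $a\notin N(b)$ and any further common neighbour $c$ would give a $K_4$ on $\{c,u,v,a\}$, this intersection equals $\{u,v\}$, establishing condition (\ref{stuck}) of Definition~\ref{patternDef}, while (\ref{patternTransition}) is immediate because $\zeta_u$ and $\zeta_v$ differ only at $x_1$ and $uv\in E(H)$. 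For (\ref{triList}), the same construction with $x_1$ adjacent to $\{0^*,2^*,3^*\}$ (respectively $\{0^*,3^*,4^*\}$) works: $K_4$-freeness forces the common closed neighbourhood of a reflexive triangle to equal the triangle itself, and the transitions required by (\ref{patternTransition}) are realized by single-step recolourings of $x_1$ along the three edges of the triangle.
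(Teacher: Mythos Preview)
Your proposal is correct and follows essentially the same approach as the paper: you take $F=H$, verify stiffness via the $K_4$-free hypothesis and the link cycle, and build the listability gadgets for edges and triangles by attaching a reflexive signal vertex to a frozen copy of $H$, using $K_4$-freeness to pin down the relevant common neighbourhoods. Your write-up is somewhat more explicit than the paper's (you spell out why $a\neq b$, why $ab\notin E(H)$, and why conditions~(\ref{stuck}) and~(\ref{patternTransition}) of Definition~\ref{patternDef} hold), but the argument is the same.
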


\begin{proof}
We let $F:=H$. Given this, it is trivial that  conditions \eqref{0F} and \eqref{noK4} of Definition~\ref{localDef} hold. In any triangulation, apart from the triangle, the neighbourhood of any given vertex induces a cycle.  So, condition \eqref{nbCycle} of Definition~\ref{localDef} holds. As in Definition~\ref{localDef}, we denote the neighbours of $0$ by $1,\dots,k$ where $12\cdots k$ is a cycle. 

If $H$ is not stiff, then, by Observation~\ref{nbDom} and since $0$ is an arbitrary vertex and $H$ is reflexive, we have, without loss of generality,  $N(0)\subseteq N(1)$. However, if this were the case, then the vertices $0,1,2,3$ would form a reflexive $K_4$, which is a contradiction. So, $H$ is stiff. 

The edge from $i$ to $i+1$ separates the face $f$ incident to $0,i$ and $i+1$ from another face $f'$. Let $\beta_i$ be the vertex incident to $f'$, distinct from $i$ and $i+1$. Since $H$ is not a reflexive triangle, we must have that  $\beta_i$ is distinct from vertex $0$ and so condition \eqref{beta} of Definition~\ref{localDef} holds. The same argument that was used to prove \eqref{nbCycle} shows that \eqref{betaCycle} holds as well. 

Now, given an edge $u,v\in E(H)$, we let $x,y$ be the two vertices, distinct from $u$ and $v$, which are incident to the two faces whose boundary contains the edge $uv$. Then $u,v$ and $x$ are the only common neighbours of $u, v$ and $x$, and so \[N(u)\cap N(v)\cap N(x)\cap N(y)=\{u,v\}.\]
Given this, and the fact that $H$ is stiff, a $\{u,v\}$-gadget can be constructed by simply taking a copy of $H$ coloured by the identity map in all canonical colourings and adding a reflexive signal vertex $x_1$ adjacent to $u,v,x$ and $y$. So condition \eqref{edgeList} of Definition~\ref{localDef} holds. Also, for any triangle $x,y,z$ in $H$, we have
\[N(x)\cap N(y)\cap N(z)=\{x,y,z\}\]
which, via a similar argument to that which was used for \eqref{edgeList}, gives us condition \eqref{triList}. This completes the proof. 
\end{proof}

\begin{rem}
Of course, the condition that $H$ is locally triangulated around a vertex covers a wide variety of graphs beyond $K_4$-free reflexive triangulations. For example, many reflexive graphs embedded on other surfaces (which need not even be triangulations of that surface, i.e., they can have larger faces) satisfy the criteria of Definition~\ref{localDef}. 
\end{rem}

Thus, by Lemma~\ref{localImplies}, the following theorem, proven in the rest of this section, generalizes Theorem~\ref{triThm}.

\begin{thm}
\label{localThm}
If $H$ is  a finite reflexive graph which is locally triangulated around a vertex $0$, then \Hrec{H} is PSPACE-complete when restricted to instances $(G,f,g)$ such that $G$ is reflexive. 
\end{thm}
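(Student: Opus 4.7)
The plan is to reduce \Hrec{K_4}, which is PSPACE-complete by \cite{4col}, to \Hrec{H} restricted to reflexive instances, following the template of Section~\ref{overviewSection}. By Lemma~\ref{gadgetImpliesHard}, applied with reflexive gadgets, it suffices to exhibit a not-both-one gadget and a not-all-zero gadget in $H$ for a pair of adjacent vertices $0$ and $1$, where $1$ is chosen on the spanning cycle of $N_F(0)$ guaranteed by condition~(\ref{nbCycle}). Verifying that Lemma~\ref{gadgetImpliesHard} transfers to the reflexive setting is routine: the constructed graph $G'$ contains only the signal vertices $u_i$ together with the gadget vertices, all of which are taken reflexive; and each single-vertex color change in a canonical transition moves between adjacent colors in $H$ because $01 \in E(H)$ and every listable edge of $F$ consists of adjacent vertices.

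For the not-both-one gadget $Y(x_1, x_2)$, I would adapt the $\Phi$-path construction from Section~\ref{quadSection} to the triangulated setting. Define a directed graph whose vertices are ordered adjacent pairs $(a, b)$ in $F$ and whose arcs encode rotations around a vertex lying on a common triangle, propagated through the cyclic neighborhoods of $0$ and each $\beta_i$ guaranteed by conditions~(\ref{nbCycle}) and~(\ref{betaCycle}). A directed path from $(0, 1)$ to $(1, 0)$ in this graph yields the desired gadget as a ladder of $\{u, v\}$-gadgets from condition~(\ref{edgeList}), glued along a path of auxiliary signal vertices, together with a frozen copy of a suitable subgraph of $F$; along this ladder the colors must propagate consistently between the two ends. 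Here $K_4$-freeness of $F$ (condition~(\ref{noK4})) plays the role that $K_{2,3}$-freeness played in Section~\ref{quadSection}, namely, it forces the two vertices on a common triangle to have exactly two common neighbors within that triangle, so that any configuration with $\psi(x_1) = \psi(x_2) = 1$ would require two simultaneous consistent rotations around a cyclic neighborhood, which is impossible.

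For the not-all-zero gadget $Z(z_1, z_2, z_3, z_4)$, I would follow the blueprint of Figure~\ref{nazFig} essentially verbatim. Attach each $z_i$ to an auxiliary vertex $w_i$ via the appropriate not-both-one-type sub-gadget, pair $(w_1, w_2)$ and $(w_3, w_4)$ into hubs $x_{1,2}$ and $x_{3,4}$, and connect each hub to a copy of $F$ (coloured by the identity). The listable triangles $\{0, 2, 3\}$ and $\{0, 3, 4\}$ from condition~(\ref{triList}) supply the three discrete states of each hub, playing the roles of the $\alpha_{i,j}$ from the quadrangulation proof. Finally, link the two hubs through auxiliary vertices $y_{1,2}$ and $y_{3,4}$ via a not-both-one gadget. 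If all four $z_i$ were coloured $0$, then tracing the constraints through the copies of $F$ (using $K_4$-freeness to pin down the common neighbours) would force both $y_{1,2}$ and $y_{3,4}$ to $0$, contradicting the final not-both-one constraint.

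The principal obstacle is the construction of the not-both-one gadget. In the quadrangulation setting, Lemmas~\ref{inOut}, \ref{underlying} and~\ref{GammaConn} combine to show that $\Phi$ has balanced in- and out-degrees and a connected underlying graph, yielding directed paths between any two vertices via an Eulerian circuit. In the locally triangulated setting the analogous directed graph is only guaranteed to behave well locally around $0$ and each $\beta_i$, so establishing the existence of a single directed path implementing the required ``$(0,1) \to (1,0)$ transition'' will be the technical heart of the argument. Once such a path is identified, the remaining verification of the three clauses of Definition~\ref{patternDef} should follow the same lines as the proof of Lemma~\ref{pathImpliesGadget}, with listable-edge gadgets from condition~(\ref{edgeList}) replacing the stiffness-frozen copies of $H$.
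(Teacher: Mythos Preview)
Your high-level plan matches the paper exactly: apply Lemma~\ref{gadgetImpliesHard} after exhibiting reflexive not-both-one and not-all-zero gadgets for the adjacent pair $0,1$, with the not-both-one gadget coming from a directed path in the analogue of $\Phi$. Two points of divergence are worth noting.

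First, for the not-both-one gadget, the paper does not attempt any connectivity or Eulerian argument for $\Phi$; instead it writes down the required directed paths explicitly (Lemma~\ref{nboTri2}). The crucial observation, absent in the quadrangulation setting, is that in the reflexive definition of $\Phi$ an arc $(a,b)\to(c,d)$ allows $b=c$. This lets one rotate around the spanning cycle of $N_F(0)$ via steps like $(1,0)(2,3)(3,4)\cdots(k-1,k)(0,1)$, and similarly around each $\beta_i$ using condition~(\ref{betaCycle}), giving all the paths needed for Lemma~\ref{nboLemLocal}. So the ``technical heart'' you anticipate is dealt with by a direct construction rather than a structural lemma about $\Phi$.

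Second, and more substantively, your not-all-zero construction does not transfer. The blueprint of Figure~\ref{nazFig} depends essentially on $0$ having degree exactly three, which is what produces a natural three-state hub $\{\alpha_{1,2},\alpha_{2,3},\alpha_{3,1}\}$; in the locally triangulated setting $0$ has degree $k\geq 4$ and no such hub exists. The paper instead uses a different, linear construction (Figure~\ref{zPathFig}): four list-gadget vertices $y_1,y_2,y_3,y_4$ with lists $\{0,1\},\{0,2,3\},\{0,3,4\},\{0,5\}$ joined in a path, each $y_i$ also adjacent to a vertex $w_i$ constrained (via the gadgets of Lemma~\ref{nboLemLocal}) to equal $\beta_i$ whenever $z_i=0$. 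If all $z_i=0$, then $y_1$ is forced to $1$ and $y_4$ to $5$; $K_4$-freeness of $H[V(F)]$ then forces $y_2=2$ and $y_3=4$, contradicting the edge $y_2y_3$ since $2$ and $4$ are non-adjacent. The listable triangles of condition~(\ref{triList}) are used precisely as the lists for $y_2$ and $y_3$, not as hub states.
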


The aim in the rest of the section is to establish the following two lemmas which imply Theorem~\ref{localThm} via Lemma~\ref{gadgetImpliesHard}.

\begin{lem}
\label{nboLemLocal}
Let $H$ be a finite reflexive graph which is locally triangulated around a vertex $0$ and let $1,\dots,k$ and $\beta_1,\dots,\beta_k$ be as in Definition~\ref{localDef}. Then, for $1\leq i\leq k$, there exists
\begin{itemize}
\item a $\{(0,0),(0,1),(1,0)\}$-gadget,
\item a $\{(1,i),(1,\beta_i),(0,\beta_i)\}$-gadget and
\item a $\{(1,i+1),(1,\beta_i),(0,\beta_i)\}$-gadget.
\end{itemize}
\end{lem}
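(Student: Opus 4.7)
The plan is to realize each of the three gadgets by a common template: attach unary listable gadgets to each signal vertex to restrict its colour to a small set, then add a constant number of auxiliary vertices (also pinned by listable gadgets) joined to the signals by edges in such a way that the forbidden patterns admit no consistent completion while the allowed patterns each admit a canonical one.

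For the $\{(0,0),(0,1),(1,0)\}$-gadget, I will confine both signals $x_1$ and $x_2$ to $\{0,1\}$ via listable $\{0,1\}$-gadgets, which exist because $01 \in E(F)$ (condition~\eqref{edgeList}). To forbid $(1,1)$ I plan to introduce one or two auxiliary vertices adjacent to both signals and pin them using the listable triangular sets from~\eqref{triList}, i.e.\ $\{0,2,3\}$ or $\{0,3,4\}$. The essential combinatorial input here is that $F$ is $K_4$-free (condition~\eqref{noK4}) together with the cycle structure from~\eqref{nbCycle}: since $12\cdots k$ is a spanning cycle of $N_F(0)$ and $F$ is $K_4$-free, $3$ cannot be adjacent to $1$ in $F$ (otherwise $\{0,1,2,3\}$ would span $K_4$). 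This rules out enough common neighbours of $1$ and $1$ to prevent $(1,1)$ from extending to a valid colouring of the auxiliary structure.

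For the two triangle gadgets $\{(1,i),(1,\beta_i),(0,\beta_i)\}$ and $\{(1,i+1),(1,\beta_i),(0,\beta_i)\}$, I will restrict $x_1$ to $\{0,1\}$ and $x_2$ to $\{i,\beta_i\}$ respectively $\{i+1,\beta_i\}$, all listable by~\eqref{edgeList} since $01$, $i\beta_i$ and $(i+1)\beta_i$ are edges of $F$ (using~\eqref{beta}). The forbidden patterns are $(0,i)$ and $(0,i+1)$. To block these, I will add an auxiliary vertex adjacent to both signals and pin it via a listable gadget whose list is chosen so that, after intersecting with the neighbourhoods imposed by a candidate forbidden pattern, only colours incompatible with the cycle around $\beta_i$ from~\eqref{betaCycle} remain. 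The $K_4$-freeness of $F$ again makes this intersection tight.

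With the three gadgets in hand, I will verify conditions~(\ref{patterns})--(\ref{patternTransition}) of Definition~\ref{patternDef} in turn. Condition~(\ref{patterns}) is immediate once the canonical colourings are written down explicitly. Condition~(\ref{stuck}) reduces to a short case analysis using~\eqref{noK4} and the neighbourhood cycles from~\eqref{nbCycle}, \eqref{betaCycle}, and the unary listability of the signal lists (which ensures the signals cannot ``escape'' once inside their range). Condition~(\ref{patternTransition}) will be checked by writing explicit reconfiguration sequences in which each auxiliary vertex is walked along the relevant cycle from~\eqref{nbCycle} or~\eqref{betaCycle} in the correct order, leaving the signals fixed except at the coordinate being changed.

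The main obstacle will be simultaneously satisfying~(\ref{stuck}) and~(\ref{patternTransition}): the auxiliary structure must be rigid enough to block the forbidden patterns yet flexible enough to allow single-coordinate transitions between allowed ones. I expect this tension to be resolved by keeping the auxiliary structure minimal (one or two vertices) and by carefully choreographing the transition sequences, using the spanning cycles supplied by~\eqref{nbCycle} and~\eqref{betaCycle} as the natural ``tracks'' along which to move auxiliary colours.
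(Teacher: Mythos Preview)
Your plan has a concrete gap already in the first gadget. You propose to block $(1,1)$ by attaching one or two auxiliary vertices, adjacent to both signals, with lists $\{0,2,3\}$ or $\{0,3,4\}$. But $0$ lies in both of those lists and $0\sim 1$ (indeed $H$ is reflexive and $01\in E(F)$), so when $x_1=x_2=1$ every such auxiliary can simply take colour $0$; nothing is blocked. The non-adjacency $1\not\sim 3$ that you invoke only removes \emph{one} element from $N(1)\cap L$, not all of them. More generally, a single auxiliary $y$ adjacent to both signals can never work for this pattern set: allowing $(0,1)$ forces $L\cap N(0)\cap N(1)\neq\emptyset$, and anything in that intersection also lies in $L\cap N(1)\cap N(1)$, so $(1,1)$ extends. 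Two auxiliaries with the triangle lists fail for the same reason (both can sit at $0$).

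The paper's construction is not a local patch of this kind; it is a \emph{propagation} argument. One defines a digraph $\Phi$ on listable ordered edges $(a,b)$ with an arc $(a,b)\to(c,d)$ whenever $ac,bc,bd\in E(H)$ and $ad\notin E(H)$, and shows (the reflexive analogue of Lemma~\ref{pathImpliesGadget}) that a directed path $(c_1,d_1),\dots,(c_m,d_m)$ from $(a_1,a_0)$ to $(b_0,b_1)$ yields an $\{(a_0,b_0),(a_1,b_0),(a_0,b_1)\}$-gadget: the gadget is a reflexive path $y_1\cdots y_m$ with each $y_j$ pinned to the \emph{two}-element list $\{c_j,d_j\}$ via condition~\eqref{edgeList}, and the non-adjacency $c_jd_{j+1}\notin E(H)$ forces $y_{j+1}=c_{j+1}$ whenever $y_j=c_j$. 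For the not-both-one gadget one takes the path $(1,0)(2,3)(3,4)\cdots(k-1,k)(0,1)$ around the cycle of~\eqref{nbCycle}; this has $k-2$ internal vertices, not one or two, and uses edge lists rather than the triangle lists from~\eqref{triList}. For the $\beta_i$-gadgets the path additionally winds around the cycle in $N_F(\beta_i)$ supplied by~\eqref{betaCycle} (and, to reverse orientation, around the cycle of some $\beta_j$), so those conditions are doing essential work that your sketch does not capture. Your instinct to ``walk along the cycles'' in verifying~(\ref{patternTransition}) is pointing in the right direction, but the walk has to be built into the gadget itself, not just into the analysis.
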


\begin{lem}
\label{nazLemLocal}
If $H$ is a finite reflexive graph which is locally triangulated around a vertex $0$, then for every neighbour $1$ of $0$ with $0\neq 1$, there exists a not-all-zero gadget.
\end{lem}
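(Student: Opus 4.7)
The plan is to follow the template of the proof of Lemma~\ref{nazLem}, replacing the role of $K_{2,3}$-freeness with the $K_4$-freeness available in the locally triangulated setting, and replacing the final not-both-zero gluing with a not-both-one gluing (which is what Lemma~\ref{nboLemLocal} actually supplies). I will freely use the three families of gadgets from Lemma~\ref{nboLemLocal}, the listability hypotheses (edgeList) and (triList) of Definition~\ref{localDef}, and stiffness to freeze embedded copies of $H$.

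First, for each signal vertex $z_i$ I would attach a witness vertex $w_i$ using one of the $\{(1,j),(1,\beta_j),(0,\beta_j)\}$-gadgets from Lemma~\ref{nboLemLocal}, choosing the index $j$ so that $z_i=0$ pins $w_i$ to a distinguished $\beta$-vertex while $z_i=1$ leaves $w_i$ with two available images. Concretely, I would attach a $\{(1,2),(1,\beta_2),(0,\beta_2)\}$-gadget to $(z_1,w_1)$ and a $\{(1,3),(1,\beta_2),(0,\beta_2)\}$-gadget to $(z_2,w_2)$, so that when $z_1=z_2=0$ both witnesses are pinned to $\beta_2$; symmetrically I would use $\beta_3$ for the pair $(z_3,z_4)$. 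In each case, $z_i=1$ allows $w_i$ to take the additional value $j\in\{2,3,4,5\}$, providing the slack needed later for condition~(c) of Definition~\ref{patternDef}.

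Next, I introduce aggregator vertices $x_{1,2}$ and $x_{3,4}$, each adjacent to the corresponding pair of witnesses, and constrain $x_{1,2}$ to the listable triangle $\{0,2,3\}$ (using (triList)), and $x_{3,4}$ to $\{0,3,4\}$. The crucial structural fact is that in a reflexive $K_4$-free graph the set of common neighbours of a reflexive triangle $\{a,b,c\}$ is exactly $\{a,b,c\}$ itself, since any fourth common neighbour together with the triangle would form a $K_4$. This, together with the pinning of the $w_i$'s to $\beta$-vertices when the signals are all zero, lets me force $x_{1,2}$ and $x_{3,4}$ into a unique configuration in that case. I then attach intermediate vertices $y_{1,2}$ and $y_{3,4}$ adjacent to $x_{1,2},x_{3,4}$ respectively, together with frozen copies of $H$ in listable positions playing the analogue of $\alpha_{i,j}^*$ from the proof of Lemma~\ref{nazLem}, chosen so that the forced configuration at $x_{i,j}$ propagates to force $y_{i,j}$ to a common distinguished value (say $1$), while any $z_i=1$ leaves the chain enough slack to move $y_{i,j}$ to $0$.

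The final step is to attach a $\{(0,0),(0,1),(1,0)\}$-gadget from Lemma~\ref{nboLemLocal} to the pair $(y_{1,2},y_{3,4})$, which forbids the $(1,1)$-configuration and therefore the all-zero signal configuration. Conditions~(a) and~(c) of Definition~\ref{patternDef} are checked by chasing canonical colourings through each sub-gadget in turn and invoking condition~(c) on each sub-gadget to chain partial reconfiguration sequences, much as in the last paragraphs of the proof of Lemma~\ref{nazLem}. The principal obstacle is condition~(b) (stuck): one must verify that, for any $H$-colouring $\psi$ reconfiguring to some canonical $\zeta_p$, the combined constraints of stiffness, listability of $\{0,2,3\}$ and $\{0,3,4\}$, the three-pattern witness gadgets, and the $K_4$-free ``no fourth common neighbour of a triangle'' phenomenon uniquely force $\psi(y_{i,j})=1$ when $\psi(z_{2i-1})=\psi(z_{2j})=0$. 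If a particular triangle fails to give tight enough control in a specific $H$, one can reinforce the aggregator by adding additional edge-listable (from (edgeList)) constraints between the $w_i$, $x_{i,j}$ and $y_{i,j}$; condition (triList) guarantees that at least one of the triangles $\{0,2,3\}$, $\{0,3,4\}$ is available to serve this role.
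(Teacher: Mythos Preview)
There is a genuine gap in your argument at the step where you claim the aggregator and intermediate vertices force $y_{1,2}$ (and symmetrically $y_{3,4}$) to the value $1$ when the corresponding signals are zero. Concretely: when $z_1=z_2=0$, your witness gadgets pin \emph{both} $w_1$ and $w_2$ to the \emph{same} vertex $\beta_2$, so the adjacency of $x_{1,2}$ to $w_1$ and $w_2$ is a single constraint (``adjacent to $\beta_2$''), not two independent ones. Inside the list $\{0,2,3\}$ this forces only $x_{1,2}\in\{2,3\}$, not a unique value. From there, if $y_{1,2}$ is constrained to $\{0,1\}$ and made adjacent to $x_{1,2}$, the adversary can set $x_{1,2}=3$ and $y_{1,2}=0$ (since $13\notin E(H)$ by $K_4$-freeness), defeating your intended forcing to $1$. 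Your appeal to ``frozen copies of $H$ in listable positions playing the analogue of $\alpha_{i,j}^*$'' does not specify which vertices are frozen or how they interact, and natural choices (e.g.\ adjacency to a frozen $0^*$ or $4^*$) either add no constraint in the reflexive setting or push $y_{1,2}$ toward $0$ rather than $1$. Since Lemma~\ref{nboLemLocal} supplies only a not-both-one gadget and not a not-both-zero gadget, forcing to $0$ is not useful for your final gluing step, so the proof does not close as written. The closing hedge (``if a particular triangle fails to give tight enough control\dots'') confirms that this step has not actually been carried out.

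The paper's construction is structurally different and sidesteps this difficulty entirely. It assigns \emph{distinct} targets $\beta_1,\beta_2,\beta_3,\beta_4$ to $w_1,w_2,w_3,w_4$ (so no information is collapsed), places the four $y$-vertices on a \emph{path} $y_1y_2y_3y_4$ with lists $\{0,1\},\{0,2,3\},\{0,3,4\},\{0,5\}$, and makes each $y_i$ adjacent to $w_i$. When all signals are zero, the constraint $y_i\sim\beta_i$ together with the path edges forces $y_1=1$, then $y_2=2$ (since $13\notin E(H)$), and symmetrically $y_4=5$, then $y_3=4$ (since $35\notin E(H)$); the contradiction is then immediate from $24\notin E(H)$. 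No final gluing gadget is used at all, and the argument needs only the $K_4$-freeness of the induced subgraph on $V(F)$, exactly what Definition~\ref{localDef}\eqref{noK4} provides.
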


\subsection{Not-Both-One Gadget for Locally Triangulated Graphs}

In the rest of this section $H$, always denotes a finite reflexive graph which is locally triangulated around a vertex $0$. We define a directed graph analogously to Definition~\ref{PhiQuad}, guided by the graphs in Figure \ref{fig:nbo2} rather than in Figure \ref{fig:nbo}.  

\begin{defn}
Let $A$ be the set of all pairs $(x,y)\in V(H)^2$ such that $x\neq y$, $xy\in E(H)$ and $\{x,y\}$ is listable. 
\end{defn}

\begin{defn}
\label{PhiTri}  
Let $\Phi$ be a directed graph on vertex set $A$ where there is an arc from $(a,b)$ to $(c,d)$ if $ac,bc,bd\in E(H)$ and $ad\notin E(H)$. 
\end{defn}

\begin{figure}[htbp]
\begin{center}
\begin{tikzpicture}
   \newdimen\R
   \R=1.65cm
   \newdimen\smallR
   \smallR=1.0cm
   
   \draw\foreach [count=\n from 1] \x in {1,2,3,m} {
     (1+2*\n,3) node [smallblack, label={[label distance=0pt]90:{$y_{\x}$}}] (u\n){}
   };
   \draw (7.5,3) node (u31){};
   \draw (8.5,3) node (u39){};
   \draw(u1)--(u2)--(u3)--(u31);\draw(8,3) node (){$\cdots$};\draw(u39)--(u4);
   \draw\foreach [count=\n from 1] \x in {1,2,3,4,m} {
     (2*\n,2) node [smallblack, label={[label distance=0pt]-45:{$d_{\x}$}}] (d\n){}
     (2*\n,0) node [smallblack, label={[label distance=0pt]135:{$c_{\x}$}}] (c\n){}
     (c\n)--(d\n)
   };
   \foreach [count=\n from 1] \x in {2,3,4} {
     \draw(c\n)--(c\x);
     \draw(d\n)--(d\x);
     \draw(c\n)--(d\x); };
   \draw\foreach \x in {0,1,2} {
     (9,\x) node (){$\cdots$}
   };
   \draw (8.5,0) node (c42){};
   \draw (9.5,0) node (c48){};
   \draw (8.5,2) node (d42){};
   \draw (9.5,2) node (d48){};
   \draw (c4)--(c42); \draw (c48)--(c5);
   \draw (d4)--(d42); \draw (d48)--(d5);
   \draw(1,1) node (){$L$};
   \draw(2,4) node (){$P$};
\end{tikzpicture}
\end{center}

\caption{Graphs $P$ and $L$ motivating $\Phi$ of Defintion \ref{PhiTri}.}
\label{fig:nbo2}
\end{figure}
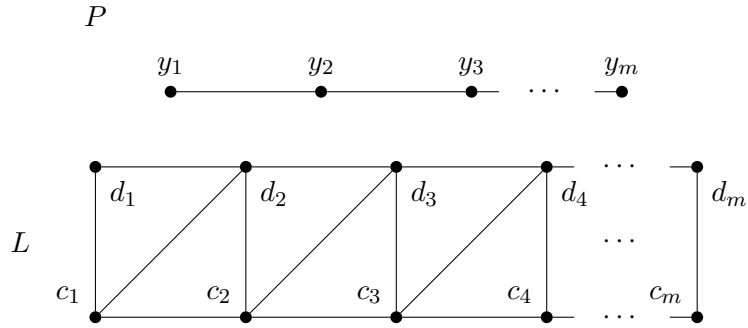

We remark that an important difference between the digraph $\Phi$ defined in this section and the one for irreflexive quadrangulations in Section~\ref{quadSection} is that, for given an arc $(a,b)(c,d)$, the vertices $b$ and $c$ may actually coincide. This is the key property which allows us to get away with local arguments in this section. 

\begin{obs}
\label{reverseTri}
There is an arc from $(a,b)$ to $(c,d)$ in $\Phi$ if and only if there is an arc from $(d,c)$ to $(b,a)$. 
\end{obs}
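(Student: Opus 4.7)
The plan is to prove the observation by a direct unpacking of Definition~\ref{PhiTri} together with the symmetry of the defining conditions of $A$. This really is a short symbolic check rather than a proof with any combinatorial content, so the strategy is simply to record the equivalences carefully.

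First I would note that the vertex set $A$ is closed under swapping coordinates: since the conditions $x\neq y$, $xy\in E(H)$, and ``$\{x,y\}$ is listable'' are all symmetric in $x$ and $y$, we have $(x,y)\in A$ if and only if $(y,x)\in A$. In particular, $(a,b),(c,d)\in A$ if and only if $(d,c),(b,a)\in A$, so both sides of the ``if and only if'' in the statement refer to arcs between vertices that actually belong to $\Phi$.

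Next I would write out the defining condition for each direction of the equivalence. By Definition~\ref{PhiTri}, there is an arc from $(a,b)$ to $(c,d)$ in $\Phi$ exactly when
\[ac\in E(H),\ bc\in E(H),\ bd\in E(H),\ ad\notin E(H).\]
Applying the same definition to the pair $(d,c)\to(b,a)$ (i.e.\ substituting $(a',b',c',d')=(d,c,b,a)$ into the conditions $a'c',b'c',b'd'\in E(H)$ and $a'd'\notin E(H)$) yields
\[db\in E(H),\ cb\in E(H),\ ca\in E(H),\ da\notin E(H).\]
Since $H$ is an undirected graph, these two lists of four conditions are literally the same, so the two arcs exist under exactly the same circumstances.

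There is no real obstacle here; the observation is essentially a tautology once the definitions are lined up, and its role in the paper is just to record a symmetry that will presumably be invoked later (e.g.\ to derive an ``in-degree equals out-degree'' statement for $\Phi$, mirroring Lemma~\ref{inOut} in the quadrangulation setting). The only thing to be a little careful about is the ``listable'' requirement in the definition of $A$, but as noted above this is manifestly symmetric, so nothing extra is needed.
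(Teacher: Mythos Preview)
Your proposal is correct and matches the paper's approach: the paper states this as an observation without proof, precisely because it follows immediately from the symmetry of the edge/non-edge conditions in Definition~\ref{PhiTri} once you substitute $(d,c,b,a)$ for $(a,b,c,d)$. Your careful unpacking, including the remark that listability is symmetric so $A$ is closed under coordinate swaps, is exactly the verification the paper leaves to the reader.
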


The following lemma is proved in a way which is analogous to the proof of Lemma~\ref{pathImpliesGadget}. We omit the  details. 

\begin{lem}
\label{pathGadgetTri}
If there is a directed path from $(a_1,a_0)$ to $(b_0,b_1)$ in $\Phi$, then there exists a reflexive $\left\{(a_0,b_0),(a_1,b_0),(a_0,b_1)\right\}$-gadget. 
\end{lem}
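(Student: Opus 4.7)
My plan is to mimic the construction from Lemma~\ref{pathImpliesGadget}, but to replace the single identity-pinned copy of $H^*$ (which used stiffness to constrain each coordinate) with a sequence of local gadgets, one per step of the directed path, supplied by listability. Specifically, given a directed path $(c_1,d_1),\ldots,(c_m,d_m)$ in $\Phi$ with $(c_1,d_1)=(a_1,a_0)$ and $(c_m,d_m)=(b_0,b_1)$, I would use the fact that every $(c_i,d_i)\in A$ implies $\{c_i,d_i\}$ is listable to take a reflexive $\{c_i,d_i\}$-gadget $X_i(y_i)$ with signal vertex $y_i$. I then glue these gadgets together into $Y$ by adding reflexive edges $y_iy_{i+1}$ for $1\le i\le m-1$, and designate signal vertices $x_1:=y_1$ and $x_2:=y_m$. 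The three canonical colourings are defined exactly as in Lemma~\ref{pathImpliesGadget}: $\zeta_{(a_0,b_0)}$ chooses $y_1=d_1$ and $y_i=c_i$ for $i\ge 2$; $\zeta_{(a_1,b_0)}$ chooses $y_i=c_i$ for all $i$; $\zeta_{(a_0,b_1)}$ chooses $y_i=d_i$ for all $i$. The arc conditions in Definition~\ref{PhiTri} guarantee that each of $c_ic_{i+1}$, $d_ic_{i+1}$, $d_id_{i+1}$ is in $E(H)$ while $c_id_{i+1}$ is not, which is exactly what is needed for these three colourings to be valid homomorphisms of $Y$ into $H$, establishing part (a) of Definition~\ref{patternDef}.

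To verify part (b), I would argue that if $\psi\in\Hom(Y,H)$ reconfigures to some $\zeta_p$, then for each $i$ the restriction of $\psi$ to $X_i$ reconfigures to a canonical colouring of $X_i$, forcing $\psi(y_i)\in\{c_i,d_i\}$. So $\psi(y_1)\in\{a_0,a_1\}$ and $\psi(y_m)\in\{b_0,b_1\}$, and the only pattern I must rule out is $(a_1,b_1)$; but $\psi(y_1)=a_1=c_1$ together with $c_1d_2\notin E(H)$ forces $\psi(y_2)=c_2$, and an induction along the path gives $\psi(y_m)=c_m=b_0\ne b_1$, a contradiction. For part (c), the transition from $\zeta_{(a_0,b_0)}$ to $\zeta_{(a_1,b_0)}$ requires reconfiguring only $X_1$, keeping $y_1$ in $\{c_1,d_1\}$ while $y_2$ remains fixed at $c_2$; both $c_1c_2$ and $d_1c_2$ are in $E(H)$, so the edge $y_1y_2$ stays valid. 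The transition from $\zeta_{(a_0,b_0)}$ to $\zeta_{(a_0,b_1)}$ is carried out by reconfiguring $X_2,X_3,\ldots,X_m$ in order; during the reconfiguration of $X_i$, the value of $y_{i-1}$ is fixed at $d_{i-1}$ (or $d_1=a_0$ if $i=2$) and, when $i<m$, the value of $y_{i+1}$ is fixed at $c_{i+1}$, and the four relevant edges $d_{i-1}c_i$, $d_{i-1}d_i$, $c_ic_{i+1}$, $d_ic_{i+1}$ all belong to $E(H)$ by the arcs of $\Phi$, preserving adjacency throughout.

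The main obstacle is verifying part (c) carefully: unlike the irreflexive setting, a change of colour at a signal vertex $y_i$ does not happen in a single step but unfolds through an internal reconfiguration of $X_i$ during which $y_i$ wanders through $\{c_i,d_i\}$, and one must simultaneously preserve the homomorphism property on the two attaching edges $y_{i-1}y_i$ and $y_iy_{i+1}$. The definition of an arc in $\Phi$ is tailored precisely so that three of the four $\{c_i,d_i\}\times\{c_{i+1},d_{i+1}\}$ pairs sit inside $E(H)$, which is both what gives the canonical colourings, what blocks the forbidden pattern, and what keeps the intermediate colourings legal; once this is noted, the remainder of the verification is essentially bookkeeping, and I would omit routine details just as the authors do in the statement of the lemma.
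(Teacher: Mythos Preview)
Your proposal is correct and is precisely the natural adaptation of the proof of Lemma~\ref{pathImpliesGadget} that the paper has in mind when it says the argument is ``analogous'' and omits the details. The only substantive change from the irreflexive case is replacing the role of the frozen copy $H^*$ together with $K_{2,3}$-freeness (which pinned each $y_i$ to $\{c_i,d_i\}$) by the listability gadgets $X_i(y_i)$ guaranteed by the very definition of $A$ in Section~\ref{triSection}; your verification of conditions~\eqref{patterns}--\eqref{patternTransition} using the arc relations of $\Phi$ is exactly as in Lemma~\ref{pathImpliesGadget}.
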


Thus, Lemma~\ref{nboLemLocal} is implied by the following lemma, via Lemma~\ref{pathGadgetTri}. 

\begin{lem}
\label{nboTri2}
For $1\leq i\leq k$, there is a path from $(a_1,a_0)$ to $(b_0,b_1)$ in $\Phi$ for the following choices of $a_0,a_1,b_0,b_1$
\begin{enumerate}[(i)]
\item\label{11} $a_0=b_0=0$ and $a_1=b_1=1$,
\item\label{0i} $a_0=1$, $a_1=0$, $b_0=\beta_i$ and $b_1=i$ and
\item\label{0i+1} $a_0=1$, $a_1=0$, $b_0=\beta_i$ and $b_1=i+1$.
\end{enumerate}
\end{lem}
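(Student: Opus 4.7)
The plan is to construct explicit directed paths in $\Phi$ for each of (i), (ii), (iii), using the spanning cycles in $F$ around $0$ and each $\beta_j$, together with $K_4$-freeness of the induced subgraph $H[V(F)]$. Three preliminaries drive the proof: (1) because $H$ is reflexive, setting $c = b$ in the arc condition yields \emph{sliding moves} $(a,b) \to (b,d)$ valid whenever $d \in N(b) \setminus N(a)$, $b \ne d$, and $\{b,d\}$ is listable; (2) $K_4$-freeness of $H[V(F)]$ forces $k \geq 4$ (else $\{0,1,2,3\}$ would be a $K_4$) and, more generally, any two neighbors of $x \in \{0\} \cup \{\beta_j\}$ at cyclic distance at least $2$ on the spanning cycle around $x$ are non-adjacent in $H$; in particular, $\beta_j \notin N(0)$ for every $j$ and $1 \cdot 3 \notin E(H)$; (3) every edge of $F$ yields a listable pair (condition~\eqref{edgeList}), so every pair used lies in $A$.

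For case (ii) with $i = 1$, let $1, 2, v_1, \ldots, v_m$ denote the spanning cycle around $\beta_1$ in $F$ (with $v_m$ adjacent to $1$). I will verify that
\[
(0,1) \to (2, \beta_1) \to (v_1, v_2) \to (v_2, v_3) \to \cdots \to (v_{m-1}, v_m) \to (\beta_1, 1)
\]
is a valid directed path: the first arc uses common neighbor $2$ together with $\beta_1 \in N(1) \setminus N(0)$; the second uses $v_1$ as a common neighbor of $2, \beta_1$ together with $2 \cdot v_2 \notin E$ from $K_4$-freeness on $\{2, v_1, v_2, \beta_1\}$; each intermediate sliding move uses $v_j \cdot v_{j+2} \notin E$ from $K_4$-freeness on $\{v_j, v_{j+1}, v_{j+2}, \beta_1\}$; and the last arc uses $v_{m-1} \cdot 1 \notin E$ from $K_4$-freeness on $\{1, v_{m-1}, v_m, \beta_1\}$. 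For (iii) with $i = 1$, the final arc is replaced by $(v_{m-1}, v_m) \to (v_m, 1) \to (\beta_1, 2)$, verified analogously. For general $i$, I will first transport $(0,1)$ to a pair incident to $\beta_i$ via arcs $(0, j) \to (j+1, \beta_j)$ (valid since $\beta_j \notin N(0)$) chained with short cycle-around-$\beta_j$ walks, then finish with the cycle-around-$\beta_i$ walk of the $i = 1$ form.

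For case (i), an analogous path uses the cycle $2, 3, u_1, \ldots, u_p$ around $\beta_2$ in $F$ (with $u_p$ adjacent to $2$):
\[
(1,0) \to (0,3) \to (3, \beta_2) \to (u_1, u_2) \to \cdots \to (u_{p-1}, u_p) \to (u_p, 2) \to (\beta_2, 3) \to (3, 0) \to (0, 1).
\]
Here the first and last arcs use $1 \cdot 3 \notin E$ (together with loops at $0$); the second uses $\beta_2 \in N(3) \setminus N(0)$; the cycle walk through $\beta_2$ mirrors the case-(ii) argument; the turning move $(u_p, 2) \to (\beta_2, 3)$ uses $u_p \cdot 3 \notin E$ from $K_4$-freeness on $\{2, 3, u_p, \beta_2\}$; and $(\beta_2, 3) \to (3, 0)$ uses $\beta_2 \notin N(0)$.

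The main obstacle I anticipate is the general-$i$ transport step in (ii) and (iii), together with degenerate sub-cases where the spanning cycle around some $\beta_j$ has length exactly three (so that $m = 1$ or $p = 1$ and the intermediate cycle walks collapse), which will require custom shorter paths that may need to invoke the listability of $\{0,2,3\}$ and $\{0,3,4\}$ from condition~\eqref{triList}. All non-adjacencies used in the construction nevertheless reduce to $K_4$-freeness applied to explicit four-vertex subsets of $V(F)$, making the individual verifications routine, albeit intricate to organize uniformly across all $i$.
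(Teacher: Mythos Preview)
Your preliminaries and your explicit paths for case~(i) and for cases~(ii),~(iii) with $i=1$ are all correct; the sliding moves and the $K_4$-freeness applications are exactly the right mechanisms, and these are the same tools the paper uses. Two of your anticipated difficulties, however, evaporate. The degenerate cases $m=1$ or $p=1$ cannot occur: if the spanning cycle around $\beta_j$ had length~$3$, its three vertices together with $\beta_j$ would form a $K_4$ inside $H[V(F)]$, so every such cycle has length at least~$4$. And condition~\eqref{triList} is never needed in this lemma; only edge-listability~\eqref{edgeList} is used.

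The genuine gap is the general-$i$ transport step for (ii) and (iii). Your proposed chaining through successive $\beta_j$'s is not made precise, and it is not clear how a ``short cycle-around-$\beta_j$ walk'' takes you from $(j+1,\beta_j)$ to a pair of the form $(0,j+1)$ so that the pattern can repeat; the natural walks around $\beta_j$ tend to bring you back to $(j,0)$ or $(1,0)$ rather than advancing the index. The paper avoids this entirely by observing that the spanning cycle around~$0$ already does all the work. For case~(i) the path is simply
\[
(1,0)\to(2,3)\to(3,4)\to\cdots\to(k-1,k)\to(0,1),
\]
with the first and last arcs checked directly and each intermediate step a sliding move using $(j-1)(j+1)\notin E(H)$. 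For case~(ii), one first runs your $i=1$ construction (plus one more slide $(\beta_1,1)\to(1,0)$, valid since $0\notin N(\beta_1)$) to reach $(1,0)$, then walks along the cycle around~$0$:
\[
(1,0)\to(2,3)\to(3,4)\to\cdots\to(i-2,i-1)\to(0,i)\to(i,\beta_i),
\]
and finally applies the cycle-around-$\beta_i$ walk exactly as in your $i=1$ argument to reach $(\beta_i,i)$. Case~(iii) is the same with the final step adjusted. So your detour through $\beta_2$ in case~(i) is correct but unnecessary, and the missing idea for general~$i$ is simply to use the cycle around~$0$ itself rather than chaining through the $\beta_j$'s.
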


\begin{proof}
Since the subgraph of $H$ induced by $V(F)$ is $K_4$-free, we know that vertex $i$ is not adjacent to $i+2$ for $1\leq i\leq k$. Therefore, for \eqref{11}, we simply observe that the following is a directed path in $\Phi$:
\[(1,0)(2,3)(3,4)\cdots (k-1,k)(0,1).\]
For $1\leq j\leq k$, label the neighbours of $\beta_j$ by $x_1^j,\dots,x_{t_j}^j$ so that $x_1^j=j$ and $x_1^j\cdots x_{t_j}^j$ is a cycle. Again, since the subgraph of $H$ induced by $V(F)$ is $K_4$-free, we know that $x_{\ell}^j$ and $x_{\ell+2}^j$ are non-adjacent. Also, $0$ and $\beta_j$ are non-adjacent. Thus, the following is a directed path in $\Phi$:
\[(0,1)(2,\beta_1)(x_{2}^1,x_3^1)\cdots (x_{t_1-1}^1,x_{t_1}^1)(\beta_1,2)(1,0).\]
We follow this by the directed path
\[(1,0)(2,3)(3,4)\cdots (i-2,i-1)(0,i)(i,\beta_i).\]
We complete the proof of \eqref{0i} by continuing along the following path
\[(i,\beta_i)(x_2^i,x_3^i)\cdots (x_{t_i-1}^i,x_{t_i}^i) (\beta_i,i).\]
The argument used to prove \eqref{0i} applies \emph{mutatis mutandis} to prove \eqref{0i+1}. 
\end{proof}

\subsection{Not-All-Zero Gadget for Locally Triangulated Graphs}

Our next goal is to prove Lemma~\ref{nazLemLocal}, which, when combined with Lemma~\ref{nboLemLocal} \eqref{11}, will complete the proof of Theorem~\ref{localThm} via Lemma~\ref{gadgetImpliesHard}. 

\begin{proof}[Proof of Lemma~\ref{nazLemLocal}]
Our goal is to construct a not-all-zero gadget $Z(z_1,z_2,z_3,z_4)$. Recall that the neighbours of the vertex $0$ in $H$ are labelled $1,\dots,k$ for some $k \geq 4$ where consecutive neighbours modulo $k$ are adjacent. In what follows, we will often refer to vertex $5$ which, if $k=4$, is regarded as the same as vertex $1$.

As a first step, we apply Lemmas~\ref{nboLemLocal} and~\ref{pathGadgetTri} to disjointly add 
\begin{itemize}
\item a reflexive $\{(1,\beta_i),(0,\beta_i),(1,i)\}$-gadget $W_i(z_i,w_i)$ for $1\leq i\leq 3$ and
\item a reflexive $\{(1,\beta_4),(0,\beta_4),(1,5)\}$-gadget $W_4(z_4,w_4)$.
\end{itemize}
Then, disjointly from the construction so far and from one another, use conditions \eqref{edgeList} and \eqref{triList} of Definition~\ref{localDef} to add 
\begin{itemize}
\item a reflexive $\{0,1\}$-gadget with signal vertex $y_1$,
\item a reflexive$\{0,2,3\}$-gadget with signal vertex $y_2$,
\item a reflexive $\{0,3,4\}$-gadget with signal vertex $y_3$ and
\item a reflexive $\{0,5\}$-gadget with signal vertex $y_4$.
\end{itemize}
Finally, add an edge from $y_i$ to $w_i$ for $1\leq i\leq 4$ and add edges $y_1y_2$, $y_2y_3$ and $y_3y_4$. See Figure~\ref{zPathFig}. 

\begin{figure}[htbp]
\begin{tikzpicture}[scale=1.3]
\draw (0,0) node [smallblack, label={[label distance=27pt]90:{$y_1$}}] (z1) {};
\draw (0,0) node [label={[label distance=7pt]90:{$\{0,1\}$}}] (L1) {};
\draw (2,0) node [smallblack, label={[label distance=27pt]90:{$y_2$}}] (z2) {};
\draw (2,0) node [label={[label distance=7pt]90:{$\{0,2,3\}$}}] (L2) {};
\draw (4,0) node [smallblack, label={[label distance=27pt]90:{$y_3$}}] (z3) {};
\draw (4,0) node [label={[label distance=7pt]90:{$\{0,3,4\}$}}] (L3) {};
\draw (6,0) node [smallblack, label={[label distance=27pt]90:{$y_4$}}] (z4) {};
\draw (6,0) node [label={[label distance=7pt]90:{$\{0,5\}$}}] (L4) {};

\draw (z1) -- (z2);
\draw (z3) -- (z2);
\draw (z3) -- (z4);

\path (z1) edge[ out=69, in=111
                , loop
                , distance=0.4cm]
            node[above=3pt] {} (z1);

\path (z2) edge[ out=69, in=111
                , loop
                , distance=0.4cm]
            node[above=3pt] {} (z2);

\path (z3) edge[ out=69, in=111
                , loop
                , distance=0.4cm]
            node[above=3pt] {} (z3);

\path (z4) edge[ out=69, in=111
                , loop
                , distance=0.4cm]
            node[above=3pt] {} (z4);

\begin{scope}[shift={(0,-1)}]

\draw (0,0) node [smallblack, label={[label distance=0pt]0:{$w_1$}}] (y1) {};
\draw (0,0) node [label={[label distance=7pt]180:{$\{\beta_1,1\}$}}] (L1) {};
\draw (2,0) node [smallblack, label={[label distance=0pt]0:{$w_2$}}] (y2) {};
\draw (2,0) node [label={[label distance=7pt]180:{$\{\beta_2,2\}$}}] (L2) {};
\draw (4,0) node [smallblack, label={[label distance=0pt]0:{$w_3$}}] (y3) {};
\draw (4,0) node [label={[label distance=7pt]180:{$\{\beta_3,3\}$}}] (L3) {};
\draw (6,0) node [smallblack, label={[label distance=0pt]0:{$w_4$}}] (y4) {};
\draw (6,0) node [label={[label distance=7pt]180:{$\{\beta_4,5\}$}}] (L4) {};

\draw (y1) -- (z1);
\draw (y2) -- (z2);
\draw (y3) -- (z3);
\draw (y4) -- (z4);

\path (y1) edge[ out=69+90, in=111+90
                , loop
                , distance=0.4cm]
            node[above=3pt] {} (y1);

\path (y2) edge[ out=69+90, in=111+90
                , loop
                , distance=0.4cm]
            node[above=3pt] {} (y2);

\path (y3) edge[ out=69+90, in=111+90
                , loop
                , distance=0.4cm]
            node[above=3pt] {} (y3);

\path (y4) edge[ out=69+90, in=111+90
                , loop
                , distance=0.4cm]
            node[above=3pt] {} (y4);

\end{scope}

\begin{scope}[shift={(0,-2)}]

\draw (0,0) node [smallblack, label={[label distance=27pt]270:{$z_1$}}] (x1) {};
\draw (0,0) node [label={[label distance=7pt]270:{$\{0,1\}$}}] (L1) {};
\draw (2,0) node [smallblack, label={[label distance=27pt]270:{$z_2$}}] (x2) {};
\draw (2,0) node [label={[label distance=7pt]270:{$\{0,1\}$}}] (L2) {};
\draw (4,0) node [smallblack, label={[label distance=27pt]270:{$z_3$}}] (x3) {};
\draw (4,0) node [label={[label distance=7pt]270:{$\{0,1\}$}}] (L3) {};
\draw (6,0) node [smallblack, label={[label distance=27pt]270:{$z_4$}}] (x4) {};
\draw (6,0) node [label={[label distance=7pt]270:{$\{0,1\}$}}] (L4) {};

\draw[dashed] (x1) -- (y1);
\draw[dashed] (x2) -- (y2);
\draw[dashed] (x3) -- (y3);
\draw[dashed] (x4) -- (y4);

\path (x1) edge[ out=69+180, in=111+180
                , loop
                , distance=0.4cm]
            node[above=3pt] {} (x1);

\path (x2) edge[ out=69+180, in=111+180
                , loop
                , distance=0.4cm]
            node[above=3pt] {} (x2);

\path (x3) edge[ out=69+180, in=111+180
                , loop
                , distance=0.4cm]
            node[above=3pt] {} (x3);

\path (x4) edge[ out=69+180, in=111+180
                , loop
                , distance=0.4cm]
            node[above=3pt] {} (x4);

\end{scope}
\end{tikzpicture}

\caption{The vertices $z_i,w_i$ and $y_i$ for $1\leq i\leq 4$ with their lists. Solid lines represent edges of $Z$ and dashed lines represent gadgets which force $w_i$ to map to $\beta_i$ when $z_i$ maps to $0$.}
\label{zPathFig}
\end{figure}
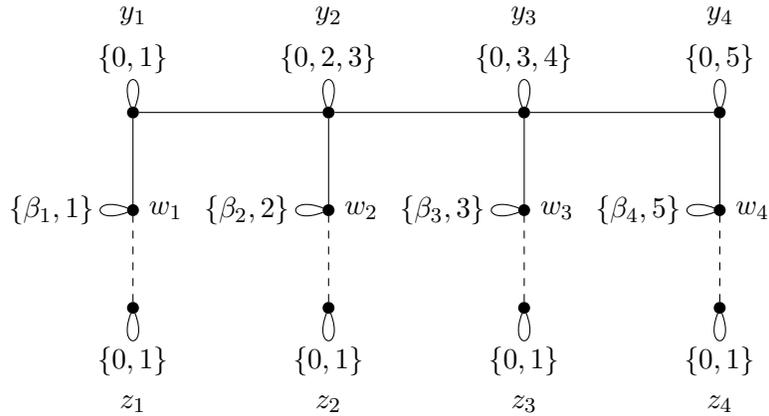

Now, for $p=(p_1,p_2,p_3,p_4)\in\{0,1\}^4\setminus\{(0,0,0,0)\}$, we define $\zeta_p$ as follows. If $p_i=1$ for $1\leq i\leq 3$, then  we colour $W_i(z_i,w_i)$ with $\zeta_{(1,i)}$ and $y_i$ with $0$. Similarly, if $p_4=1$, colour  $W_4(z_4,w_4)$ with $\zeta_{(1,5)}$ and $y_4$ with $0$. On the other hand, if $p_i=0$, then we colour $W_i(z_i,w_i)$ with $\zeta_{(0,\beta_i)}$. If there exists $j>i$ with $p_j=1$, then we colour $y_i$ with the smallest non-zero colour in its list and, otherwise, colour it with the largest such colour.

Let us now show that if $\psi$ reconfigures to $\zeta_p$ for some $p\in\{0,1\}^4\setminus\{(0,0,0,0)\}$, then at least one of $\psi(z_1),\dots,\psi(z_4)$ is one. The gadgets $W_1,\dots,W_4$ imply that, if $\psi(z_1)=0$, then $\psi(w_1)=\beta_1$ which implies $\psi(y_1)=1$ and if $\psi(z_2)=0$, then $\psi(y_2)\in\{2,3\}$ which means that it must be equal to $2$ because $13\notin E(H)$ since $V(F)$ induces a $K_4$-free subgraph of $H$. By the same argument, $\psi(y_4)$ must be $5$ and $\psi(y_3)$ must be $4$, which is a contradiction because $24\notin E(H)$ since $V(F)$ induces a $K_4$-free subgraph of $H$.

Now, suppose that $p,q\in\{0,1\}^4\setminus\{(0,0,0,0)\}$ differ on exactly one coordinate. Without loss of generality, there is some $i$ such that $p_i=0$ and $q_i=1$. Let $j$ be any coordinate such that $p_j=q_j=1$. Starting with $\zeta_p$, we can reconfigure the colouring on the gadget $W_i(z_i,w_i)$ so that $z_i$ maps to $1$ and $w_i$ does not map to $\beta_i$. We can then change the colour of $y_i$ to $0$. After doing this, we change the colours on the vertices $y_{i'}$ for $i'$ between $j$ and $i$ so that they match their colours under $\zeta_q$ (and we can do that without modifying the colourings on the non-signal vertices of the gadgets  $W_{i'}(z_{i'},w_{i'})$). This completes the proof.
\end{proof}

\begin{ack}
We would like to thank an anonymous referee who pointed out that our proof of Theorem~\ref{ppCor}, which we originally only proved for odd wheels, holds for all non-bipartite quadrangulations of the projective plane.  The second author would also like to thank Nima Hoda for several enlightening discussions on topics related to those covered in this paper.
\end{ack}

\bibliographystyle{plain}
\bibliography{sphere}
\end{document}